\pgfplotsset{compat=newest}
\title{Interactions between Universal Composition Operators and Complex Dynamics}
\date{\today}
\author{Vasiliki Evdoridou}
\address{School of Mathematics and Statistics, The Open University, Walton Hall, Milton Keynes MK7 6AA, UK}
\email{vasiliki.evdoridou@open.ac.uk}
\author{Clifford Gilmore} 
\address{Laboratoire de Mathématiques Blaise Pascal UMR 6620, Université Clermont Auvergne, Campus universitaire des Cézeaux, 3 place Vasarely, 63178 Aubière Cedex, France}                                                              
\email{clifford.gilmore@uca.fr} 
\author{Myrto Manolaki}
\address{School of Mathematics and Statistics, University College Dublin, Belfield, Dublin 4, Ireland}
\email{myrto.manolaki@ucd.ie}
\keywords{Universality, composition operators, complex dynamics, linear dynamics, weighted composition operators}
\subjclass[2020]{30K20, 30D05, 37F10, 47B33}
\thanks{C.~Gilmore was supported by the European Union’s Horizon Europe research and innovation programme under the Marie~Skłodowska-Curie grant agreement No.\ 101066064. M.~Manolaki was supported by the NUI Grant Scheme for Early Career Academics.  This project was partially conducted during a visit by V.~Evdoridou and M.~Manolaki to UCA and they wish to thank the members of LMBP for their hospitality and mathematical stimulation. The authors acknowledge support from the International Centre for Mathematical Sciences via the Research-in-Groups programme.}
\numberwithin{equation}{section}
\newtheorem{thm}{Theorem}[section]
\newtheorem{prop}[thm]{Proposition}
\newtheorem{ap}[thm]{Application}
\newtheorem {lemma}[thm]{Lemma}
\newtheorem {cor}[thm] {Corollary}
\newtheorem{thmA}{Theorem}
\theoremstyle{definition}
\newtheorem{defn}[thm]{Definition}
\newtheorem{eg}[thm]{Example}
\newtheorem{rmk}[thm]{Remark}
\newcommand{\C}{\mathbb{C}}
\newcommand{\N}{\mathbb{N}}
\newcommand{\D}{\mathbb{D}}  
\newcommand{\abs}[1]{\left| #1 \right|}
\newcommand{\norm}[1]{{\left\|#1\right\|}}
\newcommand{\spn}[1]{\mathrm{span}\!\left\lbrace #1 \right\rbrace}
\newcommand{\comp}{C_f}
\newcommand{\compfU}{C_{f^n,\, U}}
\newcommand{\wcomp}{W_{\omega,f}}
\begin{document}

\begin{abstract}
This paper is concerned with universality properties of composition operators $\comp$, where the symbol $f$ is given by a transcendental entire function restricted to parts of its Fatou set. We determine universality of $\comp$ when $f$ is restricted to (subsets of) Baker and wandering domains. We then describe the behaviour of universal vectors, under the action of iterates of the symbol $f$, near periodic points of $f$ or near infinity. Finally, we establish a principal universality theorem for the more general class of weighted composition operators, which we then apply to uncover universality results in the context of various types of Fatou components of the associated symbol.
\end{abstract}

\maketitle

\section{Introduction}

The genesis of this study is the investigation of the universality of composition operators $C_f \colon g \mapsto g \circ f$, acting on spaces of holomorphic functions, where the symbol $f$ is given by an entire function restricted to a part of its Fatou set $F(f)$. In other words, the underlying thread here is the existence of (an abundant supply of) holomorphic functions $g$ such that the sequence of maps $\left( g \circ f^n \right)_{n \in \N}$ forms a dense set in a target space. This study thus lies at the interface of complex dynamics and operator theory.

The area of complex dynamics is primarily concerned with understanding the evolution of the iterates of holomorphic functions. It was initiated roughly a century ago from the seminal contributions of P.~Fatou and G.~Julia, and in recent years it has experienced a considerable surge in research activity. 
A source of significant research interest is the behaviour of iterates of transcendental entire functions, and remarkable results on this topic have been revealed by applying tools from diverse mathematical fields including topology, approximation theory and quasiconformal mappings (cf., for instance, \cite{bishopwd}, \cite{EGP}, \cite{RRRS}). A comprehensive introduction to  transcendental dynamics can be found in Bergweiler~\cite{bergweiler93}.

On the other hand, the investigation of composition operators acting on spaces of analytic functions has been an active branch of operator theory since the pioneering work of Nordgren~\cite{Nor68} and Kamowitz~\cite{Kam75, Kam78}. The crux of the topic is the interplay between the function theoretic properties of the symbol $f$ and the operator theoretic properties of $\comp$. A rich literature has subsequently developed, revealing many striking results in myriad settings (cf., \cite{Sha87},  \cite{CM95}, \cite{Sha93}). 
 While composition operators are an interesting area of research in their own right, they have also garnered interest via their surprising connection to the invariant subspace problem \cite{NRW87} (cf.\ also \cite{CG17}).
Tangentially, the linear dynamical properties of composition operators have also been investigated in \cite{BS90}, \cite{BS97}, \cite{BGMR95}, \cite{GEM09}, \cite{Bes13} and \cite{CM23}.

Combining these fields, Jung~\cite{Jun19} initiated the investigation of the dynamical behaviour of composition operators $\comp$, with respect to appropriate symbols $f$, when $f$ is restricted to specific components of its Fatou set $F(f)$. He tackled this topic from a linear dynamical perspective by proving that $\comp$ possesses a particular type of universality. 
As a consequence, in \cite{Jun19} it was revealed that there exists a large set, in the sense of Baire category,  of holomorphic functions $g$ that elicit the following surprising phenomenon: the sequences of iterates $(f^n)$, normally well behaved on the Fatou set $F(f)$, display chaotic behaviour when composed with $g$.

However, the full picture is incomplete since there are important types of Fatou components that were not treated in \cite{Jun19}; in particular there remain open questions regarding certain Baker and wandering domains. The first main objective of this article is therefore to uncover universality properties of composition operators with respect to these less tractable Fatou components. We achieve this by employing tools from complex dynamics, which includes exploiting key properties of Baker domains and the recent classification of wandering domains.

To this end, in Section \ref{sec:BakerWandering} we show for $f$ restricted to a subset of any of its Baker domains, that there exists a large set, in the sense of Baire category, of functions that are universal for the composition operator $\comp$. We then turn our attention to wandering domains, where we study  universality of $\comp$ with respect to univalent oscillating wandering domains of $f$. Motivated by the recent classification of simply connected wandering domains \cite{BEFRS}, and by employing techniques from approximation theory, we construct examples of various types of wandering domains that allow us to determine the existence of universal functions. In the case of contracting wandering domains we also show that $\comp$ may display both behaviours (i.e.\ universality on a set or the complete absence of universality). We conclude our study of the different types of Fatou components by showing, in the context of multiply connected wandering domains, that $\comp$ does not admit universal functions.

In recent years a growing literature has emerged studying the range of various types of universal functions (cf., for example, \cite{CS04}, \cite{GK14} and \cite{CMM23}).
Our second main objective in this article is thus to investigate the range of functions $g$, universal with respect to $\comp$, near the fixed points of the symbol $f$. In Section \ref{sec:boundary} we provide necessary conditions (involving the range) for a holomorphic function $g$ to be universal for $C_{f}$. More specifically, we show if $g$ is universal for $C_{f}$, and $z_0$ is a fixed point of $f$, then $g$, restricted to certain ``small" regions $U$ with $z_0\in\partial U$, has full range; i.e., $g$ assumes on $U$ any complex value. The geometry of the regions $U$ depends on the nature of the fixed point $z_0$ (parabolic, attracting or superattracting). We also consider the corresponding cases where $z_0$ is infinity. Although one would expect that such universal functions would have a maximal Julia set, we show that there exists a transcendental entire function $g$, universal for $C_{f}$, whose Julia set is not the entire complex plane.

 The third main objective of this article is concerned with the universality of weighted composition operators $\wcomp \colon g \mapsto \omega \cdot (g\circ f)$, acting on spaces of holomorphic functions, where the symbol $f$ is again given by a holomorphic function restricted to a part of its Fatou set $F(f)$ and the weight $\omega$ is a non-vanishing  holomorphic function.
Weighted composition operators are a natural generalisation of the family of composition operators, which have been studied from a linear dynamical perspective in various settings, for instance in \cite{YR07},  \cite{Rez11}, \cite{Bes14} and \cite{CG21}. In Section \ref{sec:wcomp} we begin by proving a principal universality result for $\wcomp$, which we subsequently employ to systematically identify the existence of an abundant supply of universal functions for $\wcomp$ with respect to various types of Fatou components.

Finally, in Section \ref{sec:furtherRemarks} we appraise how our universality results relate to iterates of holomorphic self-maps of the unit disc, as well as considering the behaviour of generic (in the sense of Baire category) holomorphic functions  near the boundary of their domain of definition. This illustrates applications of our results to complex analysis.

\section{Preliminaries} \label{sec:Prelims}

In this section we present the background and notation required throughout this article. 
In subsection \ref{subsec:LD} we begin by recalling some basic notions from linear dynamics and universality. In subsection \ref{subsec:UnivAndCompOpers} we introduce composition operators and the general setting where they display universality behaviour. Then in subsection \ref{subsec:holodyn} we recall the pertinent background from the area of complex dynamics. 

For  $w \in \mathbb{C}$ and $r>0$, we set $D(w,r) \coloneqq \{z: \abs{z-w}<r\}$ and as standard the unit disc is denoted by $\mathbb{D} = D(0,1)$. For brevity, we will write $\mathbb{D}_r = D(0,r)$. Finally, we let $d_U(z,w)$ denote the hyperbolic distance between $z, w \in U$ with respect to a domain $U$.

\subsection{Linear Dynamics} \label{subsec:LD}

 Let $X,$ $Y$ be metric spaces and let $T_n \colon X \to Y$, $n \in \N$, be a family of continuous maps. We say an element $x \in X$ is \emph{universal} for the family $(T_n)_{n \in \N}$ if its orbit $\{ T_n(x) : n \in \N \}$ is dense in $Y$. When such a universal element exists, the family $(T_n)_{n \in \N}$ is also said to be universal.

Another useful property is  topological transitivity. We say the family of maps $(T_n)_{n \in \N}$ is \emph{topologically transitive} if for any pair $U \subset X$ and $V \subset Y$ of nonempty open sets, there exists some $N \geq 0$ such that $T_N(U) \cap V \neq \varnothing$. 

Since it may not always be obvious how to identify a particular universal element, the following Birkhoff transitivity type theorem, known in the literature as a Universality Criterion, is a practical tool that uses topological transitivity to give the existence of an abundant supply of universal elements. The following formulation of the criterion is taken from the monograph \cite[Theorem 1.57]{GEP11} and we refer the interested reader to the authoritative survey by Grosse-Erdmann~\cite{Gro99} for a profound treatment of universality.

\begin{thmA}[Universality Criterion] \label{thm:UnivCriterion}
Let $X$ be a complete metric space, $Y$ be a separable metric space and $T_n \colon X \to Y$, $n\in \N$, be continuous maps. Then the following are equivalent.
\begin{enumerate}[\normalfont (i)]
    \item $(T_n)_{n \in \N}$ is topologically transitive.
    
    \item There exists a dense set of universal elements $x \in X$; that is, there exists a dense set of elements $x$ whose orbits $\{ T_n(x) : n \in \N \}$ are dense in $Y$. 
\end{enumerate}
If one of these conditions holds then the set of points in $X$ with dense orbit  is a dense $G_\delta$ set.
\end{thmA}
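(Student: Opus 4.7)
The plan is to prove the equivalence in the standard Birkhoff transitivity manner, with the $G_\delta$ statement falling out as a byproduct of one direction. For the easy implication (ii) $\Rightarrow$ (i), I would argue directly: given any nonempty open sets $U\subset X$ and $V\subset Y$, density of the universal elements supplies some $x\in U$ whose orbit is dense in $Y$, so there exists $N\in\N$ with $T_N(x)\in V$, whence $T_N(U)\cap V\neq\varnothing$.

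For the substantive implication (i) $\Rightarrow$ (ii) I would exploit the separability of $Y$ to fix a countable base $(V_k)_{k\in\N}$ of its topology. A point $x\in X$ has orbit dense in $Y$ precisely when, for every $k$, the orbit hits $V_k$; consequently the set of universal elements can be rewritten as
$$
\mathcal{U} \coloneqq \bigcap_{k\in\N}\bigcup_{n\in\N}T_n^{-1}(V_k).
$$
Continuity of each $T_n$ ensures that every set $G_k\coloneqq\bigcup_{n}T_n^{-1}(V_k)$ is open in $X$, so $\mathcal{U}$ is automatically a $G_\delta$. The key step is to upgrade each $G_k$ to a dense open set. For this, let $U\subset X$ be any nonempty open set: topological transitivity yields some $N\in\N$ with $T_N(U)\cap V_k\neq\varnothing$, equivalently $U\cap T_N^{-1}(V_k)\neq\varnothing$, which is exactly the density claim for $G_k$.

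Finally I would invoke the completeness of $X$: as a complete metric space it is a Baire space, and the Baire category theorem then guarantees that the countable intersection $\mathcal{U}$ of dense open sets is itself dense. This yields simultaneously the existence of a dense set of universal elements (giving (ii)) and the final $G_\delta$ assertion of the theorem.

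The principal obstacle here is essentially bookkeeping: the analytic content is entirely packaged inside the Baire category theorem, and the only genuinely creative ingredient is the use of the countable base supplied by separability of $Y$ to reduce the qualitative property ``dense orbit'' to a countable intersection of open conditions. No estimates, explicit constructions, or complex-dynamical input are needed, since this is a purely topological criterion.
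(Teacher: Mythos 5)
Your argument is correct and is exactly the standard Birkhoff-type proof: the easy direction from density of universal elements, and the substantive direction by writing the universal elements as $\bigcap_k \bigcup_n T_n^{-1}(V_k)$ over a countable base of $Y$ (taking the basic sets nonempty), using transitivity for density of each open layer and the Baire category theorem in the complete space $X$. The paper does not prove this statement but quotes it from Grosse-Erdmann and Peris \cite[Theorem 1.57]{GEP11}, whose proof is essentially the same argument, so there is nothing to add.
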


We recall that a subset of a Baire space $X$ is said to be \emph{comeagre} in $X$ if it contains a dense $G_\delta$ set of $X$.

A well-studied special case of universality is known as hypercyclicity, where the family $(T_n)_{n \in \N}$ of continuous maps is generated via the iteration of a single operator $T \colon X \to X$, i.e.\ $T_n = T^n$, where $T^n$ denotes $n$-fold iteration of $T$. More precisely, for a separable topological vector space $X$, we say the continuous linear operator $T \colon X \to X$ is \emph{hypercyclic} if there exists $x \in X$ such that its $T$-orbit is dense in $X$, i.e. 
\begin{equation*}
    \overline{\{ T^n x : n \in \N \}} = X.
\end{equation*}
Such a vector $x$ is called a \emph{hypercyclic vector} for $T$. 

The vector $x \in X$ is said to be \emph{supercyclic} for $T$ if its projective orbit
\begin{equation*}
    \{ \zeta T^n x : \zeta \in \C,\, n \in \N \}
\end{equation*}
is dense in $X$, and $x \in X$ is \emph{cyclic} for $T$ if the linear span of its orbit 
\begin{equation*}
    \spn{T^n x : n \in \N} 
\end{equation*}
is dense in $X$. An operator that admits a supercyclic (respectively cyclic) vector is called \emph{supercyclic} (respectively \emph{cyclic}).
Comprehensive introductions to the field of linear dynamics  can be found in the monographs by Bayart and Matheron~\cite{BM09}, and Grosse-Erdmann and Peris~\cite{GEP11}, and a survey that collects some of the more recent advances in the area can be found in \cite{Gil20}.

\subsection{Composition Operators and Universality} \label{subsec:UnivAndCompOpers}

We next recall the relevant notions to describe our setting. For a domain $\Omega \subset \C$, we endow the space $H(\Omega)$ of holomorphic functions on $\Omega$ with the topology of local uniform convergence. To describe this topology via seminorms we require an \emph{exhaustion} of $\Omega$ by compact sets, i.e.\ an increasing sequence of compact sets $K_n \subset \Omega$ such that each compact $K \subset \Omega$ is contained in some $K_n$. This topology is thus induced by the family of seminorms $\norm{f}_{K_n} = \sup_{z \in K_n} \abs{f(z)}$, where $f \in H(\Omega)$ and $(K_n)_{n \geq 1}$ is an exhaustion of compact sets of $\Omega$. 

A basis for this topology is given by
\begin{equation*}
	\{ V_{\varepsilon,K,\Omega}(g) \,:\, \varepsilon >0,\, K \subset \Omega \textnormal{ compact},\, g \in H(\Omega) \},
\end{equation*}
where the sets $V_{\varepsilon,K,\Omega}(g)$ are defined as
\begin{equation*}
    V_{\varepsilon,K,\Omega}(g) \coloneqq \{ h \in H(\Omega) : \norm{h - g}_K < \varepsilon \}.
\end{equation*}
Note that this topology is independent of the chosen exhaustion.

For a compact subset $K$ of $\Omega$, we will sometimes consider the setting of the  space $A(K)$ of complex-valued continuous functions on $K$ that are homomorphic on the interior $K^\circ$. The space $(A(K), \norm{\, \cdot \,}_K)$ is a Banach space when endowed with the uniform norm $\norm{\, \cdot \,}_K$.

For an open subset $U \subset \C$, we recall that the \emph{holomorphically convex hull} of a compact subset $K \subset U$ with respect to $U$ is defined as
\begin{equation*}
	\widehat{K}_U \coloneqq \{ z \in U : \abs{f(z)} \leq \norm{f}_K \textnormal{ for all } f \in H(U) \}
\end{equation*}
and we say that $K$ is $U$-convex if $K = \widehat{K}_U$.
If $U = \C$ we let $\widehat{K} \coloneqq \widehat{K}_\C$ denote the \emph{polynomially convex hull} of $K$ and we say that $K$ is \textit{polynomially convex} if $K=\widehat{K}$. It is well-known that a compact subset $K$ of $\mathbb{C}$ is polynomially convex if and only if it has no holes (i.e. if its complement in $\mathbb{C}\cup\{\infty\}$ is connected). The same characterisation holds if we replace polynomially convex by $\Omega$-convex for any open set $\Omega$ containing $K$ which has no holes. 

In order to enhance the clarity of some proofs in the sequel, we record the following lemma.

\begin{lemma} \label{lma:Subsets}
	Let $\Omega \subset \C$ be a nonempty open set.  Assume that $\mathcal{V} \subset H(\Omega)$ is a nonempty open set and $g \in \mathcal{V}$. Then there exist $\varepsilon >0$ and an  $\Omega$-convex  compact subset $K \subset \Omega$ such that 
		\begin{equation*}
		V_{\varepsilon,K,\Omega}(g)  \subset \mathcal{V}.
	\end{equation*}
\end{lemma}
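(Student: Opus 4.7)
The plan is to start from the fact that the sets $V_{\varepsilon,K,\Omega}(g)$, with no restriction on the shape of $K$, form a basis of the topology of $H(\Omega)$ at $g$. Thus, since $\mathcal{V}$ is open and contains $g$, one can immediately find some $\varepsilon>0$ and a compact set $K_0 \subset \Omega$ (not necessarily $\Omega$-convex) such that
\[
V_{\varepsilon, K_0, \Omega}(g) \subset \mathcal{V}.
\]
The task then reduces to replacing $K_0$ by an $\Omega$-convex compact superset $K \subset \Omega$ without enlarging the neighbourhood in a harmful way. The key observation is monotonicity: if $K_0 \subset K$ then $\|h-g\|_{K_0} \le \|h-g\|_{K}$, so
\[
V_{\varepsilon, K, \Omega}(g) \subset V_{\varepsilon, K_0, \Omega}(g) \subset \mathcal{V},
\]
and we keep the same $\varepsilon$.

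So the core of the argument is to produce an $\Omega$-convex compact $K$ with $K_0 \subset K \subset \Omega$. The natural candidate is the holomorphically convex hull itself, $K \coloneqq \widehat{(K_0)}_\Omega$. I would then verify the two needed properties. First, $K$ is compact: since $\Omega \subset \C$ is open, the standard description of the $\Omega$-hull in the plane yields $\widehat{(K_0)}_\Omega = K_0 \cup \bigcup_{j} U_j$, where the $U_j$ range over those connected components of $\Omega \setminus K_0$ that are relatively compact in $\Omega$. This union is bounded (as $K_0$ is bounded and each $U_j$ lies in a bounded component of $\C\setminus K_0$), and a standard argument shows it is closed in $\C$, hence compact. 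Second, $K$ is $\Omega$-convex: the idempotence $\widehat{\widehat{(K_0)}_\Omega}_\Omega = \widehat{(K_0)}_\Omega$ is immediate from the defining inequality $|f(z)|\le \|f\|_{K_0}$ for $f \in H(\Omega)$, because any point in $\widehat{\widehat{(K_0)}_\Omega}_\Omega$ satisfies $|f(z)|\le \|f\|_{\widehat{(K_0)}_\Omega}\le \|f\|_{K_0}$.

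Putting these pieces together, the pair $(\varepsilon, K)$ with $K = \widehat{(K_0)}_\Omega$ satisfies the conclusion. The only step that really requires care is the compactness of the hull $\widehat{(K_0)}_\Omega$ inside $\Omega$, which is the main obstacle; once the plane-topological description of the hull (filling in those complementary components of $K_0$ that do not reach $\partial \Omega$) is in hand, everything else is bookkeeping. This is also the place where the hypothesis $\Omega \subset \C$ enters crucially, since the statement would fail in higher-dimensional settings without further assumptions (such as pseudoconvexity of $\Omega$).
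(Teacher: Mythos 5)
Your proposal is correct and follows essentially the same route as the paper: pick a basic neighbourhood $V_{\varepsilon,K_0,\Omega}(g)\subset\mathcal{V}$ and replace $K_0$ by its hull $\widehat{(K_0)}_\Omega$, which is $\Omega$-convex and compact (a standard fact for open $\Omega\subset\C$, which you verify in detail). The only cosmetic difference is that the paper notes the stronger equality $V_{\varepsilon,\widehat{(K_0)}_\Omega,\Omega}(g)=V_{\varepsilon,K_0,\Omega}(g)$, coming from the defining inequality of the hull applied to $h-g$, whereas you use only the monotonicity inclusion, which suffices.
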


\begin{proof}
	The standard compact exhaustion of $\Omega$ gives that we can find $\varepsilon > 0$ and a nonempty compact subset $K_1 \subset \Omega$ such that $V_{\varepsilon,K_1,\Omega}(g) \subset \mathcal{V}$.
	Defining $K \coloneqq \widehat{(K_1)}_\Omega$, we have that $K$ is $\Omega$-convex with
	\begin{equation*}
		V_{\varepsilon,K,\Omega}(g) = V_{\varepsilon,K_1,\Omega}(g) \subset \mathcal{V}.
	\end{equation*}
	
\end{proof}

In the context of two given domains $\Omega$ and $G$ and a sequence $(f_n)$ of holomorphic functions from $\Omega$ to $G$, a function $g \in H(G)$ is said to be \emph{universal} for $(f_n)$ if the set $\{C_{f_{n}}(g): n  \in \N \}=\{g \circ f_n : n \in \N \}$ is dense in $H(\Omega)$. If such a function $g$ exists, then we also say that the sequence $(f_n)$ is universal.

Early investigations into the universality of composition operators were conducted by Luh~\cite{Luh93}, and Bernal Gonz\'{a}lez and Montes-Rodr\'{i}guez~\cite{BGMR95}. 
In \cite{BGMR95} the authors introduced the following definition,  which turns out to be quite useful in the case when the domains are the same (i.e. $\Omega = G$):
a sequence of self-maps $(f_n)$ of a domain $\Omega$ is said to be \emph{run-away} if for every compact set $K \subset \Omega$, there exists $N\in \N$ such that $f_N(K) \cap K = \varnothing$. 

The universality of a sequence of composition operators, for symbols acting on a simply connected domain, was characterised by Grosse-Erdmann and Mortini~\cite{GEM09} in the below theorem. 

\begin{thmA}[Grosse-Erdmann and Mortini \cite{GEM09}] \label{thm:GEM09}
	Let $(f_n)$ be a sequence of holomorphic self-maps of a simply connected domain $\Omega \subset \C$. Then the sequence of composition operators $(C_{f_n})$ is universal on $H(\Omega)$ if and only if there exists a subsequence $(f_{n_j})$ of $(f_n)$ such that for each compact subset $K$ of $\Omega$, there exists $j_0 \in \N$ such that the restriction $f_{n_j}|_K \colon K \to \Omega$ is injective and $(f_{n_j})$ is run-away for all $j \geq j_0$.
\end{thmA}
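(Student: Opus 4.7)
The plan is to prove both directions via the Universality Criterion (Theorem \ref{thm:UnivCriterion}), pairing Runge-type approximation for sufficiency with the Hurwitz theorem for necessity.

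For sufficiency, assume the subsequence property holds. To establish topological transitivity of $(C_{f_n})$, take nonempty open sets $\mathcal{U}, \mathcal{V} \subset H(\Omega)$, pick $u \in \mathcal{U}$ and $v \in \mathcal{V}$, and use Lemma \ref{lma:Subsets} (applied to each pair, then passing to the $\Omega$-convex hull of the union of the two resulting compacts together with the smaller tolerance) to find an $\Omega$-convex compact $K \subset \Omega$ and $\varepsilon > 0$ with $V_{\varepsilon, K, \Omega}(u) \subset \mathcal{U}$ and $V_{\varepsilon, K, \Omega}(v) \subset \mathcal{V}$. For $j$ large, $L_j \coloneqq f_{n_j}(K)$ is disjoint from $K$ and $f_{n_j}|_K$ is a holomorphic injection, so $\psi_j \coloneqq (f_{n_j}|_K)^{-1} \colon L_j \to K$ is continuous on $L_j$ and holomorphic on $L_j^\circ$. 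Define $w \coloneqq u$ on $K$ and $w \coloneqq v \circ \psi_j$ on $L_j$. Provided $K \cup L_j$ is $\Omega$-convex, Runge's theorem yields $g \in H(\Omega)$ with $\norm{g - w}_{K \cup L_j} < \varepsilon$, whence $g \in \mathcal{U}$ and $g \circ f_{n_j} \in \mathcal{V}$. The main technical obstacle lies precisely here: although simple connectedness of $\Omega$ ensures every $\Omega$-convex compact is polynomially convex, the disjoint union $K \cup L_j$ may enclose a relatively compact component of its complement in $\Omega$. This is resolved by passing to a further subsequence along which every component of $\Omega \setminus (K \cup L_j)$ reaches $\partial \Omega$, or equivalently by replacing $K \cup L_j$ with its $\Omega$-convex hull and extending $w$ holomorphically across the added material; both steps rely essentially on the simply connected geometry.

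For necessity, fix a universal function $g \in H(\Omega)$ (provided by Theorem \ref{thm:UnivCriterion}) and an exhaustion $(K_m)$ of $\Omega$ by $\Omega$-convex compacts with nonempty interior. Construct $(n_j)$ inductively. At stage $j$, choose $c_j \in \C$ with $|c_j|$ so large that the translate $h_j(z) \coloneqq z + c_j$, viewed as an element of $H(\Omega)$, satisfies $\mathrm{dist}(h_j(K_j), g(K_j)) > 0$; then pick $\eta_j > 0$ less than half this distance and small enough that any holomorphic function on a neighborhood of $K_j$ within $\eta_j$ of $h_j$ in uniform norm is injective on $K_j$ (a quantitative Hurwitz consequence via Cauchy estimates on $K_j^\circ$). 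By density of the orbit of $g$, select $n_j > n_{j-1}$ with $\norm{g \circ f_{n_j} - h_j}_{K_j} < \eta_j$. For any fixed compact $K$ with $K \subset K_m$ and every $j \geq m$: closeness to $h_j$ forces injectivity of $g \circ f_{n_j}$, and hence of $f_{n_j}$, on $K$; meanwhile $g(f_{n_j}(K_j))$ lies within $\eta_j$ of $h_j(K_j)$ and is therefore disjoint from $g(K_j)$, whence $f_{n_j}(K_j) \cap K_j = \varnothing$, and in particular $f_{n_j}(K) \cap K = \varnothing$.
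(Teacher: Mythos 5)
The paper does not actually prove this statement -- it is quoted from \cite{GEM09} -- so your argument can only be judged on its own terms. Your overall strategy (transitivity plus Runge-type approximation on $K\cup L_j$ for sufficiency; approximating a far-away translation to force injectivity and run-away behaviour for necessity) is the right one, but both halves contain genuine defects. In the sufficiency direction, the step you flag as ``the main technical obstacle'' is misdiagnosed and your proposed remedies do not work: passing to a further subsequence cannot help, since $K$ and $L_j$ are produced inside the transitivity argument after $\mathcal{U},\mathcal{V}$ are given, while the subsequence is fixed beforehand; and one cannot ``extend $w$ holomorphically across the added material'' -- a function holomorphic near a compact set does not in general extend across a filled hole. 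The correct observation is that no hole can arise at all: $L_j=f_{n_j}(K)$ is a homeomorphic image of the hole-free compact $K$, hence hole-free, and the union of two \emph{disjoint} compact sets without holes has no holes (Janiszewski's theorem, or Mayer--Vietoris for \v{C}ech cohomology); this is precisely the fact invoked in the proof of Theorem \ref{thm:wcompUniv}. A further imprecision: since $f_{n_j}$ is only assumed injective on $K$ (it may have critical points on $K$), $\psi_j$ is merely continuous on $L_j$ and holomorphic on $L_j^{\circ}$, so Runge's theorem does not apply to $w$; either invoke Mergelyan, or first apply the hypothesis to a slightly larger compact $K'$ with $K\subset (K')^{\circ}$, which makes $\psi_j$ holomorphic on a neighbourhood of $L_j$.

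In the necessity direction, your ``quantitative Hurwitz'' claim is false as stated: for every $\eta>0$ there exist functions within $\eta$ of $h_j$ in the uniform norm on $K_j$ that are not injective on $K_j$. For instance, on $K_j=\overline{\mathbb{D}}$ the function $F(z)=z+c_j+\eta z^{N}$ with $N>1/\eta$ satisfies $\norm{F-h_j}_{\overline{\mathbb{D}}}=\eta$ but has a critical point in $\mathbb{D}$, near which it is two-to-one. Uniform closeness on $K_j$ controls derivatives (via Cauchy estimates) only on compacts lying in $K_j^{\circ}$ at definite distance from $\partial K_j$, so you can only conclude injectivity of $g\circ f_{n_j}$ on a compact strictly inside the set where the approximation holds. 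The standard repair is to take an exhaustion with $K_m\subset K_{m+1}^{\circ}$ and to choose $\eta_j$ small compared with $\operatorname{dist}(K_{j-1},\partial K_j)$, so that $\norm{g\circ f_{n_j}-h_j}_{K_j}<\eta_j$ forces injectivity on $K_{j-1}$; then for any compact $K\subset K_m$ and all $j\geq m+1$ you get injectivity on $K$, and your disjointness argument (which is correct) gives $f_{n_j}(K)\cap K=\varnothing$. Finally, to select $n_j>n_{j-1}$ you need the orbit with finitely many terms removed to remain dense; this holds because $H(\Omega)$ has no isolated points, but it should be said.
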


Further information on the linear dynamical properties of composition operators can be found in \cite[Chapter 4]{GEP11}.

We note that Charpentier and Mouze~\cite{CM23} recently characterised all sequences $(f_n)$ of eventually injective holomorphic functions, between two domains (not necessarily simply connected or equal), for which universal vectors exist. 

In the sequel we fix a domain $\Omega \subset \C$, an open subset $D \subset \Omega$ and a holomorphic function $f \colon D \to D$. The $n$-fold composition of $f$ with itself is denoted by
\begin{equation*}
	f^n = \underbrace{f \circ f \circ \cdots \circ 
		f}_{n\textrm{-times}},
\end{equation*}
where $f^0$ denotes the identity. 

In \cite{Jun19}, the study of universality properties of composition operators was conducted from the perspective of the following definition.
As usual, $C(M)$ denotes the space of continuous functions on a subset $M \subset \C$ and we recall that $M$ is said to be $\sigma$-compact if it is the union of countably many compact subsets of $M$.

\begin{defn} \label{defn:universality}

	Let $\Omega$, $D$ and $f$ be as above. 
Let $M \subset D$ be locally compact and $\sigma$-compact  and let $\mathcal{F} \subset C(M)$ be a family of functions. A function $g \in H(\Omega)$ is called $\Omega$-$\mathcal{F}$-\emph{universal} for $C_f$ if
	\begin{equation*}
		\mathcal{F} \subset \overline{\{ g \circ f^n|_M : n \in \N \}}^{C(M)}.
	\end{equation*}
The composition operator $C_f$ is said to be $\Omega$-$\mathcal{F}$-\emph{universal} if there exists an $\Omega$-$\mathcal{F}$-universal function for $C_f$. For brevity we will refer to $\mathcal{F}$-universality when the domain $\Omega$ is clear from the context.
\end{defn}
The motivation for the above modification of the usual definition of universality is twofold. Firstly, it allows us to consider universal elements that are holomorphic on sets that are larger than $D$. On the other hand, for suitable $M \subset D$, it affords us the freedom to approximate, via subsequences of $(g \circ f^n|_M)_{n \in \N}$, functions in classes that are larger than the space $H(D)$.

Note if $\{ g\circ f^n|_{M} : n\in\mathbb{N}\} \subset \mathcal{F}$, then
$g\in H(\Omega)$ is $\mathcal{F}$-universal for $C_f$ if and only if the set $\{ g\circ f^n|_{M} : n\in\mathbb{N}\}$  is dense in $\mathcal{F}$. In most cases covered in this paper, $\mathcal{F}$ will be a family of holomorphic functions on a certain open subset of the complex plane.

\subsection{Complex Dynamics} \label{subsec:holodyn}

We now present the pertinent background from complex dynamics necessary for our study. A comprehensive introduction to the iteration of transcendental functions can be found in~\cite{bergweiler93}.

For a transcendental entire function $f$, the \emph{Fatou set} $F(f)$ of $f$  is defined to be the set of points in $\mathbb{C}$ on a neighbourhood of which the iterates of $f$  form a normal family.  Recall that a family of holomorphic functions on a domain $D$ is normal if every sequence of functions has a subsequence that converges locally uniformly on $D$ to a holomorphic function or to infinity. The complement of $F(f)$ in the complex plane, denoted by $J(f)$, is called the \emph{Julia set} of $f$.

We say $z_0 \in \mathbb{C}$ is a \emph{periodic point} of $f$ if $f^n(z_0) = z_0$ for some $n \geq 1$. The smallest $n$ with this property is called the period, and in the case $n=1$ we say $z_0$ is a \emph{fixed point} of $f$. A periodic point is called attracting, indifferent,
or repelling if, respectively, the modulus of its multiplier $\abs{(f^n)'(z_0)}$ is less than, equal to, or
greater than 1. Periodic points of multiplier 0 are called superattracting. If $(f^n)'(z_0)= e^{2\pi i \alpha}$ and $\alpha$ is rational then $z_0$ is called  a parabolic periodic point.

The connected components of the Fatou set are called Fatou components, and they can be periodic, preperiodic or wandering domains. Note that $f(F(f)) \subset F(f)$, and, in particular, every Fatou component maps into a Fatou component. We say a Fatou component is periodic if it maps into itself under some iterate of $f$, preperiodic if it maps into a periodic cycle under some iterate of $f$, and a wandering domain otherwise. A periodic Fatou component $U \subset F(f)$ can be characterised as of one of the following types.

\begin{enumerate}[label=(\alph*)]
\item $U$ contains an attracting periodic point $z_0$ of period $p$. Then $f^{np}(z) \to z_0$
for $z \in U$ as $n \to \infty$, and $U$ is called the \emph{immediate attracting basin} of $z_0$.

\item The boundary $\partial U$ contains a periodic point $z_0$ of period $p$ and $f^{np}(z) \to z_0$ for $z \in U$ as $n \to \infty$. Then $(f^p)'
(z_0) = 1$. In this case, $U$ is called a \emph{parabolic basin}.

\item $f|_U$ is conformally conjugate to an irrational rotation of the unit disc. In this case, $U$ is called a \emph{Siegel disc}.
	\item $f^n(z) \to \infty$ in $U$. Then $U$ is a \emph{Baker domain}.
\end{enumerate}
 Note that in the above cases the convergence to $z_0$ or to infinity is local uniform convergence. Of particular interest in this article will be the behaviour of $f$ near a (super-)attracting or parabolic periodic point; for a detailed description of this behaviour cf.~\cite{Milnor}.  For domains $U, V \subset \mathbb{C}$ and $f \colon U \to V$ we say that  $f|_U$ has degree $d$, where $d\geq 2$ or $d = \infty$, if $f$ maps $U$ $d$-to-one into $V$. (For $d=1$ the function $f|_{U}$ is of course injective.)

In the case of an attracting fixed point $z_0$ of $f$  (i.e.\ $\lambda = f'(z_0) $ such that $0 < \abs{\lambda} < 1$), it follows from K\oe{}nigs's linearization theorem that there exist open neighbourhoods $U$ of $z_0$, $V$ of $0$ and a conformal map $\varphi  \colon U\to V$ that conjugates $f|_U \colon U \to U$ with the map $F \colon V \to V$, where $F$ is given by the linear function $F(z) = \lambda z$. 
\begin{equation*}
	\xymatrix@C+1.5em{ 					
		U \ar[r]^{f} \ar[d]_\varphi & U \ar[d]^\varphi\\
		V \ar[r]^F & V
	}
\end{equation*}
It follows that $\varphi \circ f^{n}=\lambda ^n \cdot \varphi$ on $U$ for all $n\in\mathbb{N}$, which implies that $\varphi (z_0)=0$ and hence $f^n|_U \to z_0$ locally uniformly. 

In the case where the fixed point is superattracting, we can apply B\"ottcher's theorem to find open neighbourhoods $U$ of $z_0$ and $V$ of $0$ and a conformal map $\varphi \colon U\to V$ such that $\varphi (f^{n}(z))=(\varphi (z))^{p^{n}}$ for all $z$ in $U$ and $n\in\mathbb{N}$, where $p=\min \{ n\in\mathbb{N}: f^{(n)}(z_0)\neq 0\}$. A similar argument to that used in the case of attracting fixed points gives that $f^{n}|_U \to z_0$ locally uniformly. For any $r>0$ with $ \overline{\mathbb{D}}_r \subset V$, we consider the set $B_{r} \coloneqq \varphi^{-1}(\overline{\mathbb{D}}_r)$, which is a  compact neighbourhood  of $z_0$ that is contained in $U$. Then we have that $f^{n}(B_{r}^{\circ}\setminus \{z_0\})\subset B_{r}^{\circ}\setminus \{z_0\}$ for each $n\in \mathbb{N}$. 

Finally, in the case of a parabolic fixed point, there are $m$ attracting petals $P_1, P_2, \dotsc, P_m$, where $m=\min\{ n\in\mathbb{N}: f^{(n+1)}(z_0)\neq 0\}$. The petals are pairwise disjoint, simply connected domains with $f(P_k)\subset P_k$, $z_0 \in\partial P_k$ and $f^n|_{P_{k}}\to z_0$ locally uniformly such that, for each $k=1,2,\dotsc,m$, $f|_{P_k}$ is injective.

Recall  if $U$ is not periodic or preperiodic then it is called a \emph{wandering domain}. More specifically, if $f^n(U)\cap f^m(U) = \varnothing$ for all $n\neq m$, then $U$ is said to be a wandering domain. Throughout the paper we denote by $U_n$ the wandering domain containing $f^n(U)$, where $n \in \N$.
The wandering domains for which the only limit function is the point at infinity are called \emph{escaping}, while the rest are either  \emph{oscillating} (if infinity and some other finite value are limit functions) or \emph{dynamically bounded} (if all limit functions are points in the plane). A major open problem in transcendental dynamics is whether there exist dynamically bounded wandering domains. Wandering domains can be simply connected or multiply connected. Simply connected wandering domains have recently been  classified in \cite{BEFRS} in terms of hyperbolic distances between orbits of points and we recall this classification below. 

\begin{thmA}[Classification of simply connected wandering domains]\label{thm:Theorem A BEFRS}
	Let $U$ be a simply connected wandering domain of a transcendental entire function $f$ and let $U_n$ be the Fatou component containing $f^n(U)$, where $n \in \N$. Denoting by $E$ the countable set of pairs
	\[
	E=\{(z,z')\in U\times U : f^k(z)=f^k(z') \text{\ for some $k\in\N$}\},
	\]
	then exactly one of the following holds.
	\begin{enumerate}[\normalfont (i)]
		\item $d_{U_n}(f^n(z), f^n(z'))\to c(z,z')= 0 $ for all $z,z'\in U$, and we say that $U$ is {\textit{(hyperbolically) contracting}}. 
		
		\item $d_{U_n}(f^n(z), f^n(z'))\to c(z,z') >0$ and $d_{U_n}(f^n(z), f^n(z')) \neq c(z,z')$
		for all $(z,z')\in (U \times U) \setminus E$, $n \in \N$, and we say that $U$ is {\textit{  (hyperbolically) semi-contracting}}.
		
		\item there exists $N>0$ such that for all $n\geq N$, $d_{U_n}(f^n(z), f^n(z')) = c(z,z') >0$ for all $(z,z') \in (U \times U) \setminus E$, and we say that $U$ is {\textit{ (hyperbolically) eventually isometric}}.
	\end{enumerate}
\end{thmA}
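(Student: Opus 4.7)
The natural starting point is the Schwarz--Pick inequality applied step by step along the orbit. For each $n \in \N$ the map $f \colon U_n \to U_{n+1}$ is holomorphic between simply connected hyperbolic domains, so Schwarz--Pick gives $d_{U_{n+1}}(f(a),f(b)) \leq d_{U_n}(a,b)$ for all $a,b \in U_n$. Setting $a_n(z,z') \coloneqq d_{U_n}(f^n(z), f^n(z'))$, the sequence $(a_n)_{n\in\N}$ is non-increasing and bounded below by $0$, so the limit $c(z,z') \coloneqq \lim_{n\to\infty} a_n(z,z')$ exists in $[0,\infty)$. When $(z,z') \notin E$ the iterates $f^n(z)$ and $f^n(z')$ never coincide, so $a_n(z,z')>0$ for every $n$. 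This constructs the limit function appearing in the statement, and the three cases (i)--(iii) correspond to the three qualitative possibilities for how $a_n(z,z')$ approaches $c(z,z')$ in this setting.

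The crux is the rigidity underlying case (iii). Schwarz--Pick has a well-known equality clause: for a holomorphic self-map of the unit disc, $d_\D(f(a),f(b)) = d_\D(a,b)$ for a single pair $a \neq b$ forces $f$ to be a M\"obius automorphism. Transferring this through Riemann maps, if $a_N(z_0,z_0') = c(z_0,z_0')$ for some $N$ and some $(z_0,z_0') \notin E$, then $f \colon U_N \to U_{N+1}$ is in fact a conformal isomorphism and hence a hyperbolic isometry. Running this argument from $N$ onwards shows $f \colon U_n \to U_{n+1}$ is a conformal isomorphism for every $n \geq N$, so $a_n(z,z') = c(z,z')$ simultaneously for all pairs $(z,z') \in (U \times U) \setminus E$ once $n \geq N$. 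This is precisely case (iii), and it is uniform in the pair as required.

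If on the contrary no such equality ever arises, every $(a_n)$ is strictly decreasing off $E$, and it remains to separate (i) from (ii). The plan would be to prove that the limit $c(\cdot,\cdot)$ is either identically $0$ on $(U \times U) \setminus E$ or strictly positive there: the ``mixed'' behaviour with $c=0$ at some pairs and $c>0$ at others should be ruled out. The tool is a normal families argument on the sequence of compositions $\psi_n \circ f^n \colon U \to \D$, where $\psi_n \colon U_n \to \D$ is a Riemann map renormalised so that $\psi_n(f^n(z_0))=0$ at a chosen base point; one extracts a locally uniform limit, and if this limit is constant on a hyperbolic ball then the identity principle forces it to be constant throughout $U$, pushing $c \equiv 0$ and yielding (i). Otherwise $c$ stays strictly positive off the diagonal and $E$, producing (ii). The principal obstacle is exactly this dichotomy: showing that pointwise collapse of one pair propagates to all pairs, in a situation where the geometry of the wandering components $U_n$ may be highly irregular. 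It is for this step that the detailed analysis of simply connected wandering domains carried out in \cite{BEFRS} is genuinely required, and where soft hyperbolic geometry alone is insufficient.
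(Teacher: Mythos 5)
The paper does not actually prove this statement: it is quoted as background (Theorem~\ref{thm:Theorem A BEFRS}) from \cite{BEFRS}, so there is no in-paper argument to compare with; I can only assess your proposal on its own terms. The parts you do carry out are fine. Schwarz--Pick along the orbit gives that $a_n(z,z')=d_{U_n}(f^n(z),f^n(z'))$ is non-increasing, so $c(z,z')$ exists, and your rigidity argument for case (iii) is correct: if $a_N(z_0,z_0')=c(z_0,z_0')$ for a single pair off $E$, then monotonicity forces $a_n(z_0,z_0')=c(z_0,z_0')$ for all $n\geq N$, the equality case of Schwarz--Pick (transported by Riemann maps) makes each $f\colon U_n\to U_{n+1}$, $n\geq N$, a conformal isomorphism, and hence every pair has eventually constant positive distance with the same $N$. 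This correctly shows that ``the limit is attained by some pair off $E$'' is equivalent to case (iii).

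The genuine gap is the dichotomy between (i) and (ii), i.e.\ ruling out the mixed situation in which no pair attains its limit but $c(z,z')=0$ for some pairs off $E$ and $c>0$ for others --- and this is the actual content of the theorem. Your normal-families sketch does not close it as written: if $c(z_1,z_0)=0$ for one pair, then for a subsequential locally uniform limit $h$ of $h_n=\psi_n\circ f^n$ (normalised by $h_n(z_0)=0$) you only learn $h(z_1)=h(z_0)=0$; two points sharing a value gives no constancy on a hyperbolic ball, so the identity principle has nothing to act on, and in any case a statement about one subsequential limit would not control the monotone limit $c$ for every pair. What is needed is a point-independent criterion for forward compositions: after conjugating to maps $g_n\colon\mathbb{D}\to\mathbb{D}$ with $g_n(0)=0$ and $G_n=g_n\circ\cdots\circ g_1$, one must know that $G_n(w)\to 0$ either for every $w\in\mathbb{D}$ or for no $w$ with $G_n(w)\neq 0$, governed by the divergence of $\sum(1-|g_n'(0)|)$ --- this is exactly \cite[Theorem 2.1]{BEFRS}, quoted in the present paper as Theorem~\ref{thm:self-maps of D} --- together with its refinement for pairs (note also that collapsing of a general pair does not reduce to collapsing towards the base point by the triangle inequality alone). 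Since you explicitly defer this step to \cite{BEFRS}, the proposal is an honest partial argument rather than a proof: it establishes the characterisation of case (iii) but not the trichotomy itself.
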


Note that univalent wandering domains are always eventually isometric, and that escaping and oscillating wandering domains of all types have been constructed (see \cite{BEFRS}, \cite{oscillating}).

We conclude Section 2 by stating the following approximation theory result from \cite{EMR}, which we will use in Section 3 to construct specific sequences of wandering domains. 

\begin{thmA}[{\cite{EMR}}]\label{thm:approx} \label{thm:Jordan-continua merged}
Let $(X_n)_{n=0}^\infty$ be a sequence of pairwise disjoint compact sets in $\C$ with $\inf \{|z| \colon z\in X_n\}\to\infty$ as $n\to\infty$. Let $\phi$ be a holomorphic function defined on a neighbourhood of $X:=\bigcup_{n} X_{n}$ such that $\phi(X_{n-1}) \subset  X_{n}$ and
$\phi(\partial X_{n-1}) \subset \partial X_n$ for all $n\geq 1$. Let $(\varepsilon_n)_{n=0}^\infty$ be a sequence of positive numbers. Then there exist  a transcendental meromorphic function~$f$ and a $\mathcal C^{\infty}$ diffeomorphism $\theta\colon \C\to\C$ such that  
\begin{enumerate}[(a)]
\item $f \circ \theta = \theta \circ \phi$ on $X$;
\item $|\theta(z)-z| \leq \varepsilon_n$ on $X_n$ for all $n\geq 0$;
\item $\partial \theta(X)\subseteq J(f)$;
\item $\theta$ is conformal on $X^\circ$.
\end{enumerate}
In particular, each $\theta(X_n)$ is a wandering compact set of $f$. If $\C\setminus X$ is connected, then $f$ can be chosen to be entire.
\end{thmA}

\section{Universality in Baker and wandering domains}  \label{sec:BakerWandering}

In this section we consider universality of composition operators in the setting of Baker and wandering domains.

The first result concerns invariant Baker domains, but we note that it also holds more generally in the case of Baker domains of period greater than one.

\begin{thm} \label{thm:InvBaker}
Let $f$ be a transcendental entire function and $U$ be an invariant Baker domain of $f$. Then there exists an unbounded domain $V \subset U$ such that the set of all entire functions that are $H(V)$-universal for $C_f$ is a comeagre set in $H(\mathbb{C})$.
\end{thm}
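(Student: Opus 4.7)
The plan is to apply the Universality Criterion (Theorem~A) to the sequence of continuous linear maps $T_n \colon H(\C) \to H(V)$ defined by $T_n(g) = g \circ f^n|_V$, for a suitably chosen unbounded $V \subset U$. Since $H(\C)$ is a complete metric space and $H(V)$ is separable metric, establishing topological transitivity of $(T_n)$ will immediately yield a dense $G_\delta$, hence comeagre, set of $H(V)$-universal functions in $H(\C)$.

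For the choice of $V$, I would first invoke Baker's classical theorem that every multiply connected Fatou component of a transcendental entire function is a wandering domain, so the invariant Baker domain $U$ is simply connected. Cowen's classification of holomorphic self-maps of simply connected domains without an interior fixed point then supplies a simply connected absorbing subdomain $V \subset U$, with $f(V) \subset V$, on which $f$ is conformally conjugate to a translation of a half-plane or strip. Consequently $V$ is unbounded (as $f^n\to\infty$ on $V$), every iterate $f^n|_V$ is injective, and $f^n \to \infty$ locally uniformly on $V$.

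For transitivity, let $V_{\varepsilon,K,\C}(g_1) \subset H(\C)$ and $V_{\delta,L,V}(g_2) \subset H(V)$ be arbitrary basic open sets. By Lemma~\ref{lma:Subsets} I may assume $K$ is polynomially convex in $\C$ and $L$ is $V$-convex. The simple connectivity of $V$ ensures that every $V$-convex compact has connected complement in $\widehat{\C}$, so $L$ is polynomially convex; and since $f^N|_V$ is a conformal injection onto the simply connected image $f^N(V)$, the set $f^N(L)$ is $f^N(V)$-convex and hence polynomially convex. Choosing $N$ large enough via the local uniform convergence $f^n\to\infty$ to place $f^N(L) \subset \{\abs{z}>R\}$ with $R > \max_{z\in K}\abs{z}$, the separating circle $\{\abs{z}=R\}$ forces the polynomial convexity of $K \cup f^N(L)$ (its complement in $\widehat{\C}$ is the union of the inside and outside minus the two pieces, joined across this circle).

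Finally, injectivity of $f^N|_L$ allows the prescription $\psi = g_1$ on $K$, $\psi = g_2 \circ (f^N|_L)^{-1}$ on $f^N(L)$ to define an element of $A(K \cup f^N(L))$. Mergelyan's theorem then produces an entire function $g$ with $\norm{g-\psi}_{K \cup f^N(L)} < \min(\varepsilon,\delta)$, simultaneously giving $g \in V_{\varepsilon,K,\C}(g_1)$ and $T_N(g) \in V_{\delta,L,V}(g_2)$, establishing transitivity. The principal technical obstacle is securing the polynomial convexity of $K \cup f^N(L)$: this requires orchestrating the simple connectivity of $V$ (via Baker plus Cowen), the injectivity of $f^N|_V$, and the asymptotic separation afforded by $f^n \to \infty$, without which Mergelyan approximation by an entire function is not available.
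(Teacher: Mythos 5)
Your proposal is correct and follows essentially the same route as the paper: the paper obtains the simply connected, absorbing, unbounded subdomain $V$ with $f^n|_V$ injective and $f^n|_V\to\infty$ by citing Bergweiler--Zheng (whose lemma is in substance Cowen's theorem applied to the Baker domain, exactly as you reconstruct via Baker's theorem on multiply connected components plus Cowen), and then concludes comeagre $H(V)$-universality by citing Jung's Corollary~2.6 rather than rerunning the polynomial-convexity/Runge--Mergelyan transitivity argument, which is the same argument the paper itself carries out in its other universality proofs. So the only difference is that you prove the two black boxes directly instead of citing them (with the minor slip that Cowen's models are plane translation, half-plane translation or half-plane dilation, which does not affect the injectivity and escape properties you actually use).
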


\begin{proof}

First we recall that a subdomain $V$ of $U$ is \emph{absorbing} for $f$, if $V$ is simply connected, $f(V) \subset  V$ and for any compact subset $K$ of $U$ there exists $n = n(K)$ such that $f^n(K) \subset V$.
It follows from \cite[Definition 2.1 and Lemma 2.1]{baker-domains} that $U$ has an absorbing domain $V \subset U$ such that
\begin{enumerate}
\item  there exists a holomorphic  function $\varphi \colon U → \Omega \in \{\mathbb{H}, \C\}$ where $\mathbb{H}=\{z: \operatorname{Re}z>0\}$ and $\varphi$ is injective in $V$;
\item there exists a M\"obius transformation $T \colon \Omega \to \Omega$ and $\varphi(V)$ is absorbing for T;
\item $\varphi(f(z)) = T(\varphi(z))$ for $z \in U.$
\end{enumerate}
It follows that $f|_V$ is injective and $f(V) \subset V$ and so $f^n|_V$ is injective. Moreover, $f^n|_V \to \infty$ as $n \to \infty$ by the definition of a Baker domain. It follows by \cite[Corollary 2.6]{Jun19} that the set of all functions in $H(\C)$ which are $H(V)$-universal for $C_f$ is a comeagre set in $H(\C)$.
\end{proof}

Following the idea of \cite[Theorem 2.2]{Jun19}  we prove that we can also have universality in the union of eventually isometric oscillating wandering domains, extending \cite[Theorem 4.11(ii)]{Jun19}.

\begin{thm} \label{thm:OscWanderingDoms}
Let $f$ be a transcendental entire function and $(U_n)$  a sequence of eventually isometric  oscillating wandering domains of $f$. Then the set of all entire functions that are $H(\bigcup_n U_n)$-universal for $C_f$ is a comeagre set in $H(\C)$. 
\end{thm}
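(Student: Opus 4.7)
My plan is to apply the Universality Criterion (Theorem~\ref{thm:UnivCriterion}) to the operators $T_n \colon H(\C) \to H(W)$, $T_n(g) = g \circ f^n|_W$, where $W \coloneqq \bigcup_{n\in\N} U_n$. By Lemma~\ref{lma:Subsets}, it suffices to verify topological transitivity on basic neighbourhoods $\mathcal{U} = V_{\varepsilon,K,\C}(g_0)$ with $K \subset \C$ polynomially convex, and $\mathcal{V} = V_{\varepsilon,L,W}(h_0)$ with $L$ a $W$-convex compact subset of $W$; that is, I must exhibit $n \in \N$ and an entire function $g$ within $\varepsilon$ of $g_0$ on $K$ such that $g \circ f^n$ is within $\varepsilon$ of $h_0$ on $L$.

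The first reduction exploits the pairwise disjointness of the $U_n$: only finitely many of them meet the compact $L$, say $U_{j_1},\dots,U_{j_p}$, and I set $L_i \coloneqq L \cap U_{j_i}$. A key observation is that an oscillating wandering domain is necessarily simply connected, because multiply connected wandering domains of transcendental entire functions must be escaping (Baker's classical theorem); hence each $U_{j_i}$ is simply connected and each $L_i$ is polynomially convex. I then select $n$ large enough so that (a) $f^n|_{U_{j_i}}$ is univalent for every $i$, as guaranteed by the eventually isometric hypothesis of Theorem~\ref{thm:Theorem A BEFRS} applied along the grand orbit of each $U_{j_i}$, and (b) $f^n(L_i) \cap K = \varnothing$ for every $i$, which holds because any compact subset of $\C$ meets only finitely many distinct Fatou components, so the $U_{j_i+n}$ containing $f^n(L_i)$ eventually avoid $K$.

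The central geometric step is to show that each $f^n(L_i)$ is polynomially convex in $\C$. Since $f^n|_{U_{j_i}}$ is a conformal bijection onto $f^n(U_{j_i}) \subset U_{j_i+n}$, the $U_{j_i}$-convexity of $L_i$ transfers to $f^n(U_{j_i})$-convexity of $f^n(L_i)$, and the simple connectedness of $U_{j_i+n}$ forces $f^n(U_{j_i})$ to be simply connected in $\C$, which rules out any bounded component of $\C \setminus f^n(L_i)$. Consequently $E \coloneqq K \sqcup \bigsqcup_{i=1}^p f^n(L_i)$ is a disjoint union of polynomially convex compacta and is itself polynomially convex. Defining $\tilde g$ on $E$ by $\tilde g|_K \coloneqq g_0|_K$ and $\tilde g|_{f^n(L_i)} \coloneqq h_0 \circ (f^n|_{U_{j_i}})^{-1}$, each piece extends holomorphically to an open neighbourhood, so Runge's theorem supplies an entire $g$ with $\|g-\tilde g\|_E < \varepsilon$, giving $g \in \mathcal{U}$ and $T_n(g) \in \mathcal{V}$.

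The main obstacle I foresee is the polynomial convexity of $f^n(L_i)$, which depends on the embedding in $\C$ and not merely on the intrinsic topology of $L_i$; the decisive input is the simple connectedness of the target wandering domain $U_{j_i+n}$. A secondary subtlety is transferring the univalence of $f^n$ from $U_0$ to the later wandering domains $U_{j_i}$ with $j_i > 0$, which I would handle by noting that the classification of Theorem~\ref{thm:Theorem A BEFRS} is a tail property invariant under shifting the starting point along the grand orbit.
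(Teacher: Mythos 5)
Your proposal follows the same overall architecture as the paper's proof (Universality Criterion, reduction via Lemma~\ref{lma:Subsets} to basic neighbourhoods, splitting $L$ among the finitely many domains it meets, and a Runge patching step), but your step (b) contains a genuine gap, and it sits exactly where the oscillating hypothesis has to do real work. The claim that ``any compact subset of $\C$ meets only finitely many distinct Fatou components'' is false in general (Fatou components may accumulate at points of the Julia set), and it fails in precisely the situation at hand: since the $U_n$ are oscillating, there are $r>0$, a point $z_0\in U_{j_i}$ and a subsequence $(m_\ell)$ with $f^{m_\ell}(z_0)\in D(0,r)$ for all $\ell$, so the pairwise distinct components $U_{j_i+m_\ell}$ all meet $\overline{D(0,r)}$; if $K\supset\overline{D(0,r)}$, then infinitely many of the domains $U_{j_i+n}$ meet $K$. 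In particular your conclusion that $f^n(L_i)\cap K=\varnothing$ for all sufficiently large $n$ cannot hold: an oscillating orbit returns to a bounded region infinitely often, so no ``eventually avoids $K$'' statement is available (indeed, if it were, the oscillating hypothesis would play no role and the case would reduce to the escaping one already treated by Jung).

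What is actually needed is the existence of a single iterate $m$ (beyond the injectivity threshold) with $f^m(L)\cap K=\varnothing$ simultaneously for all the pieces $L_1,\dots,L_p$, and since each $U_{j_i}$ tends to infinity only along a subsequence, these subsequences must be synchronised across the finitely many domains meeting $L$. This is the heart of the paper's proof: using that $f(D(0,r))\subset D(0,M(r))$, one shows that the escape times of an oscillating orbit eventually contain arbitrarily long blocks of consecutive integers, which produces a common $m$ with $K\cap f^{m}(L)=\varnothing$, i.e.\ \eqref{eq:runaway}; without this (or some substitute) your Runge step cannot be launched. The remaining ingredients of your argument --- injectivity of $f^n$ on the later domains after relabelling, polynomial convexity of $K\cup\bigcup_i f^n(L_i)$, and the Runge approximation --- are sound and essentially as in the paper, though note that the simple connectedness of $f^n(U_{j_i})$ follows from the injectivity of $f^n$ on the simply connected domain $U_{j_i}$ (equivalently, from the fact that having no holes is a topological invariant of planar compacta), not from $f^n(U_{j_i})$ being contained in the simply connected domain $U_{j_i+n}$.
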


\begin{proof}

Since $(U_n)$ is a sequence of eventually isometric wandering domains, starting from a later wandering domain if needed, we can assume that $f|_{U_n}$  is injective for all $n$.
 We set $U= \bigcup_{n=0}^{\infty} U_n$, and note that $f^m|_U$ is injective for all $m$. For $n \in \N$, we first consider the composition operators $C_{f^n,U} \colon H(\C) \to H(U)$ defined by
	\begin{equation*}
		C_{f^n, U}(g) \coloneqq g \circ f^n |_{U}
	\end{equation*}
	and we let $\mathcal{V} \subset H(\C)$ and $\mathcal{W} \subset H(U)$ denote two nonempty open subsets. 
	
Let $g \in \mathcal{V}$ and $h \in \mathcal{W}$. By applying twice Lemma \ref{lma:Subsets}, we can find an $\varepsilon >0$ and two compact sets $K \subset \C$ and $L \subset U$ with no holes, such that
	\begin{equation*}
		V_{\varepsilon,K,\C}(g)  \subset \mathcal{V} \quad \textnormal{ and } \quad V_{\varepsilon,L,U}(h)  \subset \mathcal{W}.
	\end{equation*}

The compactness of $L$ gives that it is contained in a finite union of the wandering domains $U_n$, say $L \subset \bigcup_{j=1}^M U_j$.

By the definition of oscillating wandering domains we have that for each $U_j$  there exists a subsequence $(n_{\ell}^j)$ such that $f^{n_{\ell}^j}(z) \to \infty$ for all $z \in U_j$ as $\ell \to \infty$. Moreover, each $U_j$ possesses a subsequence $(m_{\ell}^j)$ such that for all $z \in U_j$ the sequence $(f^{m_{\ell}^j}(z))$ stays bounded.  Thus, given any $z\in U_j$ there is $r>0$ such that $\{f^{m_{\ell}^j}(z): \ell\in \N\}\subset D(0,r)$. Note that since $f$ is entire we have that $f$ maps the disc $D(0,r)$ to a set contained in the disc $D(0, M(r))$, where $M(r)= \max_{|w|=r} |f(w)|$. Therefore, when $f^{n}(z) \in D(0,r)$ then $f^{n+1}(z) \in D(0,M(r))$. 
 
Hence, for each $N\in\N$, if $i \in \{1, 2, \dots, N\}$, then, for all sufficiently large  $\ell$,
\[n_{\ell+i}^j = n_{\ell}^j+i . \]

In other words, each subsequence $(n_{\ell}^j)$ evolves towards containing growing sequences of consecutive natural numbers and in fact it will eventually contain arbitrarily  long sequences of consecutive natural numbers. It follows that for each $N \in \mathbb{N}$, if $i \in \{1, \dots, N\}$, then, for sufficiently large $\ell$ we have that $f^i(U_{j+n_{\ell}^j}) = U_{j+n_{\ell}^j+i} = U_{j+n_{\ell +i}^j}$.
Hence, since $L$ is contained in finitely many wandering domains, and $f^{n_\ell^j}(z) \to \infty$ in $U_j$, we can find a common $m \in \mathbb{N}$ such that $f^m(U_j)$ lies outside the compact set $K$ for each $j=1,\dots,M$.
 More precisely, 
there exists $m \in \N$ such that
\begin{equation}\label{eq:runaway}
	K \cap f^{m}(L) = \varnothing.
\end{equation}

 Thus, we can now consider the function $\varphi \colon K\cup f^{m}(L) \to \C$ given by
\[ 
\varphi(z) \coloneqq 
\begin{cases}
	g(z), &\text{if } z \in K,\\
	h((f^{m}|_{U})^{-1}(z)), &\text{if } z \in f^{m}(L).
\end{cases}
\]
   Since $g$ is entire and $h$ is holomorphic on $U$, we deduce that $\varphi$ is holomorphic in a neighbourhood of $K\cup f^{m}(L)$. It follows by Runge's theorem that there exists a polynomial $p$ such that 
   \[\norm{\varphi-p}_{K\cup f^{m}(L)} < \varepsilon.\]
  
Since $\varphi = g$ on $K$ it follows that
\begin{equation*}
	\norm{g-p}_K = \norm{\varphi - p}_K \leq \norm{\varphi - p}_{K\cup f^{m}(L)} < \varepsilon
\end{equation*}
and thus $p\in V_{\varepsilon,K,\C}(g) \subset \mathcal{V}$.

Next notice that
\begin{align*}
	\norm{h - C_{f^{m},\, U}(p)}_{L} &= \norm{h - p \circ f^{m}}_{L}  = \norm{h \circ (f^{m}|_{U})^{-1} - p}_{f^{m}(L)} \\
	&= \norm{\varphi - p}_{f^m(L)} \leq \norm{\varphi - p}_{K \cup f^{m}(L)} < \varepsilon.
\end{align*}
Thus $C_{f^{m},U}(p) \in V_{\varepsilon, L,  U} (h) \subset \mathcal{W}$. It follows that $C_{f^{m},U}(p)$ is contained in the intersection $C_{f^{m},U}(\mathcal{V}) \cap \mathcal{W}$ which gives the topological transitivity of the sequence $(C_{f^n, U})_n$. The result follows by the Universality Criterion.\end{proof}

We note that an alternative approach for proving Theorem \ref{thm:OscWanderingDoms} employs the techniques used in the proof of \cite[Theorem 3.4]{CM23}. However, for adopting this approach one needs first to show (\ref{eq:runaway}), which is an essential step.

\begin{cor}
Let $f$ be a transcendental entire function and $(U_n)$  a sequence of eventually isometric  oscillating wandering domains of $f$. Then the set of all entire functions that are $H(U_k)$-universal for $C_f, k \in \mathbb{N}$, is a comeagre set in $H(\C)$.
\end{cor}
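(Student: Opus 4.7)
The plan is to deduce the corollary directly from Theorem~\ref{thm:OscWanderingDoms}. The comeagre set produced there consists of entire functions $g$ that are $H(\bigcup_n U_n)$-universal for $C_f$; I will show that any such $g$ is automatically $H(U_k)$-universal for every $k\in\N$, so the same comeagre set witnesses universality for each individual wandering domain.

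First I would let $\mathcal{G} \subset H(\C)$ denote the comeagre set from Theorem~\ref{thm:OscWanderingDoms}, and fix $g \in \mathcal{G}$ and $k \in \N$. The objective is to prove that for any $h \in H(U_k)$, any compact $L \subset U_k$, and any $\varepsilon > 0$, there exists $n \in \N$ with $\norm{g \circ f^n - h}_L < \varepsilon$.

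The key observation is that since the wandering domains $U_n$ are pairwise disjoint open sets, the union $U \coloneqq \bigcup_{n} U_n$ is an open set in which $U_k$ is a union of connected components. Consequently, any $h \in H(U_k)$ extends to a function $\widetilde{h} \in H(U)$ by setting $\widetilde{h}(z) \coloneqq h(z)$ for $z \in U_k$ and $\widetilde{h}(z) \coloneqq 0$ for $z \in U \setminus U_k$. Since $L \subset U_k \subset U$ is compact and $g$ is $H(U)$-universal for $C_f$, there exists $n \in \N$ such that $\norm{g \circ f^n - \widetilde{h}}_L < \varepsilon$. Because $\widetilde{h}|_{U_k} = h$, this gives $\norm{g \circ f^n - h}_L < \varepsilon$, as required. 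Hence $g$ is $H(U_k)$-universal for $C_f$.

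This shows $\mathcal{G}$ is contained in the set of entire functions that are $H(U_k)$-universal for $C_f$ for every $k$, so the latter is comeagre in $H(\C)$. I don't expect a real obstacle here; the whole argument rests only on the trivial extension of holomorphic functions across disjoint components, and on the fact that the intersection over $k \in \N$ of comeagre sets remains comeagre (if one reads the statement as requiring simultaneous universality for all $k$, which is the stronger and more natural reading).
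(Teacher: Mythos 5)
Your argument is correct and is essentially the paper's intended route: the corollary is stated without proof as an immediate consequence of Theorem~\ref{thm:OscWanderingDoms}, the point being exactly that each $U_k$ is a union of components of $U=\bigcup_n U_n$, so any $h\in H(U_k)$ extends (e.g.\ by zero) to $H(U)$ and $H(U)$-universality restricts to $H(U_k)$-universality on every compact $L\subset U_k$. Your remark that one comeagre set works simultaneously for all $k$ is also consistent with the statement.
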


We now turn our attention to the case of simply connected wandering domains that are not eventually isometric. In this setting, we construct specific examples, which demonstrate the rich variety of behaviours that occur with respect to universality. 

\begin{figure}
\centering
\begin{tikzpicture}[scale=1] 

 
\draw (-2,-1) circle [radius=0.8];
\draw (1,-1) circle [radius=0.8];
\draw (6.3,-1) circle [radius=0.8];
\draw plot [smooth] coordinates {(-3.2,2.3) (-2.7,2.5) (-2,2)(-1.5,2.2)(-1.2,2)(-1.4,1.6) (-2,1)(-2.5,1.3)(-2.8,1.2) (-3.2,1.3)(-3.4,1.9)(-3.2,2.3)};
\draw plot [smooth] coordinates {(2,2.4) (1.7,2.6) (1.4,2.4) (1,2.5)(.5,2)(.2,1.7)(.3,1.5)(.5,1)(.8,0.8)(1.3,1.1)(1.5,1)(1.9,1.1)(2.1,1.7)(2,2.4)};
\draw plot [smooth] coordinates {(5.5,2.3) (5.8,2.5) (6.1,2.3)(6.8,2.2)(7.1,2)(6.7,1.6) (6.4,1.2)(6.1,1.3)(5.7,1.2) (5.3,1.3)(5.4,1.9)(5.5,2.3)};
\draw [->] (-2,0) to [out=90,in=270] (-2,0.7); 
\draw [->] (1,0)to [out=90,in=270] (1, 0.7);
\draw [->] (6.5,0)to [out=90,in=270] (6.5, 0.7);
\draw [->] (-0.8,1.7) to [out=0,in=180] (0,1.7);
\draw [->] (-0.8,-1) to [out=0,in=180] (0,-1);
\draw [->] (4.4,-1) to [out=0,in=180] (5.2,-1);
\draw [->] (4.4,1.7) to [out=0,in=180] (5.2,1.7);
\draw [->] (2.4,-1) to [out=0,in=180] (3.2,-1);
\draw [->] (2.4,1.7) to [out=0,in=180] (3.2,1.7);
\draw [->] (-2,-2) to [out=340,in=200] (6.3,-2);

\node at (2.1,-2.55) {$B_n$};
\node at (-2,-1) {$\mathbb{D}$};
\node at (1,-1) {$\mathbb{D}$};
\node at (6.3,-1) {$\mathbb{D}$};
\node at (-2.2,1.7) {$U_0$};
\node at (1.3,1.7) {$U_1$};
\node at (6,1.7) {$U_n$};
\node at (-2.3,0.4) {$\varphi_0$};
\node at (0.7,0.4) {$\varphi_1$};
\node at (6.2,0.4) {$\varphi_n$};
\node at (-0.4,1.95) {$f$};
\node at (2.8, 1.95) {$f$};
\node at (-0.4,-1.25) {$b_1$};
\node at (2.8, -1.25) {$b_2$};
\node at (4.8, 1.95) {$f$};
\node at (4.8,-1.25) {$b_n$};

\draw[fill] (3.6,1.7) circle [radius=0.02];
\draw[fill] (3.8,1.7) circle [radius=0.02];
\draw[fill] (4,1.7) circle [radius=0.02];

\draw[fill] (3.6,-1) circle [radius=0.02];
\draw[fill] (3.8,-1) circle [radius=0.02];
\draw[fill] (4,-1) circle [radius=0.02];

\end{tikzpicture}
\caption{Compositions of Blaschke products associated to wandering domains}
\label{fig:BlaschkeWandering}
\end{figure}

\begin{eg}\label{ex:contracting}
There exists a transcendental entire function $f$ with a sequence of escaping wandering domains $(U_n)_{n \geq 0}$, and domains $V_n \subset U_n$ such that $U_n$ is contracting and the set of all entire functions which are $H(V)$-universal for $C_f$ is a comeagre set in $H(\C)$, where $V= \bigcup_{n=0}^{\infty} V_n$. 
\end{eg}
\begin{proof}
We consider the sequence of Blaschke products $B_n=b_n \circ \cdots \circ b_1$, where we set $b_j(z)=b(z)= \left(\frac{z+1/3}{1+z/3}\right)^2$, for $1 \leq j \leq n$. Then $B_n=b^n$, and $b$ has a parabolic fixed point at 1. 
It follows that there exists a Jordan domain $P \subset \mathbb{D}$ such that $b(P) \subset P$ and $b|_P$ is univalent. Moreover, \cite[Lemma 6.2]{BEFRS} gives that $\operatorname{dist}_{\mathbb{D}}(b_n(r ), b_{n+1}(r )) = O(1/n)$ as $n \to \infty$. 

We next apply Theorem \ref{thm:approx}, where we take $X_n$ to be translated copies of the closed unit disc and 
on a neighbourhood of $X_n$ we set $\varphi$ to be $b_n$ composed on the left and on the right with a translation
 We thus obtain a transcendental entire function $f$, possessing a sequence of escaping contracting wandering domains $U_n$, such that $f^n|_{U_0}$  is associated to $\varphi_n \circ \cdots \circ \varphi_0$ and hence to $b^n$. In other words, there exist Riemann maps $\varphi_j \colon \mathbb{D} \to U_j$, for $0 \leq j \leq n$, such that $f^n= \varphi_n \circ b^n \circ \varphi_{0}^{-1},$ as illustrated in Figure \ref{fig:BlaschkeWandering}. Hence there exists a sequence of domains $V_n \subset U_n$ with $V_n= \varphi_n(P)$ such that $f^n|_V$ is injective. The result follows from \cite[Corollary 2.6]{Jun19}.\end{proof}

\begin{thm}\label{thm:semi} Let $f$ be a transcendental entire function and $(U_n)$ be a sequence of semi-contracting wandering domains such that $f|_{U_n}$ has degree 2 for all $n \in \mathbb{N}$. Then there exists an open set $V_0 \subset U_0$ where all iterates of $f$ are injective. In particular, for $V= \bigcup_n f^n(V_0)$ the set of all entire functions which are $H(V)$-universal for $C_f$ is a comeagre set in $H(\C)$. \end{thm}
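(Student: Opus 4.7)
The plan is to split the argument into two parts: first, the construction of the open set $V_0 \subset U_0$ on which every iterate $f^n$ is injective, and second, the comeagreness statement, which I would deduce from the injectivity together with a Runge-type argument in the style of the proof of Theorem~\ref{thm:OscWanderingDoms}.

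For the construction of $V_0$: since each $U_n$ is simply connected (semi-contracting is defined for simply connected wandering domains in Theorem~\ref{thm:Theorem A BEFRS}) and $f|_{U_n}\colon U_n \to U_{n+1}$ is a proper holomorphic map of degree $2$, Riemann--Hurwitz yields a unique critical point $c_n \in U_n$ together with a unique nontrivial deck involution $\sigma_n\colon U_n \to U_n$ that fixes $c_n$. In the hyperbolic metric of $U_n$, $\sigma_n$ is a hyperbolic rotation by $\pi$ about $c_n$, whence $d_{U_n}(z, \sigma_n(z)) = 2\, d_{U_n}(z, c_n)$ for every $z \in U_n$. I would then pick $z_0 \in U_0$ avoiding a countable exceptional set, namely the precritical set $C = \{z \in U_0 : f^j(z) = c_j \text{ for some } j \geq 0\}$ together with the set of $z$ with $(z, c_0) \in E$.

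The central claim is that for a suitably chosen such $z_0$ one has $\delta := \inf_{n \geq 0} d_{U_n}(f^n(z_0), c_n) > 0$. Given such $\delta$, set $V_0 := B_{U_0}(z_0, \delta/2)$. If $z, z' \in V_0$ were distinct with $f^n(z) = f^n(z')$ for some minimal $n \geq 1$, then $f^{n-1}(z') = \sigma_{n-1}(f^{n-1}(z))$, hence
\[
d_{U_{n-1}}(f^{n-1}(z), f^{n-1}(z')) = 2\, d_{U_{n-1}}(f^{n-1}(z), c_{n-1}).
\]
Applying Schwarz--Pick to $f^{n-1}$ together with the reverse triangle inequality forces $d_{U_{n-1}}(f^{n-1}(z), c_{n-1}) > \delta/2$, so the left-hand side above exceeds $\delta$; on the other hand a second application of Schwarz--Pick gives $d_{U_{n-1}}(f^{n-1}(z), f^{n-1}(z')) \leq d_{U_0}(z, z') < \delta$, a contradiction. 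Hence $f^n|_{V_0}$ is injective for every $n$, and setting $V_n := f^n(V_0)$ produces pairwise disjoint open subsets of the distinct wandering domains $U_n$ with $f$ inducing a conformal bijection $V_n \to V_{n+1}$. The comeagreness of universal entire functions then follows from the Runge-based topological transitivity argument of Theorem~\ref{thm:OscWanderingDoms}: any compact $L \subset V$ meets only finitely many components $V_n$ of $V$, and the disjointness of the wandering domains $U_n$ supplies some $m$ with $K \cap f^m(L) = \varnothing$ for a prescribed $\Omega$-convex compact $K \subset \C$; an appeal to the Universality Criterion then closes the argument.

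The hard part is establishing the uniform bound $\delta > 0$. The sequence $(c_n)$ is not an $f$-orbit (typically $f(c_{n-1}) \neq c_n$), so the semi-contracting property cannot be applied directly to a single pair. The natural strategy is, for each $n$, to pick a preimage $\tilde{c}_n \in U_0$ of $c_n$ under $f^n$ with $(z_0, \tilde{c}_n) \notin E$; semi-contracting then yields $d_{U_n}(f^n(z_0), c_n) \geq c(z_0, \tilde{c}_n) > 0$ for each $n$. Extracting uniformity requires ruling out subsequences $(\tilde{c}_{n_k})$ with $c(z_0, \tilde{c}_{n_k}) \to 0$: the monotone decrease of $d_{U_n}(f^n(z_0), f^n(\cdot))$ together with Schwarz--Pick would force the $\tilde{c}_{n_k}$ to accumulate at some $w^* \in U_0$ for which $(z_0, w^*) \in E$, which one then excludes by a sufficiently careful generic choice of $z_0$ in light of the BEFRS framework.
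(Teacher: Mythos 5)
Your construction of $V_0$ is a genuinely different, intrinsic route (deck involution $\sigma_n$, hyperbolic ball about a point whose orbit avoids the critical points), and the injectivity argument from the bound $\delta=\inf_n d_{U_n}(f^n(z_0),c_n)>0$ is fine. But that bound is the whole difficulty, and the mechanism you sketch for it does not work. Schwarz--Pick only contracts distances \emph{forward}, so $c(z_0,\tilde c_{n_k})\to 0$ gives no control on $d_{U_0}(z_0,\tilde c_{n_k})$: the preimages $\tilde c_{n_k}$ may simply tend to $\partial U_0$, in which case there is no accumulation point $w^*\in U_0$ and no contradiction with avoiding $E$. Even in the accumulation case, what you get (since $c(z_0,\cdot)$ is $1$-Lipschitz) is $c(z_0,w^*)=0$, i.e.\ $(z_0,w^*)\in E$, where $w^*$ ranges over the accumulation set of the preimages of the critical points; that set need not be countable, and you have not shown the resulting set of ``bad'' base points $z_0$ is avoidable. (There is also the minor point that $f\colon U_n\to U_{n+1}$ need not be surjective, so the preimages $\tilde c_n\in U_0$ need not exist for every $n$.) The missing ingredient is exactly Theorem~\ref{thm:self-maps of D}, which you never actually invoke: choosing Riemann maps $\varphi_n$ with $\varphi_n(0)=f^n(z_0)$, the maps $f|_{U_n}$ become degree-two Blaschke products $b_n$ fixing $0$ with $|b_n'(0)|=|a_n|$; semi-contraction rules out case (i) of that theorem, so $\sum(1-|a_n|)<\infty$, hence $|a_n|\to 1$ and, by \eqref{eqn:criticalPoint}, the critical points tend to $\partial\mathbb{D}$. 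Equivalently $d_{U_n}(f^n(z_0),c_n)\to\infty$ for \emph{every} $z_0$, so your $\delta>0$ holds with no exceptional set. This is precisely how the paper proceeds: it works in the disc model, deduces $\sum(1-|a_n|)<\infty$ from Theorem~\ref{thm:self-maps of D}, concludes via the Schwarz lemma that some disc $\mathbb{D}_r$ is invariant with all $B_n|_{\mathbb{D}_r}$ injective, and pulls back to $U_0$.

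A secondary gap is in your comeagreness step: the assertion that disjointness of the wandering domains supplies $m$ with $K\cap f^m(L)=\varnothing$ for an \emph{arbitrary} compact $K\subset\C$ is unjustified — disjointness only gives $f^m(L)\cap L=\varnothing$, and a fixed compact $K$ can meet infinitely many of the $U_n$ (this is exactly the delicate point handled by the consecutive-iterates argument in the proof of Theorem~\ref{thm:OscWanderingDoms}). The paper avoids redoing that argument here by concluding directly from \cite[Corollary 2.6]{Jun19} once an invariant region with injective iterates has been produced; if you want a self-contained Runge-type finish, you need either that citation or an argument of the type used in Theorem~\ref{thm:OscWanderingDoms} to separate $K$ from $f^m(L)$.
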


For the proof of Theorem \ref{thm:semi} we will need the following theorem concerning the hyperbolic contraction of holomorphic self-maps of the unit disc.
\begin{thmA}[{\cite[Theorem 2.1]{BEFRS}}] \label{thm:self-maps of D}  
For each $n \in \N$, let $g_n \colon \D \to \D$ be
holomorphic with $g_n(0) = 0$ and $|g_n'(0)| = \lambda_n$, and set $G_n = g_n \circ \cdots \circ g_1.$
\begin{enumerate}[\normalfont (i)]
\item If $\sum_{n=1}^{\infty}(1 − \lambda_n)=\infty,$ then $G_n(w) \to 0$ as $n\to \infty$, for all $w \in \D$.

\item  If $\sum_{n=1}^{\infty}(1 − \lambda_n)<\infty,$ then $G_n(w) \not\to 0$ as $n \to ∞,$ for all $w \in \D$ for which $G_n(w)\neq 0$ for all $n \in \N$.
\end{enumerate}
\end{thmA}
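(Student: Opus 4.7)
The plan is to transfer the dynamics to the unit disc via Riemann maps, extract from the semi-contracting hypothesis a positive scale by means of Theorem~\ref{thm:self-maps of D}, and upgrade this into a Euclidean injectivity disc common to all the composed maps; the final universality statement then follows from a run-away argument on $V$.

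First I would choose a basepoint $p_{0}\in U_{0}$ whose forward orbit $p_{n}:=f^{n}(p_{0})$ avoids the critical set of $f$ (possible since this set is countable and $U_{0}$ is uncountable), and pick Riemann maps $\varphi_{n}\colon\D\to U_{n}$ with $\varphi_{n}(0)=p_{n}$. The conjugated maps $\tilde f_{n}:=\varphi_{n+1}^{-1}\circ f\circ\varphi_{n}\colon\D\to\D$ are then proper holomorphic of degree $2$, hence finite Blaschke products with $\tilde f_{n}(0)=0$ and $\lambda_{n}:=|\tilde f_{n}'(0)|\in(0,1)$ (positivity comes from the basepoint choice, strict upper bound from the degree being~$2$). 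Setting $G_{n}:=\tilde f_{n}\circ\cdots\circ\tilde f_{0}$, the identity $\varphi_{n+1}\circ G_{n}=f^{n+1}\circ\varphi_{0}$ reduces the first assertion to producing a Euclidean disc around $0$ on which every $G_{n}$ is injective.

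Picking $z'\in U_{0}$ with $(p_{0},z')\notin E$ and writing $w':=\varphi_{0}^{-1}(z')$, the semi-contracting hypothesis, via the conformal invariance of the hyperbolic metric, translates into $d_{\D}(0,G_{n}(w'))\not\to 0$; the contrapositive of part~(i) of Theorem~\ref{thm:self-maps of D} applied to $(\tilde f_{n})$ then forces $\sum_{n}(1-\lambda_{n})<\infty$, so $L:=\prod_{n\geq 0}\lambda_{n}>0$ and $|G_{n}'(0)|\geq L$ for every $n$. Because $|G_{n}|\leq 1$ on $\D$, Cauchy's estimate gives a uniform bound on $|G_{n}''|$ on $D(0,1/2)$; integrating radially, $|G_{n}'(z)-G_{n}'(0)|=O(|z|)$ uniformly in $n$ on $D(0,1/2)$. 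Choosing $r=r(L)>0$ small enough makes $\operatorname{Re}(G_{n}'(z)/G_{n}'(0))>0$ on $D(0,r)$ for every $n$, whence the standard line-integral argument gives that each $G_{n}$ is injective on $D(0,r)$. Setting $V_{0}:=\varphi_{0}(D(0,r))\subset U_{0}$, the identity $\varphi_{n+1}\circ G_{n}=f^{n+1}\circ\varphi_{0}$ then yields injectivity of $f^{n}|_{V_{0}}$ for every~$n$.

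For the in-particular statement, put $V_{n}:=f^{n}(V_{0})\subset U_{n}$ and $V:=\bigcup_{n}V_{n}$. Pairwise disjointness of the wandering components $U_{n}$ forces the $V_{n}$ to be pairwise disjoint, so $f^{n}|_{V}$ is globally injective and the sequence $(f^{n}|_{V})$ is run-away on every compact subset of $V$. Combining these two properties with a Runge-type transitivity argument modelled on the proof of Theorem~\ref{thm:OscWanderingDoms} (equivalently, invoking \cite[Corollary~2.6]{Jun19}) and the Universality Criterion (Theorem~\ref{thm:UnivCriterion}) then delivers the desired comeagre set of $H(V)$-universal entire functions in $H(\C)$. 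The main obstacle is the middle step: converting the infinite-product lower bound $|G_{n}'(0)|\geq L$ into a \emph{Euclidean} injectivity disc common to all $G_{n}$, for which the uniform boundedness $\|G_{n}\|_{\D}\leq 1$ together with Theorem~\ref{thm:self-maps of D} is essential, since the semi-contracting hypothesis by itself yields only orbital information.
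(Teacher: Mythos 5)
Your proposal does not address the statement it is supposed to prove. The statement is Theorem~\ref{thm:self-maps of D}, the dichotomy of \cite[Theorem 2.1]{BEFRS} for forward compositions $G_n=g_n\circ\cdots\circ g_1$ of holomorphic self-maps of $\D$ fixing the origin: $G_n(w)\to 0$ for all $w\in\D$ when $\sum(1-\lambda_n)=\infty$, and $G_n(w)\not\to 0$ (off the zero set) when $\sum(1-\lambda_n)<\infty$. What you have written is instead an argument for Theorem~\ref{thm:semi} (universality of $C_f$ on semi-contracting wandering domains of degree $2$), and, decisively, that argument \emph{invokes} Theorem~\ref{thm:self-maps of D} as a black box (``the contrapositive of part (i) of Theorem~\ref{thm:self-maps of D} applied to $(\tilde f_n)$ then forces $\sum_n(1-\lambda_n)<\infty$''). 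As a purported proof of Theorem~\ref{thm:self-maps of D} this is circular; as a response to the assigned statement it proves nothing. For the record, the paper itself offers no proof of this theorem either --- it is quoted from \cite{BEFRS} --- so the only honest comparison is that a proof is missing entirely.

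A correct proof would have to work directly with Schwarz--Pick-type estimates rather than with Riemann maps of wandering domains: for a self-map $g$ of $\D$ with $g(0)=0$ and $|g'(0)|=\lambda$, one bounds $|g(w)|$ above and below in terms of $|w|$ and $\lambda$ (for instance $|g(w)|\le |w|\,\dfrac{|w|+\lambda}{1+\lambda|w|}$, with a companion lower bound away from the zeros), and then the behaviour of $|G_n(w)|$ is governed by the convergence or divergence of the infinite product $\prod_n\dfrac{|w|+\lambda_n}{1+\lambda_n|w|}$, which is equivalent to the convergence or divergence of $\sum_n(1-\lambda_n)$. None of this appears in your write-up. (Separately, even read as a proof of Theorem~\ref{thm:semi}, your route differs from the paper's, which reduces to the explicit Blaschke products $b_n(z)=z\frac{z-a_n}{1-\overline{a}_n z}$, deduces $\sum(1-|a_n|)<\infty$, tracks the critical points $c_n\to\partial\D$, and extracts an invariant disc $\D_r$ by the Schwarz lemma rather than by a Cauchy-estimate argument on $G_n'$; but that is not the theorem you were asked to prove.)
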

\begin{proof}[Proof of Theorem \ref{thm:semi}]
We begin by considering 
\[b_n(z)= z \frac{z-a_n}{1-\overline{a}_n z}, \quad \textnormal{where } a_n \in \D, \]
 and we let $c_n\coloneqq c_n(a_n)$ denote the critical point of $b_n$ in $\mathbb{D}$. Since $c_n$ is the unique solution of $b_n'(z)=0$ in the unit disc $\mathbb{D}$, a straightforward calculation shows that 
\begin{equation} \label{eqn:criticalPoint}
    c_n=\frac{1-\sqrt{1-|a_n|^2}}{\overline{a}_n}.
\end{equation}

Note that, up to composition with a M\"obius transformation, any wandering domain of degree 2 will be associated via Riemann maps to $b_n$. 

Since $U_n$ is semi-contracting and the hyperbolic distance is conformally invariant, we deduce that $B_n= b_n \circ \cdots \circ b_1$ is semi-contracting, and so $B_n(z) \not\to 0$, for any $z \in \mathbb{D}$. Furthermore, since $|b_n'(0)|=|a_n|$ it follows by Theorem \ref{thm:self-maps of D}, with $g_n=b_n$, that $\sum (1-|a_n|) <\infty$. We deduce that $a_n$ tends to the boundary $\partial \mathbb{D}$ and so the same is true for $c_n$. 
 
Hence, it follows by the Schwarz lemma that there exists $0<r<1$ such that $B_n(\mathbb{D}_r) \subset \mathbb{D}_r$ and  $B_n|_{\mathbb{D}_r}$ is injective, and so there exists $N \in U_0$ such that $f(N) \subset N$ and $f|_N$ is injective. The result now follows from \cite[Corollary 2.6]{Jun19}.\end{proof}

To illustrate the theorem we give an example of such a sequence of wandering domains. 

\begin{eg}\label{ex:semi}
There exists a transcendental entire function with a sequence of escaping wandering domains $(U_n)$ and domains $V_n \subset U_n$ such that $U_n$ is semi-contracting and the set of all entire functions which are $H(V)$-universal for $C_f$ is a comeagre set in $H(\C)$, where  $V= \bigcup_{n=0}^{\infty} V_n$.
\end{eg}

\begin{proof}
Similar to Example \ref{ex:contracting}, we take $b_j(z)=b(z)= \left(\frac{z+1/2}{1+z/2}\right)^2$, for $1 \leq j \leq n$. Then $b$ has an attracting fixed point at 1, and the critical point is at $-1/2$. Hence, there exists a neighbourhood of 1, such that $b(N) \subset N$ and $b|_N$ is injective. It thus follows that there exists a domain $D=N \cap \mathbb{D}$ such that $b(D) \subset D$ and $b|_D$ is injective. 

Moreover, it follows from \cite[Lemma 6.2]{BEFRS} that $\operatorname{dist}_{\mathbb{D}}(b_n(r ), b_{n+1}(r )) \not\to 0$ as $n \to \infty$. As before, we can thus use Theorem \ref{thm:approx} to obtain a transcendental entire function $f$ and a sequence of escaping semi-contracting wandering domains $U_n$ and $V_n \subset U_n$ with the required properties.\end{proof}

\begin{rmk}
Examples of oscillating wandering domains similar to  Examples \ref{ex:contracting} and \ref{ex:semi} can also be constructed. To do this, the maps $b_n$ can be combined with a scaling and translation to give model maps $\varphi_n$, which will produce oscillating wandering domains, cf.\ for example \cite{oscillating}, \cite{EMR}.
\end{rmk}

It turns out that a variety of different behaviours may occur in simply connected wandering domains and we will show that it is possible to construct a simply connected wandering domain where universality cannot be obtained on any of its open  subsets. In order to do this, we recall that a sequence of self-maps $(f_n)$ of a domain $\Omega$ is said to be \emph{eventually injective} on $\Omega$ if, for each compact subset $K$ of $\Omega$, there exists $n_0 \in \N$ such that for $n \geq n_0$ the restriction $f_{n}|_K \colon K \to \Omega$ is injective.

\begin{eg}
There exists a transcendental entire function $f$, possessing a sequence of simply connected contracting wandering domains $(U_n)$, such that there does not exist an open set $V_0 \subset U_0$ with $f^n|_{V_0}$ eventually injective.  In particular, there does not exist an entire function that is $H\left( \bigcup_{n=0}^{\infty} V_n\right)$-universal for $C_f$.
\end{eg}

\begin{proof}
It suffices to construct a sequence of finite Blaschke products which have the required property, since by Theorem \ref{thm:approx} this implies the existence of simply connected wandering domains with the same property. 

Let $b_n(z)= z \frac{z-a_n}{1-\overline{a}_n z}$, for $a_n \in \mathbb{D}$. We will show that we can choose $a_n$ such that there do not exist an open subset $V$ of $\mathbb{D}$ nor $N \in \N$ such that $B_n|_V$ is injective for all $n \geq N$, where $B_n= b_n \circ \cdots \circ b_1$. Note that each $b_n$ is uniquely determined by the choice of $a_n$.

Let $D_n\coloneqq D(k_n,r_n)$ be a sequence of discs which forms a countable exhaustion of $\mathbb{D}$. One way to construct such an exhaustion is as follows: let $E=\{ (k_n, r_n) : n\in\mathbb{N} \}$ be a countable listing of all pairs in $(\mathbb{Q}+i\mathbb{Q})\times \{ 1/n:n\in\mathbb{N}\}$ that satisfy the following two properties: 
\begin{enumerate}[(i)]
    \item each pair $(k,r)$ appears infinitely often in the list;
    
    \item for each $(k,r)\in E$, the corresponding disc $D(k,r)$ lies in $\mathbb{D}$.
\end{enumerate}
Thus, as members of our exhaustion we can consider the corresponding discs $D_n := D(k_n,r_n)$, for all $n \in \mathbb{N}.$

Let $c_n\coloneqq c_n(a_n)$ denote the critical point of $b_n$ in $\mathbb{D}$ and we recall from (\ref{eqn:criticalPoint}) that 
    $c_n=\frac{1-\sqrt{1-|a_n|^2}}{\overline{a}_n}$.
 Next, we take $a_1=1/2$, and choose $a_2$ to satisfy the equation $b_1(k_1)=c_2$. Then $b_2(z)$ will be 2-1 in a neighbourhood of $b_1(k_1)$. Now choose $a_3$ so that $b_2(b_1(k_2))=c_3$. We then choose $a_4$ so that $B_3(k_1)=c_4$ and we continue this way.  Hence for each $k_n$ we create a subsequence $(n_m)$ such that $b_{n_m}$ is not injective in $B_{n_m-1}(D_n)$. Since any point in the unit disc $\mathbb{D}$ belongs to infinitely many such neighbourhoods the result follows.\end{proof}

We finish this section by studying the case of multiply connected wandering domains. As noted in \cite{GEM09}, the case of multiply connected domains is substantially different, which is reflected in the following proposition.

\begin{prop}\label{mcwd}
Let $U$ be a multiply connected Fatou component of $U$. Then there is no open set $V_0 \subset U_0$ with $f^n|_{V_0}$ eventually injective. In particular, there is no entire function which is $H\left( \bigcup_{n=0}^{\infty} V_n\right)$-universal for $C_f$, where $V_n= f^n(V_0)$.
\end{prop}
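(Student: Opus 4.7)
The plan is to combine the classical theorem of Baker (every multiply connected Fatou component of a transcendental entire function is a bounded wandering domain with $f^n\to\infty$ locally uniformly) with the structure theorems of Bergweiler--Rippon--Stallard for multiply connected wandering domains. The latter guarantee that, for $n$ beyond some threshold $N_0$, the component $U_n$ is doubly connected and surrounds $0$. By Riemann--Hurwitz applied to annular domains (both of Euler characteristic zero), the proper holomorphic map $f\colon U_n\to U_{n+1}$ is then an unramified covering of some degree $d_n\geq 1$, and since $\operatorname{mod}(U_n)\to\infty$ while $\operatorname{mod}(U_{n+1})=d_n\operatorname{mod}(U_n)$, the compositional degree $D_n:=d_{N_0}d_{N_0+1}\cdots d_{n-1}$ tends to infinity.

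For the first assertion I would argue by contradiction: suppose $f^n|_{V_0}$ were injective for all $n\geq N\geq N_0$. Since injectivity persists under restriction, I may shrink $V_0$ to a small open disc. Setting $W:=f^N(V_0)$, this is a nonempty open subset of $U_N$. Choosing Riemann maps of $U_N$ and $U_n$ onto round annuli, the unramified covering $f^{n-N}\colon U_N\to U_n$ conjugates to the canonical model $z\mapsto z^{D_n/D_N}$. Its fibres in the uniformisation $\phi_N(U_N)\subset\{1<|z|<R_N\}$ are rotational orbits with angular spacing $2\pi D_N/D_n$, which tends to zero. Since $\phi_N(W)$ is nonempty and open, it contains a small curvilinear rectangle of some positive angular extent $\varepsilon>0$; as soon as $2\pi D_N/D_n<\varepsilon$, this rectangle contains two distinct points of a common fibre, contradicting the injectivity of $f^{n-N}|_W$.

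For the ``in particular'' consequence I would suppose to the contrary that $g\in H(\mathbb{C})$ is $H(V)$-universal for $C_f$, where $V=\bigcup_n V_n$. Universality lets me pick a subsequence $(n_k)$ with $g\circ f^{n_k}|_V\to (z\mapsto z)|_V$ locally uniformly on $V$. Fixing a closed disc $K\subset V_0$ with nonempty interior, Hurwitz's theorem applied to the function $(z,w)\mapsto (g\circ f^{n_k})(z)-(g\circ f^{n_k})(w)$ on $K\times K$ off the diagonal (whose limit $z-w$ is nonzero there) forces $g\circ f^{n_k}|_K$, and hence $f^{n_k}|_K$, to be injective for all large $k$. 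On the other hand, applying the first assertion to the interior of $K$ yields that $f^n$ is non-injective on that interior, and \emph{a fortiori} on $K$, for all sufficiently large $n$, giving the desired contradiction.

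The main obstacle I anticipate is cleanly extracting from the Bergweiler--Rippon--Stallard machinery both the annular structure of $U_n$ for large $n$ and the unramified covering property of $f\colon U_n\to U_{n+1}$ with divergent compositional degree; once these are in place the fibre-counting step is essentially a matter of uniformisation, and the only geometric input needed is that $\phi_N(W)$ has positive angular extent, which holds automatically for any nonempty open set.
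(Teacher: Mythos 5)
Your argument for the first assertion rests on the structural claim that, for all large $n$, the component $U_n$ is \emph{doubly connected}, so that $f\colon U_n\to U_{n+1}$ becomes an unramified covering of annuli and $f^{n-N}$ can be conjugated to the power model $z\mapsto z^{d}$ whose fibres are angularly equidistributed. This premise is false in general: by the theorem of Kisaka and Shishikura the eventual connectivity of a multiply connected wandering domain is either $2$ or $\infty$, and both possibilities occur (Baker's original examples are infinitely connected). In the infinitely connected case the Euler characteristic argument via Riemann--Hurwitz does not yield an unramified covering, there is no annular uniformisation of $U_n$, and the fibre-spacing computation has no meaning, so your proof only covers the eventual-connectivity-$2$ case. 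This is precisely why the paper does not argue directly but instead invokes \cite[Corollary 1.5]{EFGP}, which asserts for \emph{every} multiply connected wandering domain that no open subset carries eventually injective iterates, together with \cite[Theorem 3.21(b),(c)]{GEM09} for the failure of universality; a self-contained replacement for the infinitely connected case would need the Bergweiler--Rippon--Stallard fine structure (absorbing round annuli $B_n$ on which $f$ behaves like a large power map), not just connectivity and moduli.

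Within the doubly connected case your argument is essentially sound, provided you cite properness of $f\colon U_n\to U_{n+1}$ (which holds because $U_n$ is bounded and $\partial U_n\subset J(f)$) and the fact that $\operatorname{mod}(U_n)\to\infty$ (a Bergweiler--Rippon--Stallard result), which together with $\operatorname{mod}(U_{n+1})=d_n\operatorname{mod}(U_n)$ does force the compositional degree to diverge. Your Hurwitz argument for the ``in particular'' statement is also correct: approximating the identity forces $g\circ f^{n_k}$, hence $f^{n_k}$, to be injective on a closed disc $K\subset V_0$ for large $k$, and this is incompatible with the first assertion; note moreover that the strong form you need (non-injectivity of $f^n|_{K^\circ}$ for \emph{all} large $n$) comes for free, since a single coincidence $f^m(a)=f^m(b)$ with $a\neq b$ persists under all further iterates. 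So the gap is confined to, but genuinely present in, the infinitely connected situation, which your proposal does not address.
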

\begin{proof}
 For finitely connected domains \cite[Theorem 3.21(b)]{GEM09} implies that $C_f$ is not universal on $U_n$. Also \cite[Theorem 3.21(c)]{GEM09} implies that for infinitely connected domains injectivity is necessary for the universality of $C_f$, as it is in the case of simply connected domains. However, since wandering domains are the only multiply connected Fatou components, it follows from \cite[Corollary 1.5]{EFGP} that for such domains there is no neighbourhood $V_0 \subset U_0$ where the iterates are eventually injective. This concludes the proof.
\end{proof}

\section{Behaviour of universal functions near the fixed points of $f$}
\label{sec:boundary}
There is a growing literature concerning the range of holomorphic functions that are universal with respect to various sequences of natural mappings. For example, there are several interesting results about the following three classes of universal functions: the celebrated class of universal entire functions in the sense of Birkhoff, where translations acting on $H(\mathbb{C})$ yield universal approximations in $H(\mathbb{C})$; the well-studied class of universal Taylor series $\mathcal{U}_T$, where partial sums acting on $H(\mathbb{D})$ yield universal approximations on $A(K)$ for all compact sets in $\mathbb{C}\setminus\D$ with connected complement; and the more recently studied class of Abel universal functions, where dilations acting on $H(\mathbb{D})$ yield universal approximations in $C(K)$ for all compact proper subsets $K$ of $\partial \D$. For more details, see \cite{CS04,CM00, Gardiner2013,GK14,Gardiner2014,GardinerManolaki2016, Gardiner2018, CMM23}.

One of the most striking results of this type is that each function $f \in \mathcal{U}_T$ satisfies a Picard-type property near each boundary point; that is, for every region of the form $D_{\zeta,r} \coloneqq \{z\in \mathbb{D} : \abs{z-\zeta} <r \}$, where $r>0$ and $\zeta\in\partial\mathbb{D}$, the image $f(D_{\zeta,r})$ is the entire complex plane $\mathbb{C}$ except possibly one point \cite[Corollary 2]{GK14}. More recently, it was shown that the same property holds for any Abel universal function. 

It is thus natural to examine the analogous behaviour of functions which are universal with respect to composition operators. For example, if $f$ is an entire function and $z_0$ is a parabolic fixed point for $f$, lying on the boundary of (at least) one of its Fatou components $U$, and if $g$ is a $H(U)$-universal function for $C_{f}$, then we can easily deduce that $g(U\cap D(z_0,r))$ is dense in $\mathbb{C}$ for each $r>0$ (and so $z_0$ is an essential singularity for $g$). To see this, we fix $r>0$, $\varepsilon >0$ and $c\in \mathbb{C}$. Since $z_0$ is a parabolic point of $f$ we have that $f^{n}|_{U} \to z_0$ locally uniformly.  Thus, for each $w\in U$ we can find $n_0\in\mathbb{N}$ such that for each $n\geq n_0$ we have that $|f^{n}(w)-z_{0}|<r$.  Since $g$ is universal, we have that $\{ g\circ f^{n}(w) : n\in\mathbb{N}\}$ is dense in $\mathbb{C}$, which implies that we can find $n_1\geq n_0$ such that $g(f^{n_1}(w))=g\circ f^{n_1}(w)\in D(c, \varepsilon)$.  Hence $g(f^{n_1}(w))\in D(c, \varepsilon)\cap g(U \cap D(z_0,r))$, and so $g(U\cap D(z_0,r))$ is dense in $\mathbb{C}$.

It is thus natural to ask whether we can replace density by a stronger assertion and to examine whether we could have similar assertions about other types of fixed (or periodic) points or about infinity. As we will see below, we can replace density with full range and derive variants of this result for several types of fixed (or periodic) points.

\begin{thm}\label{gen}
 Let $U$ be a non-empty open set and let $f$ be an entire function such that $f(U) \subset U$ and there is a  subsequence $(n_k)$ such that $(f^{n_k})$  converges locally uniformly on $U$ to some constant value $z_{0}$. If $g$ is a $H(U)$-universal function for $C_{f}$, then $g(D(z_{0},r)\cap U)=\mathbb{C}$ for each $r>0$.
\end{thm}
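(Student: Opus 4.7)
Plan: The plan is to argue by contradiction using Hurwitz's theorem. Fix $c\in\C$ and $r>0$ and suppose, for contradiction, that $c\notin g(D(z_0,r)\cap U)$.

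First, I would fix a closed disc $K=\overline{D(w_0,\rho)}\subset U$ for some $w_0\in U$ and some $\rho>0$. By the locally uniform convergence of $(f^{n_k})$ to the constant $z_0$ on $U$, there is $k_0\in\N$ such that $f^{n_k}(K)\subset D(z_0,r)\cap U$ for every $k\geq k_0$. Since $g$ omits the value $c$ on $D(z_0,r)\cap U$, the holomorphic function $g\circ f^{n_k}-c$ is then nowhere zero on $K$ for each $k\geq k_0$.

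Next, I would consider the target function $h(z)=c+(z-w_0)\in H(U)$, for which $h-c$ has an isolated simple zero at $w_0\in K^{\circ}$ and is not identically zero. By the universality of $g$, the orbit $\{g\circ f^{n}:n\in\N\}$ is dense in $H(U)$, so one extracts a sequence $(m_{j})\subset\N$ with $g\circ f^{m_j}\to h$ locally uniformly on $U$, and in particular uniformly on $K$. If the approximating indices $(m_j)$ can be chosen within the dynamical subsequence $\{n_k:k\geq k_0\}$, then the sequence $(g\circ f^{m_j}-c)$ consists of nowhere-zero holomorphic functions on $K$ that converge uniformly to $h-c$, which has a zero at $w_0\in K^{\circ}$ and is not identically zero. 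Hurwitz's theorem then furnishes the desired contradiction.

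The main obstacle is therefore to coordinate the universality-derived indices with the dynamical subsequence. My plan to resolve it is to establish, as a preliminary lemma, that the subsequential orbit $\{g\circ f^{n_k}:k\in\N\}$ is itself dense in $H(U)$. To this end I would first observe that universality forces $z_0\notin U$ (otherwise $g$ is holomorphic at $z_0$ and $\{g\circ f^{n_k}\}$ is locally uniformly bounded on compacta, contradicting the density of the full orbit in $H(U)$); consequently $(f^{n_k})$ runs away from every compact subset of $U$, in the sense of Grosse-Erdmann and Mortini. Combining this run-away property with the injectivity of $f^{n_k}|_K$ on sufficiently small compact sets (which is available in the concrete settings of attracting, superattracting and parabolic fixed points treated in the remainder of Section~\ref{sec:boundary}, via the local linearisation/B\"ottcher/petal models), Theorem~\ref{thm:GEM09} applied to $(f^{n_k})$ yields the universality of the operator sequence $(C_{f^{n_k}})$ on $H(U)$; in particular, the orbit of $g$ along $(n_k)$ is dense in $H(U)$, which is exactly what the Hurwitz step requires.
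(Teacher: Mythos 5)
Your Hurwitz step with the target $P(z)=z-w_0+c$ is exactly the engine of the paper's proof, but the way you propose to close the argument contains a genuine gap. The decisive step is your claim that the subsequential orbit $\{g\circ f^{n_k}\colon k\in\N\}$ is dense in $H(U)$, which you try to extract from Theorem~\ref{thm:GEM09}. That theorem characterises when the sequence of operators $(C_{f^{n_k}})$ admits \emph{some} universal function; it does not say that the particular function $g$, assumed universal for the \emph{full} sequence $(C_{f^{n}})$, remains universal along a prescribed subsequence, and there is no reason it should. Moreover, Theorem~\ref{thm:GEM09} requires a simply connected domain and eventual injectivity on compacta, while Theorem~\ref{gen} assumes neither: $U$ is an arbitrary open set (possibly disconnected or multiply connected) and no injectivity of $f$ on $U$ is hypothesised, so appealing to ``the concrete settings of Section~\ref{sec:boundary}'' proves a weaker statement than the one at hand. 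Finally, your justification that $z_0\notin U$ is a non sequitur: local boundedness of the subsequence $(g\circ f^{n_k})$ does not contradict density of the full orbit. (If one wants $z_0\notin U$, the correct route is to observe that $z_0$ would then be a fixed point with $|f'(z_0)|<1$, forcing $f^{n}\to z_0$ and hence $g\circ f^{n}\to g(z_0)$ locally uniformly, which is incompatible with universality.)

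The paper sidesteps the coordination problem you worry about rather than solving it by subsequential density: it reduces, without loss of generality, to the case where the whole sequence $(f^{n})$ converges locally uniformly to $z_0$ (remarking that the subsequential case is checked in the same way). After fixing a closed disc $D\subset U$ centred at $w_0$, one then has $f^{n}(D)\subset D(z_0,r)\cap U$ for \emph{all} sufficiently large $n$, so whatever large indices the universality of $g$ supplies to approximate $P$ on $D$ are automatically ``dynamically good''; Hurwitz yields a zero of $g\circ f^{n}-c$ in $D(w_0,\rho)$, its image under $f^{n}$ lies in $D(z_0,r)\cap U$, and hence $c\in g(D(z_0,r)\cap U)$ directly --- no injectivity, no run-away argument, and no density of the orbit along $(n_k)$ are needed. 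Your instinct that some coordination must be addressed in the genuinely subsequential case is reasonable (the paper treats it with a brief parenthetical), but the repair you propose does not supply a valid argument for it, so as written the proof is incomplete precisely at the point where it departs from the paper's reduction.
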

\begin{proof}
Let $g$ be a $H(U)$-universal function for $C_{f}$.
Without loss of generality we may assume that the whole sequence $(f^k)$ converges locally uniformly on $U$ to some constant value $z_{0}$ (indeed, it is easy to check that the proof holds if we replace $(f^k)$ by $(f^{n_k})$).

Let $r>0$ and let $c\in\mathbb{C}$. Also let $D$ be a closed disc in $U$ centred at some point $w_{0}$.
From our assumptions, we can find $k_{0}\in \mathbb{N}$ such that, for all $n\geq k_{0}$, 
\begin{align}\label{sub}
    f^{n}(D)\subset D(z_{0},r)\cap U.
\end{align} 
Since $g$ is a $H(U)$-universal function for $C_{f}$, we can uniformly approximate the polynomial $P(z)=z-w_0+c$ on an open neighbourhood of $D$ by a sequence of the form $(g\circ f^{n_k})$. 
Since the sequence of holomorphic functions $(F_k):=(g\circ f^{n_{k}}-c)$ converges uniformly to $z-w_{0}$ (which has a simple zero at $w_{0}$) on $D$, by  Hurwitz's theorem we can find $\rho>0$ and $k_1\in\mathbb{N}$ such that $D(w_0,\rho)\subset D$ and $F_{k}$ has exactly $1$ zero in $D(w_0, \rho)$ for all $k\geq k_1$. Thus, for all $k\geq k_1$,
\begin{align}\label{bel}
c\in (g\circ f^{n_k}) (D(w_0, \rho)). 
\end{align}
Since $ D(w_0, \rho)\subset D$ and (\ref{sub}) holds, we have that $f^{n}(D(w_0, \rho))\subset D(z_{0},r)\cap U$ for all $n\geq k_{0}$. Hence, using (\ref{bel}), we get that $f^{n_k} (D(w_0, \rho))\subset D(z_0,r) \cap U$ for each $k\geq \max\{k_0, k_1\}$, which implies that $c\in g(D(z_{0},r)\cap U)$. \end{proof}

\begin{rmk}\label{infinity}
\hangindent\leftmargini
\textup{(i)} Theorem \ref{gen} also holds if we replace $z_0$ by infinity.
\begin{enumerate}[(i)]
\setcounter{enumi}{1}

\item Note that if $z_0 \in \C$ then $U$ is contained in a cycle of parabolic or attracting basins or a sequence of wandering domains. If we consider the point at infinity then $U$ is contained in a cycle of Baker domains or a sequence of wandering domains.
 \end{enumerate}
\end{rmk}
 
We will now apply the above result to various types of fixed (or periodic) points and infinity. To formulate these results we will use the local fixed point theory as described in subsection \ref{subsec:holodyn}.

We will first consider the case of attracting fixed points. Let $z_0$ be an attracting fixed (or periodic) point of $f$. 
Then, as per K\oe{}nigs's linearization theorem, there exists an open neighbourhood $U$ of $z_0$ such that $f^n|_U \to z_0$ locally uniformly (for details, cf.\ subsection \ref{subsec:holodyn}). 
We recall that in \cite[Theorem 3.1]{Jun19} it was shown that the set of functions in $H(\mathbb{C}\setminus \{z_0\})$ that are $H(W)$-universal for $C_{f}$ for all non-empty sets $W$ in $U\setminus \{z_{0}\}$, with connected complement, is a comeagre set in $H(\mathbb{C}\setminus \{z_{0}\})$.
We will consider these functions in the next corollary, where we apply Theorem \ref{gen} to reveal that they have full range near the attracting fixed point $z_{0}$.

\begin{cor}
Let $z_0$ be an attracting  fixed (or periodic) point of $f$ and let $U$ be a neighbourhood of $z_0$ as above. Let $W \subset U\setminus \{z_{0}\}$ be a nonempty set. For any function $g$ in $H(\mathbb{C}\setminus \{z_0\})$ that is $H(W)$-universal for $C_{f}$, the set $g (D(z_0,r)\setminus \{z_{0}\})$ is the entire complex plane $\mathbb{C}$.  In particular, $g$ has an essential singularity at $z_0$.
\end{cor}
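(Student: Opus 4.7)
The plan is to reduce the corollary to (a small adaptation of) Theorem~\ref{gen}. The first step is to use K\oe{}nigs's linearization theorem, which provides an open neighbourhood $U$ of $z_0$ and a conformal map $\varphi\colon U \to V$ with $\varphi\circ f = \lambda\varphi$, where $\lambda = f'(z_0) \in \D\setminus\{0\}$. This immediately yields two facts that will be used repeatedly: $f^n|_U \to z_0$ locally uniformly, and $f^{-1}(z_0)\cap U = \{z_0\}$ (since $f(z)=z_0$ for $z\in U$ gives $\lambda\varphi(z)=0$, hence $\varphi(z)=0$, hence $z=z_0$). In particular $U\setminus\{z_0\}$ is forward invariant under $f$.

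Next I would run the argument of Theorem~\ref{gen} locally inside $W$. Fix $r>0$ and $c\in\C$, and pick a closed disc $\overline{D(w_0,\rho_0)} \subset W$. Using that $g$ is $H(W)$-universal for $C_{f}$, extract a subsequence $(g\circ f^{n_k})$ that converges uniformly on $\overline{D(w_0,\rho_0)}$ to the polynomial $P(z) = z - w_0 + c$. Since $f^n \to z_0$ locally uniformly on $U \supset W$, for all $k$ sufficiently large we have $f^{n_k}(\overline{D(w_0,\rho_0)}) \subset D(z_0,r)\cap U$. Applying Hurwitz's theorem to $F_k = g\circ f^{n_k} - c$ (whose limit $z-w_0$ has a simple zero at $w_0$), one obtains $\rho>0$ such that $F_k$ has a zero in $D(w_0,\rho)$ for all large $k$, and hence $c \in (g\circ f^{n_k})(D(w_0,\rho))$. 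Combining the forward invariance of $U\setminus\{z_0\}$ with $z_0 \notin D(w_0,\rho)$ yields $f^{n_k}(D(w_0,\rho)) \subset D(z_0,r)\setminus\{z_0\}$ for all large $k$, so $c \in g(D(z_0,r)\setminus\{z_0\})$. Arbitrariness of $c$ and $r$ gives the full-range conclusion.

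The essential-singularity claim then follows at once from the classification of isolated singularities: a removable singularity would force $g$ to be bounded on some $D(z_0, r)\setminus\{z_0\}$, and a pole would force $|g| \to \infty$ at $z_0$; neither is compatible with $g$ attaining every complex value in every punctured disc $D(z_0, r)\setminus\{z_0\}$. The main (and mild) obstacle is that the hypothesis ``$g$ is $H(W)$-universal'' is \emph{a priori} weaker than the ``$H(U)$-universal'' hypothesis of Theorem~\ref{gen}; this is sidestepped by noting that the proof of Theorem~\ref{gen} uses universality only to uniformly approximate one linear polynomial on one small closed disc, which can be chosen to lie inside $W$.
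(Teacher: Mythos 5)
Your proposal is correct and takes essentially the same route as the paper, which deduces the corollary by applying Theorem \ref{gen}; you simply rerun its Hurwitz argument with the approximation disc placed inside $W$. Your two supplementary observations --- that K\oe{}nigs's linearization makes $U\setminus\{z_0\}$ forward invariant (so the images $f^{n_k}(D(w_0,\rho))$ avoid $z_0$ and land in the punctured disc), and that universality is needed only to approximate one polynomial on one closed disc in $W$, bridging the $H(W)$- versus $H(U)$-universality hypotheses --- correctly supply the details the paper leaves implicit.
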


We next consider the case when $z_0$ is a superattracting fixed (or periodic) point for a non-constant function $f$, i.e.\ $f(z_0)=z_0$ and $f'(z_0)=0$. Let $U$, $V$ and $\varphi \colon U\to V$ be as in B\"ottcher's theorem (see subsection \ref{subsec:holodyn}). For any $r>0$ with $\mathbb{C} \setminus \overline{\mathbb{D}_r} \subset V$ consider the set $B_{r} \coloneqq \varphi^{-1}( \overline{\mathbb{D}}_r)$ which is a compact neighbourhood of $z_0$ that is contained in $U$. Then we have that $f^{n}(B_{r}^{\circ}\setminus \{z_0\})\subset B_{r}^{\circ}\setminus \{z_0\}$ for each $n\in \mathbb{N}$. As observed in \cite{Jun19}, in that case it is not possible to obtain $\mathcal{F}$-universality for $C_{f}$ for any family $\mathcal{F}$ of continuous functions on a subset of $U\setminus\{z_{0}\}$ with non-empty interior. However, it is shown that we have $C(K)$-universality if we restrict to suitable compact subsets $K$ with empty interior. Let $\mathcal{K}(B_{\delta})$ be the space of all compact subsets of $B_{\delta}$, endowed with the Hausdorff metric. In particular, in \cite[Theorem 3.7]{Jun19} it is shown that for each $\delta >0$ with $U_{\delta}[0]\coloneqq\{z\in\mathbb{C}: |z|\geq \delta\}\subset V$, comeagre many $K$ in $\mathcal{K}(B_{\delta})$ have the property that comeagre many functions in $H(\mathbb{C}\setminus \{z_0\})$ are $C(K)$-universal for $C_{f}$. Since $K$ has empty interior, we cannot apply Theorem \ref{gen} to get the full range property for this case (i.e.\ for superattracting fixed points), but we can get something slightly milder. Indeed, acting as in the second paragraph of this section, one can show that for each such universal function $g$, the set $g (D(z_0,r)\setminus \{z_{0}\})$ is dense in $\mathbb{C}$, which implies that $g$ has an essential singularity at $z_{0}$. Hence, by Picard's Theorem, the set $g (D(z_0,r)\setminus \{z_{0}\})$ is the entire complex plane except possibly one point.

Unlike the superattracting case, for the case of parabolic fixed points we can apply Theorem \ref{gen}. Indeed, recall that for each parabolic point $z_0$, there are $m$ attracting petals $P_1, P_2, \dots, P_m$, where $m=\min\{ n\in\mathbb{N}: f^{(n+1)}(z_0)\neq 0\}$. It is shown in \cite[Theorem 3.2]{Jun19} that if $P=\bigcup_{k=1}^m P_k$ then the set of all $H(P)$-universal functions for $C_{f}$ contains a dense $G_{\delta}$ set in $H(\mathbb{C}\setminus\{z_0\})$. Theorem $\ref{gen}$ thus gives the following corollary.

\begin{cor}
Let $z_0$ be a parabolic fixed (or periodic) point of $f$, let $P_1, P_2,\dots, P_m$ be its attracting petals and let $P=\bigcup _{k=1}^{m} P_k$. Then, for each $g$ which is $H(P)$-universal for $C_{f}$, for each $k\in \{1,\dots, m\}$ and $r>0$, the set $g(P_k\cap D(z_0,r))$ is the entire complex plane. 
\end{cor}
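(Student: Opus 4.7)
The plan is to reduce the claim to a direct application of Theorem \ref{gen} on each petal separately. Fix $k \in \{1, \ldots, m\}$ and $r>0$; I would apply Theorem \ref{gen} with the open set $U = P_k$, constant value $z_0$, and the whole sequence $(f^n)$ serving as the required convergent subsequence. The dynamical hypotheses of Theorem \ref{gen} are immediately available from the local theory of parabolic fixed points recalled in subsection \ref{subsec:holodyn}: $P_k$ is a non-empty open set, $f(P_k)\subset P_k$, and $f^n|_{P_k}\to z_0$ locally uniformly.

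The only substantive step is to promote the assumed $H(P)$-universality of $g$ for $C_f$ to $H(P_k)$-universality for $C_f$, so that Theorem \ref{gen} can legitimately be invoked with $U = P_k$. Since the petals $P_1,\ldots,P_m$ are pairwise disjoint simply connected domains, they are precisely the connected components of the open set $P$. Hence, given any target $h_k \in H(P_k)$, one extends it to an element $h \in H(P)$ by setting $h|_{P_k} = h_k$ and $h|_{P_j} \equiv 0$ for $j \ne k$; this is holomorphic on $P$ because $P$ decomposes as the disjoint union of its clopen components. Applying the $H(P)$-universality of $g$ to $h$ then yields a subsequence $(g \circ f^{n_j})$ converging to $h$ locally uniformly on $P$, and restriction to $P_k$ gives $g \circ f^{n_j}|_{P_k} \to h_k$ locally uniformly on $P_k$. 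Thus $g$ is $H(P_k)$-universal for $C_f$.

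An application of Theorem \ref{gen} to $U = P_k$ now delivers $g(D(z_0,r) \cap P_k) = \mathbb{C}$ for every $r>0$, which is precisely the stated conclusion. The main (and essentially only) obstacle is the bookkeeping observation that the petals are clopen components of $P$, allowing universality to be inherited on a single petal; after this, the proof is a direct verification of the hypotheses of a result already established. The periodic-point case is handled in exactly the same spirit, working with the appropriate iterate of $f$ and the corresponding cycle of petals.
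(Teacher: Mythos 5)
Your proposal is correct and follows the paper's approach: the paper proves this corollary simply by invoking Theorem \ref{gen} on the petals, exactly as you do. Your extra bookkeeping step (extending a target function by zero on the other petals to pass from $H(P)$-universality to $H(P_k)$-universality, so that Theorem \ref{gen} applies with $U=P_k$ and yields the conclusion on each individual petal) is a valid way of filling in the detail the paper leaves implicit.
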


 Recall that for a wandering domain $U$ we denote by $U_n$ the Fatou component containing $f^n(U)$. If $U_n$ are eventually isometric escaping or oscillating wandering domains it follows from  Theorem \ref{thm:OscWanderingDoms} and \cite[Theorem 4.11(b)]{Jun19} that the set of entire functions that are $H(\bigcup_n U_n)$-universal for $C_{f}$ is a comeagre set in $H(\mathbb{\C})$. Using Theorem \ref{gen} together with Remark \ref{infinity}(i), we can derive the following result for any such universal function.
\begin{cor}\label{cor:wd}
Let $f$ be a transcendental entire function and $U$ be a simply connected eventually isometric escaping or oscillating wandering domain of $f$. Then, any entire function $g$ which is $H(\bigcup_n U_n)$-universal for $C_{f}$ 
has an essential singularity at infinity, i.e.\ $g$ is a transcendental entire function. Moreover, for any $M>0$, the set $g(\bigcup_n U_n\setminus\overline{D(0,M)})$ is the entire complex plane.
\end{cor}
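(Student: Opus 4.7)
The plan is to invoke Theorem \ref{gen} in its version for the point at infinity (Remark \ref{infinity}(i)), applied to the open set $U^{\ast} \coloneqq \bigcup_{n \geq 0} U_n$. First I would observe that $U^{\ast}$ is forward invariant under $f$ (since $f(U_n) \subset U_{n+1}$), and by hypothesis $g$ is $H(U^{\ast})$-universal for $C_f$. Once Theorem \ref{gen} applies with $z_0 = \infty$, it yields directly $g(U^{\ast} \setminus \overline{D(0,M)}) = \C$ for every $M > 0$, which is the second assertion. The first assertion will then follow by a quick contradiction: if $g$ were a polynomial of degree $d \geq 0$, then $|g(z)| \to \infty$ as $|z| \to \infty$, so for $M$ sufficiently large $g(\{|z| > M\}) \subset \{|w| > R\}$ for some $R > 0$; since the components $U_n$ drift to infinity in both the escaping and oscillating settings, $U^{\ast} \setminus \overline{D(0,M)}$ is nonempty, contradicting the full range property. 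Hence $g$ must be transcendental.

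The real work lies in verifying the hypothesis of Theorem \ref{gen}, namely exhibiting a subsequence $(n_k)$ with $f^{n_k} \to \infty$ locally uniformly on $U^{\ast}$. The escaping case is immediate: the full sequence $(f^n)$ tends to infinity locally uniformly on each Fatou component $U_j$, hence uniformly on any compact $L \subset U^{\ast}$ (which meets only finitely many $U_j$). In the oscillating case, by definition there is a subsequence $(m_k)$ with $f^{m_k}|_{U_0} \to \infty$, and Montel's theorem upgrades this to local uniform convergence on $U_0$. The hard part will be promoting this to simultaneous local uniform convergence on all components $U_j$ intersecting a given compact set. For this I would invoke the structural fact used in the proof of Theorem \ref{thm:OscWanderingDoms}: the subsequence $(m_k)$ eventually contains arbitrarily long blocks of consecutive integers. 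A diagonal extraction then yields a single subsequence $(n_k)$ such that $f^{n_k} \to \infty$ locally uniformly on each of the finitely many $U_j$ of interest, by matching the shifts $(m_k) - j$ inside these consecutive blocks; normality on each $U_j$ ensures the limit is the constant $\infty$ once it is so on a smaller set where the identity $f^{n_k+j}|_{U_0} \to \infty$ transfers the conclusion.

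With the hypothesis verified, Theorem \ref{gen} together with Remark \ref{infinity}(i) delivers the full range conclusion, and the essential singularity follows as above. The principal obstacle is this diagonal-and-shift argument in the oscillating case to obtain a common subsequence along the grand orbit $U^{\ast}$; the rest reduces cleanly to results already established in the paper, in particular Theorem \ref{thm:OscWanderingDoms} and the infinity version of Theorem \ref{gen}.
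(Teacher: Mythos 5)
Your proposal is correct and follows the paper's own route: the corollary is obtained by applying Theorem~\ref{gen} in its version for the point at infinity (Remark~\ref{infinity}(i)) to the forward-invariant open set $\bigcup_n U_n$, with the transcendence of $g$ then read off from the full-range conclusion exactly as you argue. The extra care you take in the oscillating case --- producing a single subsequence escaping locally uniformly on the whole union via the consecutive-blocks/shift argument --- is precisely the verification the paper leaves implicit by pointing back to the proof of Theorem~\ref{thm:OscWanderingDoms}, so the two arguments coincide in substance, with yours simply more detailed.
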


Finally, in the case of Baker domains, in Theorem \ref{thm:InvBaker} we consider universality with respect to absorbing domains. In this setting we obtain the following corollary.

\begin{cor}
Let $f$ be a transcendental entire function and $U$ be an invariant Baker domain of $f$. Then, any entire function $g$ which is $H(V)$-universal for $C_{f}$, where $V$ is the absorbing domain of $U$,
has an essential singularity at infinity, i.e.\ $g$ is a transcendental entire function. Moreover, for any $M>0$, $g(V\setminus\overline{D(0,M)})= \mathbb{C}$. 
\end{cor}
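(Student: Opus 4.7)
The plan is to reduce the statement directly to Theorem \ref{gen} together with Remark \ref{infinity}(i), following the same template used for the preceding corollaries concerning attracting, parabolic and wandering-domain fixed points. The two ingredients we need are: (a) that the hypotheses of Theorem \ref{gen} (in its ``$z_0=\infty$'' variant) are satisfied with $U$ replaced by the absorbing domain $V$; and (b) that the range condition yields the required transcendence of $g$.

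First I would check the dynamical hypotheses on $V$. Recall from the proof of Theorem \ref{thm:InvBaker} that the absorbing domain $V\subset U$ is simply connected with $f(V)\subset V$. Since $U$ is an invariant Baker domain, $f^n\to\infty$ locally uniformly on $U$, and hence $f^n|_V\to\infty$ locally uniformly as well. Thus $V$ is a nonempty open set, $f$ maps $V$ into itself, and the whole sequence $(f^n)$ converges locally uniformly on $V$ to the ``constant value'' infinity. This is precisely the situation covered by Theorem \ref{gen} together with Remark \ref{infinity}(i), after identifying $D(z_0,r)$ with a neighbourhood of infinity $\{z\in\C : |z|>M\}$.

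Next I would apply that version of Theorem \ref{gen}. Given any entire function $g$ that is $H(V)$-universal for $C_f$ and any $M>0$, the theorem immediately gives
\[
g\bigl(V\setminus\overline{D(0,M)}\bigr)=\C,
\]
which is the second assertion of the corollary.

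Finally I would deduce the essential-singularity statement from this range property. Since $g$ is entire and $g(V\setminus\overline{D(0,M)})=\C$ for every $M>0$, the function $g$ takes every complex value outside every disc centred at the origin; in particular $g$ cannot tend to infinity as $|z|\to\infty$, so $g$ is not a polynomial. Hence $g$ is a transcendental entire function and has an essential singularity at infinity. There is no serious obstacle here: the content is already packaged in Theorem \ref{gen} and Remark \ref{infinity}, and the only verification required is that the absorbing domain $V$ inherits the ``locally uniform convergence to infinity'' property from the ambient Baker domain $U$, which is essentially built into the definition of $V$.
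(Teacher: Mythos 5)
Your proposal is correct and follows exactly the route the paper intends: the corollary is a direct application of Theorem \ref{gen} in its point-at-infinity form (Remark \ref{infinity}(i)) to the absorbing domain $V$, which satisfies $f(V)\subset V$ and $f^n|_V\to\infty$ locally uniformly, with the transcendence of $g$ then following from the full-range property near infinity just as you argue.
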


It would be tempting to assume that the Julia set $J(g)$ of transcendental entire functions $g$ which are universal would always be  equal to $\mathbb{C}$ since, as we saw above they have wild behaviour. However, as we demonstrate below, this is not the case.
\begin{prop}
There exists a transcendental entire function $g$ which is universal for $C_f$, as described in Corollary \ref{cor:wd} and such that $J(g) \neq \C$.
\end{prop}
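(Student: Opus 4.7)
The plan is to combine a Baire category argument with a simple normality observation. First I would fix a concrete choice of $f$ fitting the setting of Corollary \ref{cor:wd}: for instance, any transcendental entire function possessing a sequence $(U_n)$ of eventually isometric escaping wandering domains will do, and by Theorem \ref{thm:OscWanderingDoms} the set $\mathcal{U}$ of entire functions that are $H(\bigcup_n U_n)$-universal for $C_f$ is then comeagre in $H(\C)$, and in particular dense.

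Next I would introduce the open subset
\[
\mathcal{O} \coloneqq \left\{ h \in H(\C) : \norm{h}_{\overline{\D}} < \tfrac{1}{2} \right\},
\]
which is open in the topology of local uniform convergence and contains $h \equiv 0$, hence is nonempty. Any $h \in \mathcal{O}$ satisfies $h(\overline{\D}) \subset \overline{D(0,1/2)} \subset \D$, so iteration is well-defined on $\D$ and the iterates $(h^n)|_{\D}$ take values in $\overline{D(0,1/2)}$ for all $n \geq 1$. By Montel's theorem the family $\{h^n|_{\D}\}_{n \in \N}$ is normal on $\D$, so $\D \subset F(h)$ and consequently $J(h) \neq \C$.

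Since $H(\C)$ is a Baire space (being a Fréchet space), the intersection $\mathcal{U} \cap \mathcal{O}$ is comeagre in $\mathcal{O}$, and in particular nonempty. Any $g$ in this intersection is a universal function for $C_f$ of the type described in Corollary \ref{cor:wd}; by that corollary $g$ must be transcendental, while the membership $g \in \mathcal{O}$ guarantees $J(g) \neq \C$, which is exactly what is required. The whole argument reduces to a dense--open intersection, so no serious obstacle arises; the key observation is simply that smallness of $g$ on $\overline{\D}$ is an open condition, whereas universality is a generic property.
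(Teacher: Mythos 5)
Your argument is correct, and it reaches the conclusion by a slightly different mechanism than the paper. The paper first observes that universality for $C_f$ is invariant under scaling (if $g$ is $H(U)$-universal then so is $cg$ for every $c\neq 0$), takes an arbitrary universal $g$, and rescales it by $1/M$ so that the resulting universal function maps $D(0,1)$ into itself; Montel's theorem then places $D(0,1)$ in the Fatou set. You instead avoid the scaling observation altogether and run a Baire category argument: the set of universal functions is comeagre in $H(\C)$ (hence dense), while smallness on $\overline{\D}$ is a nonempty open condition, so the two sets meet; the Montel/normality step at the end is identical. Your route buys a little more, namely that the functions with the desired property form a comeagre subset of the open set $\mathcal{O}$ rather than a single example, whereas the paper's scaling trick is more elementary in that it needs only one universal function rather than the full comeagreness statement.

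One small point to tidy up: you invoke Theorem \ref{thm:OscWanderingDoms} for a sequence of eventually isometric \emph{escaping} wandering domains, but that theorem is stated for \emph{oscillating} ones; for the escaping case the comeagreness comes from \cite[Theorem 4.11(b)]{Jun19}, as recalled just before Corollary \ref{cor:wd}. Either cite that result or simply choose $f$ with eventually isometric oscillating wandering domains (such $f$ exist by \cite{oscillating}); with that adjustment the proof is complete.
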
 
\begin{proof} From  the conclusion of Corollary \ref{cor:wd}, we deduce that the Julia set $J(g)$ of any such universal function $g$ is always unbounded. 
To see that $J(g) \neq \mathbb{C}$, we first note that if a function $g$ is $H(U)$-universal for $C_{f}$, then for all $c\neq 0$ we have that $cg$ is also $H(U)$-universal for $C_{f}$. Choose any function $g$ which is $H(U)$-universal for $C_{f}$. Since $g$ is entire $g(D(0,1))$ is bounded and so we can choose any $M>0$ such $g(D(0,1))\subset D(0,M)$. The function $G$ defined by $G(z)=\frac{g(z)}{M}$ is also $H(U)$-universal for $C_{f}$ and maps $D(0,1)$ into $D(0,1)$. Hence, by Montel's Theorem, the sequence of iterates $(G^{n})$ is normal on $D(0,1)$, and so $D(0,1)$ lies in the Fatou set of $G$, which shows that its Julia set is not the entire complex plane.\end{proof}

\begin{rmk}
 In all the above cases, if $g$ is universal for $C_f$ then $e^g$ is never universal for $C_f$ since it omits zero. Interestingly, this is not the case for other well-studied classes of universal functions. For example, if $g$ is an Abel universal function $e^{g}$ is always universal, while for universal Taylor series we merely know that this happens for most (in the sense of Baire) cases (see \cite{CMM24} and \cite{CM00}, respectively).

\end{rmk}

\section{Universality for Weighted Composition Operators} \label{sec:wcomp}

Similar to subsection \ref{subsec:UnivAndCompOpers}, in this section we again denote by $D \subset \C$ a fixed open set, we let $\Omega \subset \C$ be a domain with $\Omega \supset D$ and we let $f \colon D \to D$ be a function that is holomorphic on $D$. 
We are now interested in the question of $H(U)$-universality with respect to weighted composition operators $\wcomp \colon g \mapsto \omega \cdot (g\circ f)$, for open $U \subset D$, $f \in H(D)$ and $\omega \in H(U)$.

Various universality results for composition operators were proven in \cite{Jun19} and in the preceding sections of this article. By the following appropriate adaption of Definition \ref{defn:universality}, in the sequel we show that analogous results hold true for more general classes of weighted composition operators.

\begin{defn} \label{defn:universalityWComp}
	Let $M \subset D$ be both locally compact and $\sigma$-compact. Let $\omega$ be a function holomorphic on a neighbourhood of $M$ and $\mathcal{F} \subset C(M)$ be a family of functions. A function $g \in H(\Omega)$ is called $\Omega$-$\mathcal{F}$-\emph{universal} for $\wcomp$ if
	\begin{equation*}
		\mathcal{F} \subset \overline{\left\lbrace \prod_{j=0}^{n-1} \left( \omega \circ f^j \right) \cdot \left( g \circ f^n \right)|_M : n \in \N \right\rbrace}^{C(M)}.
	\end{equation*}
		The weighted composition operator $\wcomp$ is said to be $\Omega$-$\mathcal{F}$-\emph{universal} if there exists an $\Omega$-$\mathcal{F}$-universal function for $\wcomp$. For brevity we will refer to $\mathcal{F}$-universality when the domain $\Omega$ is clear from the context.
	\end{defn}

Let $U$ be a nonempty open subset of $D \subset \Omega$ that contains no holes. We say $U$ is \emph{evacuating} for $f$ if $f^n |_U \to \partial_\infty \Omega$ locally uniformly.
Here $\partial_\infty$ denotes the boundary of subsets of $\C\cup\{\infty\}$  with respect to the chordal metric.

The following theorem provides the foundation on which many of the ensuing universality statements for weighted composition operators are built. In the proof of the theorem we use the notation $(\wcomp^n)_{n\in \N}$ to signify the sequence whose elements are given by the terms $\wcomp^n \coloneqq \prod_{j=0}^{n-1} \left( \omega \circ f^j \right) \cdot \left( g \circ f^n \right)$, for $n\in \N$.

\begin{thm} \label{thm:wcompUniv}
    Let $D$, $\Omega$ and $f \in H(D)$ be as above, let $U \subset D$ be evacuating for $f$ and let the restrictions $f^n |_U$ be injective for each $n \in \N$. Assume also that $\omega \in H(U)$ is non-vanishing.
	Then the set of functions in $H(\Omega)$ that are $H(U)$-universal for $\wcomp$ is a comeagre set in $H(\Omega)$.
\end{thm}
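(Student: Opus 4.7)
The plan is to invoke the Universality Criterion (Theorem~\ref{thm:UnivCriterion}) for the sequence of continuous maps $W_n \colon H(\Omega) \to H(U)$ defined by $W_n(g) := P_n \cdot (g \circ f^n)|_U$ with $P_n := \prod_{j=0}^{n-1}(\omega \circ f^j) \in H(U)$, and verify topological transitivity. The conclusion then follows immediately, since the criterion guarantees that the set of universal elements is a dense $G_\delta$, hence comeagre, in $H(\Omega)$.

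Given nonempty open sets $\mathcal{V} \subset H(\Omega)$ and $\mathcal{W} \subset H(U)$, two applications of Lemma~\ref{lma:Subsets} yield $g_0 \in \mathcal{V}$, $h_0 \in \mathcal{W}$, $\varepsilon > 0$, an $\Omega$-convex compact $K \subset \Omega$, and a $U$-convex compact $L \subset U$ such that $V_{\varepsilon, K, \Omega}(g_0) \subset \mathcal{V}$ and $V_{\varepsilon, L, U}(h_0) \subset \mathcal{W}$. Since $U$ is evacuating, for $N$ sufficiently large $f^N(L)$ lies arbitrarily close to $\partial_\infty \Omega$; in particular, $f^N(L) \cap K = \varnothing$, and after suitably choosing $N$ (and, if needed, replacing $K$ by a larger $\Omega$-convex compact) we may further assume that $K \cup f^N(L)$ is $\Omega$-convex. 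The non-vanishing of $\omega$ makes $P_N$ non-vanishing on $U$, and the injectivity of $f^N|_U$ provides a holomorphic inverse $(f^N|_U)^{-1} \colon f^N(U) \to U$. Define $\psi := (h_0/P_N) \circ (f^N|_U)^{-1}$, which is holomorphic on a neighbourhood of $f^N(L)$, and glue it with $g_0$ to form
\[
\varphi(z) := \begin{cases} g_0(z), & z \in K, \\ \psi(z), & z \in f^N(L), \end{cases}
\]
a function holomorphic on a neighbourhood of $K \cup f^N(L)$ since the two pieces are disjoint.

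Runge's theorem for $\Omega$ now supplies, for each $\delta > 0$, some $p \in H(\Omega)$ with $\|\varphi - p\|_{K \cup f^N(L)} < \delta$. Restricting to $K$ gives $\|g_0 - p\|_K < \delta$, while the identity $\psi \circ f^N = h_0/P_N$ on $L$ produces the estimate
\[
\|W_N(p) - h_0\|_L = \|P_N \cdot (p \circ f^N - \psi \circ f^N)\|_L \leq \|P_N\|_L \cdot \|p - \psi\|_{f^N(L)} < \|P_N\|_L \cdot \delta.
\]
The choice $\delta := \min(\varepsilon,\, \varepsilon/\|P_N\|_L)$ places $p$ simultaneously in $\mathcal{V}$ and $W_N(p)$ in $\mathcal{W}$, establishing topological transitivity of $(W_n)_{n \in \N}$.

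The principal technical hurdle is securing the $\Omega$-convexity of $K \cup f^N(L)$, required to invoke Runge's theorem within $H(\Omega)$ rather than a larger ambient space. Neither $\Omega$-convexity of $K$ nor $U$-convexity of $L$ (and hence of $f^N(L)$, via the injective $f^N|_U$) is preserved under union in general; the evacuating hypothesis, driving $f^N(L) \to \partial_\infty \Omega$ in the chordal metric, is precisely what ensures that for $N$ large no relatively compact component of $\Omega \setminus (K \cup f^N(L))$ appears, once $K$ is replaced by a suitable $\Omega$-convex hull absorbing any transient bounded components. Handling this topological configuration cleanly for arbitrary $\Omega$ is the most delicate step of the argument; the rest of the proof is a streamlined adaptation of the gluing-and-Runge technique used for the unweighted case.
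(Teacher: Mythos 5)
Your proposal follows the paper's proof almost step for step: the Universality Criterion, reduction to topological transitivity of the weighted sequence, two applications of Lemma~\ref{lma:Subsets}, the separation $K\cap f^N(L)=\varnothing$ from the evacuating hypothesis, the gluing of $g_0$ with $(h_0/P_N)\circ (f^N|_U)^{-1}$, the Runge approximation, and the final estimate with the constant $\norm{P_N}_L$ (the paper's $M$) are all the same. The genuine gap is precisely at the step you flag as the ``principal technical hurdle'': you assert that for large $N$ (after possibly enlarging $K$) the set $K\cup f^N(L)$ is $\Omega$-convex, and you attribute this to the chordal convergence $f^N(L)\to\partial_\infty\Omega$. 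That reasoning is not sound: proximity to $\partial_\infty\Omega$ does not prevent $f^N(L)$ from bounding holes that are relatively compact in $\Omega$. For example, with $\Omega=\C$ nothing in your argument excludes a configuration in which $f^N(L)$ is a circle-like set of huge radius surrounding $K$; then $K\cup f^N(L)$ has a hole containing no point of $\C\setminus\Omega$, it is not $\Omega$-convex, and your proposed repair of enlarging $K$ to ``absorb'' the bounded component would force the enlarged $K$ to meet $f^N(L)$, destroying the disjointness on which the definition of $\varphi$ rests.

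What actually rules out such configurations --- and what your proof never invokes --- is the combination of two hypotheses: $U$ has no holes (this is built into the definition of evacuating) and $f^N|_U$ is injective. The $U$-convex compact $L$ then has no holes, hence $f^N(L)$ has no holes, and the disjoint union of the $\Omega$-convex compact $K$ with a compact set without holes creates no new holes; thus every hole of $K\cup f^N(L)$ is a hole of $K$ and so contains a point of $\C\setminus\Omega$. This is exactly how the paper proceeds: it places one point of $\C\setminus\Omega$ in each such hole, adds $\infty$, and applies Runge's theorem to get a rational function with poles in that set, whose restriction lies in $H(\Omega)$; no enlargement of $K$ is needed. In your write-up the injectivity of $f^N|_U$ is used only to define $(f^N|_U)^{-1}$ and the no-holes property of $U$ is not used at all, which is the symptom of the missing argument. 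Once the convexity step is justified this way, the remainder of your proof --- the estimate $\norm{W_N(p)-h_0}_L\le \norm{P_N}_L\,\norm{p-\varphi}_{f^N(L)}$ and the choice of $\delta$ --- goes through exactly as in the paper.
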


\begin{proof}
	By the Universality Criterion it suffices to show that the sequence $(\wcomp^n)_{n\in \N}$ is topologically transitive. 
	
	For $n \in \N$, we first consider the composition operators $\compfU \colon H(\Omega) \to H(U)$ defined by
	\begin{equation*}
		\compfU(g) \coloneqq g \circ f^n |_U
	\end{equation*}
	and we let $\mathcal{V} \subset H(\Omega)$ and $\mathcal{W} \subset H(U)$ denote nonempty open subsets. 
	
		For $g \in \mathcal{V}$ and $h \in \mathcal{W}$, two applications of Lemma \ref{lma:Subsets} give that we can find  $\varepsilon >0$, an  $\Omega$-convex  compact subset $K \subset \Omega$ and a compact subset $L \subset U$, with no holes, such that 
	\begin{equation*}
		V_{\varepsilon,K,\Omega}(g)  \subset \mathcal{V} \quad \textnormal{ and } \quad V_{\varepsilon,L,U}(h)  \subset \mathcal{W}.
	\end{equation*}
	
	Next we let $\delta = \textrm{dist}(K, \partial_\infty\Omega) >0$.
	By assumption we have the uniform convergence $f^n |_L \to \partial_\infty\Omega$ and hence there exists $N \in \N$ with $\textrm{dist}(f^N(z), \partial_\infty\Omega) < \delta$ for all $z \in L$, which gives that
	\begin{equation*}
		K \cap f^N(L) = \varnothing.
	\end{equation*}
	As $L$ does not contain holes and since the restriction $f^N |_U$ is injective, it follows that $f^N(L)$ does not contain holes.
	Therefore, the disjoint union  $K \cup f^N(L)$ does not produce any new holes and the $\Omega$-convexity of $K$ implies that $K \cup f^N(L)$ is also $\Omega$-convex.
	
	Thus we can choose from each hole of $K \cup f^N(L)$, which is a hole of $K$, a point that lies in $\C \setminus \Omega$.
	We let $A$ be a union of these points and we define $B \coloneqq A \cup \{ \infty \}$. 
	Then we have $B \subset \C_\infty \setminus \Omega \subset \C_\infty \setminus (K \cup f^N(L))$, since $L \subset D$ and $D$ is $f$-invariant, and $B \cap C \neq \varnothing$ for all components $C$ of $\C_\infty \setminus K \cup f^N(L)$.

	Next we consider the function
	\begin{equation*}
		\varphi(z) =
		\begin{cases}
			g(z), & \textrm{if } z \in K, \\
			\displaystyle \left( \prod_{k=1}^N (\omega \circ f^{-k}(z)) \right)^{-1} (h \circ f^{-N})(z), & \textrm{if } z \in f^N(L).
		\end{cases}
	\end{equation*}

	Since the disjoint sets $K$ and $f^N(L)$ are compact with $K \subset \Omega$ and $f^N(L) \subset f^N(U)$, and since $g \in H(\Omega)$, $h \in H(U)$, it follows that $\varphi$ can be extended holomorphically to an open neighbourhood of $K \cup f^N(L)$. Hence Runge's theorem yields a rational function $R$, with poles only in $B \subset \C_\infty \setminus \Omega$, such that
	\begin{equation*}
		\norm{\varphi - R}_{K \cup f^N(L)} < \frac{\varepsilon}{M+1}
	\end{equation*}
	where
	\begin{equation*}
		M \coloneqq \max_{z \in f^N(L)} \abs{\prod_{k=1}^N (C_{f^{-k}} \omega)(z)}.
	\end{equation*}

	The restriction $R|_\Omega$ is holomorphic on $\Omega$ and since $\abs{g(z) - R(z)} < \varepsilon$ for each $z \in K$, it follows that $R|_\Omega \in V_{\varepsilon,K,\Omega}(g) \subset \mathcal{V}$. 
	
	Furthermore, for each $z \in L$, we set $y \coloneqq f^N(z) \in f^N(L)$ and hence
	\begin{align*}
		\abs{h(z) - (\wcomp^N R)(z)} &= \abs{h(z) - \prod_{j=0}^{N-1} (\comp^j \omega)(z)\cdot  (\comp^N R)(z)} \\
		&= \abs{(h\circ f^{-N})(y) - \prod_{j=0}^{N-1} \left(\omega \circ f^{j-N} \right)(y) \cdot R(y)} \\
		&= \abs{\prod_{k=1}^N (\omega \circ f^{-k})(y) \cdot \varphi(y) - \prod_{k=1}^N \left(\omega \circ f^{-k}\right)(y) \cdot R(y)} \\
		&= \abs{\prod_{k=1}^N (C_{f^{-k}} \omega)(y)} \abs{\varphi(y) - R(y)} < \varepsilon.
	\end{align*}
So $\wcomp^N R \in V_{\varepsilon,L,U}(h) \subset \mathcal{W}$ and thus $\wcomp^N(\mathcal{V}) \cap \mathcal{W} \neq \varnothing$, which gives that the sequence $(\wcomp^n)_{n \in \N}$ is topologically transitive.
\end{proof}

With respect to a given $f$-evacuating subset $U$ of $D$, Theorem \ref{thm:wcompUniv} proves the $H(U)$-universality for $\wcomp$.  The following corollary expands on this to reveal that, under certain conditions, there exists an abundant supply of functions that induce $H(U)$-universality for all nonempty open subsets $U$ of $D$.

To state the next corollary, we adapt slightly the terminology of \cite[Definition 2.3]{Jun19}: for an open nonempty set $V \subset \C$, we say a sequence $(M_n)$ of subsets of $V$ is \emph{$V$-exhausting} if for each compact subset $K \subset V$, without holes, there exists $N \in \N$ with $K \subset M_N^\circ$. 
As discussed in \cite[p.~854]{Jun19} (cf.\ also \cite{BGMR95}), each open set $V \subset \C$ admits a $V$-exhausting sequence composed of open subsets, without holes, of $V$. 

The proof of Corollary \ref{cor:wcompAllSubsets} follows the argument of \cite[Corollary 2.4]{Jun19}; we include it here for the convenience of the reader and for completeness.

\begin{cor} \label{cor:wcompAllSubsets}
     Let $f \in H(D)$, $(V_m)_{m \in \N}$ be a $D$-exhausting sequence of subsets of $D$ that are evacuating for $f$, and for each $m \in \N$ let the restrictions of the iterates $f^n |_{V_m}$ be injective. Assume also that $\omega \in H(V_m)$ is non-vanishing for each $m\in \N$. Then the set of functions in $H(\Omega)$ that are $H(U)$-universal with respect to $\wcomp$ for all nonempty open subsets $U$ of $D$ without holes is comeagre in $H(\Omega)$.
\end{cor}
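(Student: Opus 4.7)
The plan is to combine Theorem~\ref{thm:wcompUniv} with a Baire category intersection and a single application of Runge's theorem. For each $m \in \N$, Theorem~\ref{thm:wcompUniv} applied with the evacuating set $V_m$ in place of $U$ yields that the subset $A_m \subset H(\Omega)$ of functions that are $H(V_m)$-universal for $\wcomp$ is comeagre. Because $H(\Omega)$ is a Fr\'echet space and thus a Baire space, the countable intersection $A \coloneqq \bigcap_{m \in \N} A_m$ is again comeagre. I will show that every $g \in A$ is simultaneously $H(U)$-universal with respect to $\wcomp$ for every nonempty open $U \subset D$ without holes.

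To see this, fix such a $g$, $U$, $h \in H(U)$, a compact $K \subset U$ and $\varepsilon > 0$; by the definition of $H(U)$-universality it suffices to produce $n \in \N$ with $\norm{\wcomp^n g - h}_K < \varepsilon$. Replacing $K$ by the compact set $\widehat{K}_U$, I may assume $K$ has no holes: indeed, $\widehat{K}_U$ is $U$-convex by definition, and the characterisation recalled just before Lemma~\ref{lma:Subsets} (applicable since $U$ has no holes) then identifies $U$-convexity with polynomial convexity for such compacta. Being a compact subset of $D$ without holes, this enlarged $K$ is contained in $V_m^\circ$ for some $m \in \N$ by the $D$-exhausting property. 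Runge's theorem, applied to the function $h$ which is holomorphic on a neighbourhood of the polynomially convex set $K$, supplies a polynomial $p$ with $\norm{h - p}_K < \varepsilon/2$. Regarding $p$ as an element of $H(V_m)$ and exploiting the $H(V_m)$-universality of $g \in A_m$ on the compact $K \subset V_m$, there exists $n$ with $\norm{\wcomp^n g - p}_K < \varepsilon/2$, and the triangle inequality closes the argument.

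No serious obstacle is anticipated: the only structural subtleties are the preservation of comeagreness under countable intersection and the role of the ``no holes'' assumption on $U$, which ensures both the polynomial convexity of $\widehat{K}_U$ and the applicability of the $D$-exhausting property to the enlarged compactum. These are precisely the hypotheses built into the corollary, so the proof reduces to carrying out the three ingredients above in order.
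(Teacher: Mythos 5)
Your proposal is correct and follows essentially the same route as the paper's own proof: apply Theorem~\ref{thm:wcompUniv} to each $V_m$, take the countable (Baire) intersection, then for a given $U$, $h$, $K$, $\varepsilon$ pass to a polynomially convex hull of $K$, approximate $h$ there by a polynomial via Runge's theorem, and invoke the $H(V_M)$-universality on a $V_M$ containing that hull. The only cosmetic difference is that you work with $\widehat{K}_U$ and then identify it with $\widehat{K}$ using that $U$ has no holes, whereas the paper uses $\widehat{K}$ directly.
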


\begin{proof}
For $m\in \N$, let $\mathcal{G}_m$ denote the set of functions in $H(\Omega)$ that are $H(V_m)$-universal for $\wcomp$. By Theorem \ref{thm:wcompUniv}, each set $\mathcal{G}_m$ is comeagre in $H(\Omega)$, so it follows that the countable intersection $\mathcal{G} = \bigcap_{m\in \N} \mathcal{G}_m$ is also comeagre in $H(\Omega)$.

Next, we let $\mathcal{H}$ denote the set of functions in $H(\Omega)$ that are $H(U)$-universal for $\wcomp$ for all nonempty open subsets $U \subset D$ without holes. We aim to show the inclusion $\mathcal{G} \subset \mathcal{H}$.

Let $g \in \mathcal{G}$, $U$ be a nonempty open subset of $D$ without holes, $h \in H(U)$, $\varepsilon >0$ and $K$ be a compact subset of $U$.
Since $U$ does not contain holes, we have that the polynomially convex hull $\widehat{K}$ has no holes in $D$.
Runge's Theorem thus gives the existence of a polynomial $P$ with $\norm{P-h}_{\widehat{K}} < \varepsilon/2$.

Since $(V_m)$ is $D$-exhausting, there exists $M \in \N$ with $\widehat{K} \subset V_M$. Note that $P|_{V_M} \in H(V_M)$, and since $g \in \mathcal{G} \subset \mathcal{G}_M$, the $H(V_M)$-universality of $g$ means that there exists $N \in \N$ such that
\begin{equation*}
    \norm{\prod_{j=0}^{N-1} \left( \omega \circ f^j|_{V_M} \right) \cdot \left( g \circ f^N|_{V_M} \right) -P}_{\widehat{K}} < \varepsilon/2.
\end{equation*}
Hence
\begin{align*}
    \norm{\prod_{j=0}^{N-1} \left( \omega \circ f^j|_{V_M} \right) \cdot \left( g \circ f^N|_{V_M} \right) - h}_K &\leq  \norm{\prod_{j=0}^{N-1} \left( \omega \circ f^j|_{V_M} \right) \cdot \left( g \circ f^N|_{V_M} \right) - P}_{\widehat{K}} + \norm{P - h}_{\widehat{K}} \\
    &< \varepsilon/2 + \varepsilon/2 = \varepsilon
\end{align*}
and thus the function $g$ is $H(U)$-universal for $\wcomp$ and we obtain that $g \in \mathcal{H}$.
\end{proof}

 Note that the assumptions of Corollary \ref{cor:wcompAllSubsets} are satisfied if $f$ is injective on $D$ and $f^n|_D \to \partial_\infty\Omega$ locally uniformly, which gives the following corollary.

\begin{cor} \label{cor:f-injective-univ}
     Let $f \in H(D)$ be injective on $D$, $f^n|_D \to \partial_\infty\Omega$ locally uniformly and let $\omega \in H(D)$ be non-vanishing. Then the set of functions in $H(\Omega)$ that are $H(U)$-universal for $\wcomp$ for all nonempty open subsets $U \subset D$, with no holes, is a comeagre set in $H(\Omega)$.
\end{cor}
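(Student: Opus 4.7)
The plan is to derive this corollary as a direct application of Corollary \ref{cor:wcompAllSubsets}. To do so I need to produce a $D$-exhausting sequence $(V_m)_{m\in\N}$ of open subsets of $D$, each without holes, satisfying the three hypotheses of that corollary: each $V_m$ is evacuating for $f$, every iterate $f^n|_{V_m}$ is injective, and $\omega$ is non-vanishing on each $V_m$ (which I take to be implicit in the hypothesis $\omega \in H(D)$, since otherwise the conclusion would fail when $\omega\equiv 0$).

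First, I invoke the fact recorded in the paragraph preceding Corollary \ref{cor:wcompAllSubsets}, namely that every open subset of $\C$ admits an exhausting sequence of open subsets without holes; applied to $D$ this produces the desired $(V_m)_{m\in\N}$. Next, since $f^n|_D \to \partial_\infty\Omega$ locally uniformly and $V_m \subset D$, the same locally uniform convergence holds on $V_m$, so each $V_m$ is evacuating for $f$. Non-vanishing of $\omega$ on $V_m$ is inherited immediately from its non-vanishing on $D$.

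The only point requiring an actual argument is the injectivity of every iterate $f^n|_{V_m}$, and this I would handle by induction on $n$. The base case $n=1$ is the hypothesis. For the inductive step, suppose $f^{n+1}(z) = f^{n+1}(w)$ with $z,w\in D$; setting $z'=f^n(z)$ and $w'=f^n(w)$, which both lie in $D$ since $f\colon D\to D$, we obtain $f(z')=f(w')$. Injectivity of $f$ on $D$ gives $z'=w'$, i.e.\ $f^n(z)=f^n(w)$, and the inductive hypothesis then yields $z=w$. Hence $f^n$ is injective on $D$, and in particular on each $V_m$, for every $n \in \N$.

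With the three conditions verified, Corollary \ref{cor:wcompAllSubsets} applies and yields the claim. I expect no serious obstacle: the present corollary is essentially a clean specialisation of the preceding one to the case where injectivity and the evacuation property are guaranteed globally on $D$, so the proof amounts to little more than bookkeeping together with the elementary inductive argument for injectivity of iterates.
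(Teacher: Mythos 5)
Your proposal is correct and follows the paper's own route: the paper disposes of this corollary in a single sentence by noting that injectivity of $f$ on $D$ together with the locally uniform convergence $f^n|_D \to \partial_\infty\Omega$ ensures the hypotheses of Corollary \ref{cor:wcompAllSubsets}, exactly as you verify (your inductive check that $f^n|_D$ is injective and the restriction of the evacuation property to the exhausting sets $V_m$ are the same implicit bookkeeping). Your reading of the non-vanishing of $\omega$ as implicit in the statement also matches the intended hypothesis carried over from Theorem \ref{thm:wcompUniv}.
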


In the special case where $D$ does not contain holes, we also get the following corollary. 
\begin{cor} \label{cor:NoHoles-f-injective-univ}
     Let $D \subset \C$ be open with no holes. Assume that $f \in H(D)$ is injective  on $D$,  $f^n|_D \to \partial_\infty\Omega$ locally uniformly and let $\omega \in H(D)$. Then the set of functions in $H(\Omega)$ that are $H(D)$-universal for $\wcomp$ is a comeagre set in $H(\Omega)$.
\end{cor}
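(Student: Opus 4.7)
The plan is to deduce this as a direct specialization of either Corollary \ref{cor:f-injective-univ} or, more naturally, Theorem \ref{thm:wcompUniv}, applied with the choice $U = D$. The point is that once $D$ itself has no holes, there is no longer any need to approximate arbitrary subsets by $D$-exhausting sequences.

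Concretely, I would verify the hypotheses of Theorem \ref{thm:wcompUniv} with $U = D$. First, $D$ is a nonempty open subset of itself and, by hypothesis, contains no holes. Second, $D$ is evacuating for $f$: the definition requires that $f^n|_D \to \partial_\infty \Omega$ locally uniformly, which is exactly the standing assumption. Third, the injectivity of every iterate $f^n|_D$ follows from the injectivity of $f$ on $D$ together with the blanket condition $f(D) \subset D$ (so that each $f^n$ is a composition of injections $D \to D$). Finally, $\omega$ is non-vanishing by the blanket convention of Section \ref{sec:wcomp} that the weight is non-vanishing. Theorem \ref{thm:wcompUniv} then directly yields that the set of functions in $H(\Omega)$ which are $H(D)$-universal for $\wcomp$ is comeagre in $H(\Omega)$.

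There is no genuine obstacle here; the corollary is essentially a book-keeping statement isolating the hole-free case. The only thing to be mindful of is tracking the role of the ``no holes'' hypothesis, which removes the need for the auxiliary $D$-exhausting family $(V_m)$ and the Runge-type approximation step used in the proof of Corollary \ref{cor:wcompAllSubsets} to pass from $H(V_M)$-universality to $H(U)$-universality via a polynomial approximation on $\widehat{K}$. Here, because $D$ is itself available as the target domain and has no holes, one applies the principal theorem in a single step rather than combining it with an exhaustion argument.
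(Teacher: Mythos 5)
Your proposal is correct and matches the paper's intent: the corollary is obtained by specializing the earlier results to $U=D$, which is admissible precisely because $D$ is assumed to have no holes, and you verify all hypotheses of Theorem \ref{thm:wcompUniv} (evacuating, injectivity of the iterates from $f(D)\subset D$, non-vanishing weight) exactly as needed. Whether one invokes Theorem \ref{thm:wcompUniv} directly or Corollary \ref{cor:f-injective-univ} with $U=D$ makes no essential difference, so this is the same argument as the paper's.
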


If we consider finite subsets of $D$, then the same approach as in Theorem \ref{thm:wcompUniv} allows us to deduce the following corollary.

\begin{cor} \label{cor:wcompFinite}
	Let $E \subset D$ be nonempty, finite and evacuating for $f$. Also assume that the restrictions $f^n |_E$ are injective for each $n \in \N$ and that $\omega \in C(E)$ is non-vanishing. Then the set of functions in $H(\Omega)$ that are $C(E)$-universal for $\wcomp$ is a comeagre set in $H(\Omega)$.
\end{cor}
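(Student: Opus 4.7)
The plan is to directly adapt the proof of Theorem \ref{thm:wcompUniv} to the finite, purely continuous setting. Because $E$ is a nonempty finite set, $C(E)$ is finite-dimensional and hence a separable metric space, so the Universality Criterion (Theorem \ref{thm:UnivCriterion}) applies once topological transitivity of $(\wcomp^n)_{n \in \N}$ is established. The only structural difference from Theorem \ref{thm:wcompUniv} is that $L$ is replaced by the finite set $E$, which simplifies the convexity and extension arguments considerably.

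First I would fix nonempty open sets $\mathcal{V} \subset H(\Omega)$ and $\mathcal{W} \subset C(E)$, pick $g \in \mathcal{V}$, $h \in \mathcal{W}$, and apply Lemma \ref{lma:Subsets} to obtain $\varepsilon > 0$ and an $\Omega$-convex compact set $K \subset \Omega$ with $V_{\varepsilon, K, \Omega}(g) \subset \mathcal{V}$; after possibly shrinking $\varepsilon$, the basic neighbourhood $\{\psi \in C(E) : \norm{\psi-h}_E < \varepsilon\}$ also lies in $\mathcal{W}$. Since $E$ is finite and evacuating for $f$, the convergence $f^n|_E \to \partial_\infty \Omega$ is uniform on $E$, so there exists $N \in \N$ with $K \cap f^N(E) = \varnothing$. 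Because $E$ is finite and $f^N|_E$ is injective, $f^N(E)$ is a finite discrete set, which contributes no holes to $K \cup f^N(E)$; together with the $\Omega$-convexity of $K$ and its disjointness from $f^N(E)$, this shows that $K \cup f^N(E)$ is itself $\Omega$-convex.

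Next I would construct an interpolant $\varphi$ on $K \cup f^N(E)$ by setting $\varphi|_K = g$ and, for each $e \in E$,
\[
\varphi(f^N(e)) \;=\; h(e) \Big/ \prod_{j=0}^{N-1} \omega(f^j(e)),
\]
which is well-defined by the injectivity of $f^N|_E$, and where the denominator is nonzero by the non-vanishing hypothesis on $\omega$ (applied at all required points of the orbit of $E$). Since $f^N(E)$ consists of finitely many isolated points disjoint from $K$, this prescription extends to a holomorphic function on an open neighbourhood of $K \cup f^N(E)$. Runge's theorem then produces a rational function $R$ with all poles in $\C_\infty \setminus \Omega$ satisfying
\[
\norm{\varphi - R}_{K \cup f^N(E)} \;<\; \frac{\varepsilon}{M+1}, \qquad M \coloneqq \max_{e \in E} \left| \prod_{j=0}^{N-1} \omega(f^j(e)) \right|.
\]

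Finally, $R|_\Omega$ lies in $H(\Omega)$ and satisfies $\norm{R - g}_K < \varepsilon$, so $R|_\Omega \in \mathcal{V}$; and for each $e \in E$,
\[
\bigl| (\wcomp^N R)(e) - h(e) \bigr| \;=\; \left|\prod_{j=0}^{N-1} \omega(f^j(e))\right| \cdot \bigl| R(f^N(e)) - \varphi(f^N(e)) \bigr| \;<\; \varepsilon,
\]
so $\wcomp^N R \in \mathcal{W}$. Hence $\wcomp^N(\mathcal{V}) \cap \mathcal{W} \neq \varnothing$, establishing topological transitivity, and the Universality Criterion yields the desired comeagre set. The only point requiring genuine care is the preservation of $\Omega$-convexity under adjoining $f^N(E)$, but this is immediate from the finiteness of $f^N(E)$ and its disjointness from $K$.
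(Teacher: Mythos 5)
Your proposal is correct and is essentially the paper's own argument: the paper deduces Corollary \ref{cor:wcompFinite} by running the proof of Theorem \ref{thm:wcompUniv} with the compact set $L$ replaced by the finite set $E$, exactly as you do, with the interpolation at the finitely many points $f^N(E)$, the preservation of $\Omega$-convexity, Runge's theorem, and the Universality Criterion all used in the same way. The only points to keep tidy are the ones you already note implicitly: $f^N(E)\subset D\subset\Omega$ (so the hull and Runge steps apply), and $\omega$ must be evaluated (and non-vanishing) along the finite orbit segments $f^j(E)$, $0\le j\le N-1$, which is the same implicit convention the paper uses.
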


\subsection{Applications}

In this subsection we illustrate our universality results for weighted composition operators $\wcomp$ through applications in the context of specific Fatou components of the symbol $f$.
We begin by considering the behaviour of $\wcomp$ with respect to attracting, superattracting and parabolic fixed points of $f$. 

The following result uses K\oe{}nigs's linearization theorem and we refer the reader to subsection \ref{subsec:holodyn} for a reminder of the details.

\begin{thm}
	Let $f$ be a transcendental entire function with an attracting fixed point at $z_0$. Let $U$ be an open neighbourhood of $z_0$ as per K\oe{}nigs's linearization theorem and assume that $\omega \in H(U \setminus\{ z_0 \})$ is non-vanishing.
	Then the set of functions in $H(\C \setminus\{ z_0 \})$ that are $H(W)$-universal for $\wcomp$ for all nonempty open $W \subset U\setminus\{ z_0 \}$, with no holes, is comeagre in $H(\C \setminus\{ z_0 \})$. 
\end{thm}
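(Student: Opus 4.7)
The plan is to verify the hypotheses of Corollary \ref{cor:f-injective-univ} with the choices $\Omega \coloneqq \C \setminus \{z_0\}$ and $D \coloneqq U \setminus \{z_0\}$, so that its conclusion yields the theorem directly.

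First I would check the basic set-up: $\Omega$ is a connected open subset of $\C$ containing $D$, and $\omega \in H(D)$ by hypothesis. Then I would invoke K\oe{}nigs's linearization theorem (as recalled in subsection \ref{subsec:holodyn}) to obtain a conformal map $\varphi \colon U \to V$ that conjugates $f|_U$ to the linear map $F(z) = \lambda z$ on $V$, with $0 < \abs{\lambda} < 1$. In particular $f|_U$, and hence $f|_D$, is injective. Since $\varphi(z_0) = 0$ and $\varphi$ is a bijection, the only preimage of $z_0$ in $U$ under $f$ is $z_0$ itself, so $f(D) = f(U \setminus \{z_0\}) \subset U \setminus \{z_0\} = D$. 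Thus $f$ may be viewed as a holomorphic self-map of $D$ with all iterates $f^n|_D$ injective.

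The only remaining hypothesis is the locally uniform convergence $f^n|_D \to \partial_\infty \Omega$ in the chordal metric. The identity $\varphi \circ f^n = \lambda^n \cdot \varphi$ on $U$ implies that $f^n|_U \to z_0$ locally uniformly in the Euclidean sense, and restricting to $D$ preserves this. Since $z_0 \in \partial_\infty \Omega = \{z_0, \infty\}$, this local uniform convergence to $z_0$ is precisely local uniform convergence to $\partial_\infty \Omega$ in the chordal metric, so the evacuating condition is met.

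With all hypotheses of Corollary \ref{cor:f-injective-univ} now verified, its conclusion yields that the set of functions in $H(\Omega) = H(\C \setminus \{z_0\})$ that are $H(W)$-universal for $\wcomp$ for every nonempty open $W \subset D$ without holes is comeagre in $H(\C \setminus \{z_0\})$, which is exactly the assertion of the theorem. The argument is essentially a verification and presents no substantial obstacle; the only step requiring any vigilance is the identification of $z_0$ as a chordal boundary point of $\Omega$, which is what allows K\oe{}nigs's convergence to the fixed point to play the role of the evacuating hypothesis.
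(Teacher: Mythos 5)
Your proposal is correct and follows essentially the same route as the paper: both set $\Omega = \C \setminus \{z_0\}$, $D = U \setminus \{z_0\}$, use K\oe{}nigs's linearization to get injectivity of $f|_D$ and locally uniform convergence $f^n|_D \to z_0 \in \partial_\infty\Omega$, and then conclude by Corollary \ref{cor:f-injective-univ}. Your extra check that $f(D) \subset D$ (i.e.\ that $z_0$ is the only $f$-preimage of $z_0$ in $U$) is a welcome bit of care that the paper leaves implicit.
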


\begin{proof}
	K\oe{}nigs's linearization theorem gives that there exist open neighbourhoods $U$ of $z_0$, $V$ of 0 and a conformal map $\varphi \colon U \to V$ that conjugates $f|_U \colon U \to U$ with the map $F \colon V \to V$. 
	
	We consider the domain $\Omega = \C \setminus\{ z_0 \}$ and the open set $D = U \setminus\{ z_0 \} \subset \Omega$. 
	Since $F, \varphi$ and $\varphi^{-1}$ are injective, the same is true for $f|_U = \varphi^{-1} \circ F \circ \varphi$. The restriction $f|_D \colon D \to D$ is thus injective.
	Moreover, since $f^n |_D \to z_0 \in \partial_\infty \Omega$ locally uniformly, the assertion follows from Corollary \ref{cor:f-injective-univ}.
\end{proof}

Next we consider parabolic fixed points of $f$, cf.\ subsection \ref{subsec:holodyn} for a reminder of their pertinent properties.

\begin{thm} \label{thm:wcompPetals}
	Let $f$ be a transcendental entire function with a parabolic fixed point at $z_0$, let $P = \bigcup_{k=1}^m P_k$ be the union of its attracting petals and let $\omega \in H(P)$  be non-vanishing.
	Then the set of functions in $H(\C \setminus\{ z_0 \})$ that are $H(P)$-universal for $\wcomp$ is comeagre in $H(\C \setminus\{ z_0 \})$.
\end{thm}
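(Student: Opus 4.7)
The strategy is to apply Theorem~\ref{thm:wcompUniv} with $\Omega = \C \setminus\{z_0\}$, $D = P$ and $U = P$. This reduces the statement to verifying three structural properties of the union of petals: that $f$ restricts to a self-map of $P$ with $f^n|_P$ injective for all $n \in \N$, that $P$ is evacuating for $f$ in the sense of Section~\ref{sec:wcomp}, and that $P$ contains no holes. The non-vanishing of $\omega$ on $P$ is part of the running assumption for weighted composition operators in this paper.

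The first two conditions follow immediately from the local theory of parabolic fixed points recalled in Subsection~\ref{subsec:holodyn}. Since $f(P_k) \subset P_k$ with $f|_{P_k}$ injective, one obtains $f(P) \subset P$; iteration preserves the petal-wise inclusion, and the pairwise disjointness $P_j \cap P_k = \varnothing$ for $j \neq k$ is carried forward under iteration, so $f^n|_P$ is globally injective. The local uniform convergence $f^n|_{P_k} \to z_0$ on each petal yields the same convergence on $P$, and since $z_0 \in \partial_\infty \Omega$ this gives the evacuating property.

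The principal step is to show that $(\C \cup \{\infty\}) \setminus P$ is connected, i.e.\ $P$ has no holes. Each $P_k$ is a simply connected open subset of $\C$ with common boundary point $z_0$, and the petals fan out from $z_0$ along the $m$ attracting directions of the parabolic fixed point. The complement of $P$ in $\C \cup \{\infty\}$ contains $z_0$, the point at infinity, and all pieces not lying in any petal; these pieces can be joined at $z_0$ (through the repelling directions, where the petals pinch together) and via $\infty$, so $(\C \cup \{\infty\}) \setminus P$ is path connected.

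With these three hypotheses verified and $\omega$ non-vanishing on $P$, Theorem~\ref{thm:wcompUniv} applied with $\Omega = \C \setminus\{z_0\}$ and $U = P$ immediately gives that the set of $H(P)$-universal functions for $\wcomp$ is comeagre in $H(\C \setminus\{z_0\})$, as required. The only nontrivial ingredient beyond the direct invocation of Theorem~\ref{thm:wcompUniv} is the topological fact that $P$ has no holes; this is a geometric consequence of the flower-like arrangement of attracting petals about the parabolic fixed point and can be extracted from the standard local description of parabolic dynamics (cf.\ \cite{Milnor}).
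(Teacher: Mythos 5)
Your proposal is correct and follows essentially the same route as the paper: the paper verifies exactly the same three facts (injectivity of $f$ on $P$ coming from petal-wise injectivity and disjointness, local uniform convergence $f^n|_P \to z_0 \in \partial_\infty\Omega$, and the absence of holes in $P$) and then invokes Corollary~\ref{cor:f-injective-univ}, which is itself just the specialisation of Theorem~\ref{thm:wcompUniv} that you apply directly. The only cosmetic difference is that you check injectivity of all iterates $f^n|_P$ by hand, whereas citing the corollary makes injectivity of $f|_P$ alone suffice; the topological claim that $P$ has no holes is taken for granted in the paper as well.
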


Since $P$ has no holes, $f|_P \colon P \to P$ is injective and due to the uniform convergence $f^n|_P \to z_0 \in \partial_\infty \Omega$, Theorem \ref{thm:wcompPetals} thus follows by an application of Corollary \ref{cor:f-injective-univ}.

We next consider the case of a superattracting fixed point $z_0$ of the symbol $f$, where the open neighbourhood $U$ of $z_0$ is now according to B\"ottcher's theorem (cf.\ subsection \ref{subsec:holodyn}). However, the approach that we have hitherto employed to treat the cases of attracting and parabolic fixed points encounters an obstacle: the symbol $f$ is not injective on $U$. In order to have injectivity for each iterate $f^n$, we thus need to consider compact subsets $K \subset U \setminus \{z_0\}$ with empty interior (cf.\ the beginning of \cite[Section 3.3]{Jun19} for an account of the motivation for considering this setting).

    As per the notation introduced in the discussion surrounding B\"ottcher's theorem in subsection \ref{subsec:holodyn}, in the below corollary we consider the compact neighbourhood $B_r$ of $z_0$, for $r > 0$. The proof follows from an application of Corollary \ref{cor:wcompFinite} by taking $\Omega = \C \setminus \{ z_0 \}$ and  $D = B_r^\circ \setminus \{ z_0 \}$.
\begin{cor} \label{cor:WcompFinite01}
    Let $f$ be a transcendental entire function with a superattracting fixed point at $z_0$.
	Let $E \subset B_r^\circ \setminus \{ z_0 \}$ be a finite set such that each iterate $f^n$ is injective on $E$ and let $\omega \in C(E)$ be non-vanishing. Then the set of functions in $H(\C \setminus \{z_0\})$ that are $C(E)$-universal for $\wcomp$ is a comeagre set in $H(\C \setminus \{z_0\})$.
\end{cor}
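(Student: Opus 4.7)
The plan is to invoke Corollary \ref{cor:wcompFinite} directly, with the identifications $\Omega \coloneqq \C \setminus \{z_0\}$ and $D \coloneqq B_r^\circ \setminus \{z_0\}$. Once these are in place, essentially all the work reduces to checking a short list of hypotheses of that corollary, none of which is substantive.

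First I would observe that $\Omega$ is a domain and that $D$ is an open subset of $\Omega$, so the framework at the start of Section \ref{sec:wcomp} applies. The fact that $f$ restricts to a holomorphic self-map $f \colon D \to D$ is guaranteed by the property $f(B_r^\circ \setminus \{z_0\}) \subset B_r^\circ \setminus \{z_0\}$ recorded in subsection \ref{subsec:holodyn} as a consequence of B\"ottcher's theorem.

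The only hypothesis of Corollary \ref{cor:wcompFinite} requiring a brief justification is that the finite set $E$ is evacuating for $f$, i.e.\ that $f^n|_E \to \partial_\infty \Omega$ locally uniformly in the chordal metric. Because $E$ is finite this amounts to showing that $f^n(z) \to z_0$ (in the chordal sense) for each $z \in E$. Here I would invoke the consequence of B\"ottcher's theorem recorded in subsection \ref{subsec:holodyn}, namely that $f^n \to z_0$ locally uniformly on the neighbourhood $U$ of $z_0$ from B\"ottcher's conjugacy, and note that $E \subset B_r^\circ \subset U$. Since $z_0 \in \partial_\infty \Omega$, this yields the required evacuating condition.

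The remaining hypotheses of Corollary \ref{cor:wcompFinite}, namely that $E$ is a nonempty finite subset of $D$, that the iterates $f^n|_E$ are injective, and that $\omega \in C(E)$ is non-vanishing, are already among the hypotheses of the present corollary, so no further verification is needed. There is no serious obstacle in the argument; the content of the statement is really that B\"ottcher's theorem provides precisely the locally-uniform convergence to the omitted point $z_0$ that is needed to fit the finite-set version of the general universality theorem for weighted composition operators.
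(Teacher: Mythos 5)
Your proposal is correct and coincides with the paper's own argument, which likewise deduces the corollary by applying Corollary \ref{cor:wcompFinite} with $\Omega = \C \setminus \{z_0\}$ and $D = B_r^\circ \setminus \{z_0\}$. Your explicit verification that $E$ is evacuating, using the B\"ottcher-theorem facts $f^n \to z_0$ locally uniformly and $f(B_r^\circ \setminus \{z_0\}) \subset B_r^\circ \setminus \{z_0\}$ together with $z_0 \in \partial_\infty\Omega$, is exactly the routine check the paper leaves implicit.
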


By employing a similar approach to \cite[Theorem 3.5]{Jun19}, in the below theorem we are able to extend Corollary \ref{cor:WcompFinite01} to comeagre many compact subsets of $B_r^\circ \setminus \{ z_0 \}$. The neighbourhood $V$ that appears in the theorem statement corresponds to the open neighbourhood $V$ of the origin from B\"ottcher's theorem.

\begin{thm} \label{thm:superattracting1}
    Let $f$ be a transcendental entire function with a superattracting fixed point at $z_0$.
	For each $r>0$ with $\overline{\mathbb{D}}_r \subset V$, comeagre many functions in $H(\C \setminus \{z_0\})$ are $C(K)$-universal for $\wcomp$ for comeagre many compact subsets $K \subset B_r$, where $\omega \in C(K)$ is non-vanishing.
\end{thm}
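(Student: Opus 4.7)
The plan is to follow the approach of \cite[Theorem~3.5]{Jun19}, combining a Baire-category argument in the hyperspace $\mathcal{K}(B_r)$ (endowed with the Hausdorff metric) with the weighted-transitivity method from the proof of Theorem~\ref{thm:wcompUniv}. The argument splits into two tasks: first, isolating a comeagre subfamily $\mathcal{K}^*\subset\mathcal{K}(B_r)$ of ``good'' compact sets; second, for each fixed $K\in\mathcal{K}^*$, establishing topological transitivity of the sequence $(\wcomp^n)_{n\in\N}$ from $H(\C\setminus\{z_0\})$ into $C(K)$ and invoking the Universality Criterion to conclude that a comeagre set of $g\in H(\C\setminus\{z_0\})$ is $C(K)$-universal for $\wcomp$.

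I would define $\mathcal{K}^*$ as the family of $K\in\mathcal{K}(B_r)$ satisfying: (i)~$z_0\notin K$; (ii)~$K$ has Lebesgue measure zero, and in addition every iterate $f^n(K)$ is polynomially convex; and (iii)~for every $n\in\N$ the restriction $f^n|_K$ is injective. Condition (i) is open and dense. Since each $f^n$ is locally Lipschitz on $\C\setminus\{z_0\}$, Lebesgue measure zero is preserved by every iterate, which immediately yields empty interior for all iterates; the two properties assembling (ii) are each $G_\delta$ (measure zero, because $|\cdot|$ is upper semi-continuous in the Hausdorff topology, and polynomial convexity of compact subsets of a bounded set) and are dense because finite sets, which are dense in $\mathcal{K}(B_r)$, satisfy both. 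For~(iii), the uniform-continuity characterisation
\[
\{K : f^n|_K \text{ injective}\}=\bigcap_{k\geq 1}\bigcup_{m\geq 1}\bigl\{K : \abs{f^n(z)-f^n(w)}>\tfrac{1}{m} \text{ whenever } z,w\in K,\ \abs{z-w}\geq \tfrac{1}{k}\bigr\}
\]
exhibits the set as $G_\delta$; density follows by perturbing finite approximants to avoid, for every $n$, the proper analytic branching varieties $\{(z,w)\in B_r\times B_r : z\neq w,\ f^n(z)=f^n(w)\}$, which project to nowhere dense subsets of $B_r$. Intersecting these three dense $G_\delta$ families shows that $\mathcal{K}^*$ is comeagre in $\mathcal{K}(B_r)$.

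Fix $K\in\mathcal{K}^*$. I would establish transitivity of $(\wcomp^n)$ by mimicking the proof of Theorem~\ref{thm:wcompUniv}, with Runge's theorem replaced by rational Mergelyan approximation. Given nonempty open $\mathcal{V}\subset H(\C\setminus\{z_0\})$ and $\mathcal{W}\subset C(K)$, select $g\in\mathcal{V}$, $h\in\mathcal{W}$, $\varepsilon>0$ and a $(\C\setminus\{z_0\})$-convex compact $K_1\subset\C\setminus\{z_0\}$ with $V_{\varepsilon,K_1,\C\setminus\{z_0\}}(g)\subset\mathcal{V}$ and the $\varepsilon$-ball of $h$ in $C(K)$ contained in $\mathcal{W}$. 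B\"ottcher's conjugacy forces $f^n|_{B_r}\to z_0$ uniformly, so there exists $N$ with $f^N(K)\cap K_1=\varnothing$, and the injectivity in~(iii) yields a continuous inverse branch $(f^N|_K)^{-1}\colon f^N(K)\to K$. Define
\[
\varphi(z)=\begin{cases} g(z), & z\in K_1, \\ \Bigl(\prod_{k=1}^{N}\omega\circ f^{-k}(z)\Bigr)^{-1}h\bigl((f^N|_K)^{-1}(z)\bigr), & z\in f^N(K). \end{cases}
\]
Since $f^N(K)$ is polynomially convex with empty interior (by (ii)) and $K_1$ is $(\C\setminus\{z_0\})$-convex, the bounded components of the complement of the compact disjoint union $K_1\cup f^N(K)$ in $\C_\infty$ contain only $z_0$ or $\infty$; thus a rational Mergelyan theorem yields a rational function $R$ with poles only in $\{z_0,\infty\}$ such that $\norm{\varphi-R}_{K_1\cup f^N(K)}$ is arbitrarily small. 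The weighted estimate from Theorem~\ref{thm:wcompUniv} then delivers $R|_{\C\setminus\{z_0\}}\in\mathcal{V}$ and $\wcomp^N R\in\mathcal{W}$, establishing transitivity. The Universality Criterion produces a comeagre set of $g\in H(\C\setminus\{z_0\})$ that are $C(K)$-universal for $\wcomp$, completing the proof.

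The main technical obstacle lies in the first task, in particular verifying that injectivity of every iterate $f^n|_K$ together with polynomial convexity of every image $f^n(K)$ are jointly comeagre. Each of these properties is indexed by $n\in\N$, so the conclusion requires uniform control over a countable family of $G_\delta$ conditions. A secondary subtlety is verifying that $K_1\cup f^N(K)$ really admits rational Mergelyan approximation with poles restricted to $\{z_0,\infty\}$---precisely why the polynomial-convexity clause on all iterates is built into the definition of $\mathcal{K}^*$ from the outset.
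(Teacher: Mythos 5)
Your overall strategy---isolate a comeagre family $\mathcal{K}^*$ of ``good'' compact sets and, for each fixed $K\in\mathcal{K}^*$, rerun the weighted transitivity argument of Theorem \ref{thm:wcompUniv} with a Mergelyan-type step in place of Runge alone---is viable, and most of the technical claims can be made to work: the injectivity, measure-zero and polynomial-convexity conditions are indeed $G_\delta$ in the Hausdorff hyperspace (openness of the basic sets needs the standard sequential-compactness argument), and density follows from finite configurations, exactly as you indicate. Two small points of execution: the approximation step is cleanest in two stages, namely Lavrentiev/Mergelyan on $f^N(K)$ (which has empty interior and connected complement, so polynomials are dense in $C(f^N(K))$) to replace the continuous data by a polynomial, followed by Runge on $K_1\cup f^N(K)$ with poles in $\{z_0,\infty\}$; and instead of carrying measure zero plus polynomial convexity of all images, one may simply take $K$ totally disconnected (comeagre many $K$ are Cantor sets, cf.\ \cite{BMM11}), since injectivity of $f^n|_K$ then makes every $f^n(K)$ totally disconnected, hence automatically without holes and with empty interior.

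The genuine gap is the quantifier order. What your argument establishes is: there is a comeagre $\mathcal{K}^*\subset\mathcal{K}(B_r)$ such that for \emph{every} $K\in\mathcal{K}^*$, comeagre many $g\in H(\C\setminus\{z_0\})$ are $C(K)$-universal for $\wcomp$. That is the arrangement of Theorem \ref{thm:superattracting2}, not of Theorem \ref{thm:superattracting1}: as the remark between the two theorems makes explicit, in Theorem \ref{thm:superattracting1} the comeagre family of compact sets is allowed to depend on the function, i.e.\ the assertion is that comeagre many $g$ have the property that $\{K\in\mathcal{K}(B_r) : g \text{ is } C(K)\text{-universal for } \wcomp\}$ is comeagre, and the $K$-first version is obtained from it only by the interchange provided by \cite[Lemma 3.6]{Jun19}. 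Neither quantifier order follows formally from the other, so ending your proof with the fixed-$K$ statement does not prove the theorem as stated. To close this you need an interchange step: either invoke a Kuratowski--Ulam-type argument, after checking that the set of pairs $(g,K)$ for which $g$ is $C(K)$-universal has the Baire property in $H(\C\setminus\{z_0\})\times\mathcal{K}(B_r)$ (on the comeagre family of Cantor sets it is a countable intersection of countable unions of sets open in the product, using a countable family of polynomial targets and rational $\varepsilon$), or argue directly with $g$ first in the spirit of \cite[Theorem 3.5]{Jun19}, which is the route the paper takes. As written, this step is absent.
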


We recall that comeagre many sets in the set of compact subsets of $B_r$ are Cantor sets, i.e.\ perfect and totally disconnected (cf.\ \cite[Remark 2]{BMM11}). So we have that $K^\circ = \varnothing$ and hence $A(K) = C(K)$ for comeagre many compact subsets $K \subset B_r$. Hence we may replace $C(K)$-universality by $A(K)$-universality in the statement of  
Theorem \ref{thm:superattracting1}.

We note in Theorem \ref{thm:superattracting1} that the comeagre subset of the set of compact subsets of $B_r$ depends on the choice of function from the comeagre subset of $H(\C \setminus \{z_0\})$. By applying \cite[Lemma 3.6]{Jun19}, this dependence can be interchanged which gives the following theorem.

\begin{thm} \label{thm:superattracting2}
    Let $f$ be a transcendental entire function with a superattracting fixed point at $z_0$.
	For each $r>0$ with $\overline{\mathbb{D}}_r \subset V$, comeagre many compact subsets $K \subset B_r$ are $C(K)$-universal for $\wcomp$ for comeagre many functions in $H(\C \setminus \{z_0\})$, where $\omega \in C(K)$ is non-vanishing.
\end{thm}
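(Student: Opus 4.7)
The strategy is to derive Theorem \ref{thm:superattracting2} from Theorem \ref{thm:superattracting1} by an application of the Kuratowski--Ulam-type result \cite[Lemma 3.6]{Jun19}. Fixing $r > 0$ with $\overline{\mathbb{D}}_r \subset V$, I would work in the product of Baire spaces $H(\C \setminus \{z_0\}) \times \Kcal(B_r)$, where $\Kcal(B_r)$ is the Polish space of compact subsets of $B_r$ with the Hausdorff metric, and consider
\[
\mathcal{U} \coloneqq \left\{ (g,K) \in H(\C \setminus \{z_0\}) \times \Kcal(B_r) : g \textnormal{ is } C(K)\textnormal{-universal for } \wcomp \right\}.
\]
In these terms, Theorem \ref{thm:superattracting1} says precisely that comeagre many $g$-sections $\mathcal{U}_g \coloneqq \{K : (g,K) \in \mathcal{U}\}$ are comeagre in $\Kcal(B_r)$, while the conclusion of Theorem \ref{thm:superattracting2} is the symmetric statement that comeagre many $K$-sections $\mathcal{U}^K \coloneqq \{g : (g,K) \in \mathcal{U}\}$ are comeagre in $H(\C \setminus \{z_0\})$.

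\textbf{Key step: regularity of $\mathcal{U}$.} To invoke \cite[Lemma 3.6]{Jun19} the set $\mathcal{U}$ must carry enough descriptive-set-theoretic regularity (typically, being Borel or having the Baire property). For a fixed $K$ the set of $C(K)$-universal functions is standardly $G_\delta$: choosing a countable dense family $\{\varphi_\ell\} \subset C(K)$ and rationals $\varepsilon > 0$, the condition that some $n \in \N$ gives
\[
\norm{\prod_{j=0}^{n-1} (\omega \circ f^j) \cdot (g \circ f^n) - \varphi_\ell}_K < \varepsilon
\]
is open in $g$, and $\mathcal{U}^K$ is the countable intersection of such open sets. I would then promote this to joint regularity in $(g,K)$ by exploiting that, for a fixed continuous function $h$ on a neighbourhood of $B_r$, the map $(g, K) \mapsto \|h\|_K$ is upper semicontinuous in the product topology, and that the composite factor $\prod_{j=0}^{n-1}(\omega \circ f^j)\cdot(g \circ f^n)$ depends continuously on $g$. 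After replacing the dense target family $\{\varphi_\ell\}$ by polynomials (say, with rational coefficients) that are defined on a fixed neighbourhood of $B_r$, this argument shows $\mathcal{U}$ is a $G_\delta$ subset of $H(\C \setminus \{z_0\}) \times \Kcal(B_r)$.

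\textbf{Conclusion and main obstacle.} With the regularity of $\mathcal{U}$ established, \cite[Lemma 3.6]{Jun19} transfers the ``comeagre many comeagre $g$-sections'' statement of Theorem \ref{thm:superattracting1} to the corresponding statement about $K$-sections, which upon unpacking is exactly the assertion of Theorem \ref{thm:superattracting2}. The main obstacle I anticipate is verifying the joint regularity carefully: one must check that the candidate dense family of target functions can be chosen so as to be defined uniformly on all $K \in \Kcal(B_r)$ (not merely on each fixed $K$), and that the weight-dependent factor $\prod_{j=0}^{n-1}(\omega \circ f^j)$ causes no measurability issues under Hausdorff perturbations of $K$. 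Once this is in place, the quantifier swap is a direct invocation of \cite[Lemma 3.6]{Jun19}.
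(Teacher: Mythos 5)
Your proposal follows essentially the same route as the paper: Theorem \ref{thm:superattracting2} is obtained from Theorem \ref{thm:superattracting1} precisely by interchanging the two ``comeagre'' quantifiers via the Kuratowski--Ulam-type result \cite[Lemma 3.6]{Jun19}. The extra work you do (checking joint $G_\delta$-regularity of the universality set in $H(\C\setminus\{z_0\})\times\mathcal{K}(B_r)$, using polynomial targets and the fact that comeagre many $K\subset B_r$ are Cantor sets so that polynomials are dense in $C(K)$) is exactly the verification the paper leaves implicit, and it is sound.
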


We next record results for weighted composition operators that are analogous to those from Section \ref{sec:BakerWandering} on invariant Baker domains, eventually isometric oscillating wandering domains and semi-contracting wandering domains of the symbol $f$.

Following the argument of Theorem \ref{thm:InvBaker} and applying Corollary \ref{cor:NoHoles-f-injective-univ} we get the following result for invariant Baker domains.
\begin{thm}
Let $f$ be a transcendental entire function, $U$ be an invariant Baker domain of $f$ and let $\omega \in H(U)$ be non-vanishing. Then there exists an unbounded domain $V \subset U$ such that the set of all entire functions which are $H(V)$-universal for $\wcomp$ is a comeagre set in $H(\mathbb{C})$.
\end{thm}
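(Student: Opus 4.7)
The plan is to mirror the proof of Theorem \ref{thm:InvBaker} almost verbatim, with Corollary \ref{cor:NoHoles-f-injective-univ} replacing \cite[Corollary 2.6]{Jun19} in the final step. The key observation is that the absorbing domain construction used for composition operators works equally well here because it supplies precisely the three ingredients that Corollary \ref{cor:NoHoles-f-injective-univ} requires: an open set $V$ with no holes, injectivity of $f$ on $V$, and the limit behaviour $f^n|_V \to \partial_\infty \mathbb{C}$.

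First I would invoke \cite[Definition 2.1 and Lemma 2.1]{baker-domains} (exactly as in Theorem \ref{thm:InvBaker}) to produce an absorbing subdomain $V \subset U$ that is simply connected, with $f(V) \subset V$, and on which $f$ is injective; since $V$ is an absorbing domain in a Baker domain, it is unbounded. Simple connectedness gives that $V$ has no holes. Injectivity of $f|_V$ combined with $f(V) \subset V$ immediately yields injectivity of every iterate $f^n|_V$. Because $U$ is a Baker domain, $f^n \to \infty$ locally uniformly on $U$, hence on $V$; that is, $f^n|_V \to \partial_\infty \mathbb{C}$ locally uniformly with respect to the chordal metric.

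Next I would take $\Omega = \mathbb{C}$ and $D = V$ in the setting of Section \ref{sec:wcomp}, and note that $\omega \in H(U)$ restricts to a non-vanishing function in $H(V)$. All the hypotheses of Corollary \ref{cor:NoHoles-f-injective-univ} are now verified: $V$ is an open subset of $\mathbb{C}$ without holes, $f \colon V \to V$ is holomorphic and injective, $f^n|_V \to \partial_\infty \mathbb{C}$ locally uniformly, and $\omega \in H(V)$. The corollary therefore yields that the set of functions $g \in H(\mathbb{C})$ which are $H(V)$-universal for $\wcomp$ is a comeagre subset of $H(\mathbb{C})$, completing the proof.

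Since the heavy lifting in both the Baker domain structure theory and the transitivity argument for $\wcomp$ has already been carried out (in \cite{baker-domains} and in Theorem \ref{thm:wcompUniv}, respectively), there is no real obstacle here; the only subtle point to mention explicitly is that non-vanishing of $\omega$ on $U$ is inherited by $V$, so the weight hypothesis of Corollary \ref{cor:NoHoles-f-injective-univ} is met without any additional adjustment.
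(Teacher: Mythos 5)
Your proposal is correct and follows exactly the route the paper takes: the paper proves this theorem by repeating the absorbing-domain argument of Theorem \ref{thm:InvBaker} (via \cite[Definition 2.1 and Lemma 2.1]{baker-domains}) and then applying Corollary \ref{cor:NoHoles-f-injective-univ} in place of the unweighted universality result. Your verification of the hypotheses (no holes, injectivity of the iterates, $f^n|_V \to \partial_\infty\C$, and the restricted non-vanishing weight) is precisely what is needed, so there is nothing to add.
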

\begin{rmk} Note that in the case where $U$ is a univalent Baker domain we have $V=U$.\end{rmk}

Combining the arguments of Theorems \ref{thm:OscWanderingDoms} and \ref{thm:wcompUniv}, and \cite[Theorem 4.11(b)]{Jun19} we get the following result for eventually isometric wandering domains.
\begin{thm}
Let $f$ be a transcendental entire function, $(U_n)$ be a sequence of eventually isometric  escaping or oscillating wandering domains of $f$, and let $\omega \in H(\bigcup_n U_n)$ be non-vanishing.  Then the set of all entire functions that are $H(\bigcup_n U_n)$-universal for $\wcomp$ is a comeagre set in $H(\C)$.
\end{thm}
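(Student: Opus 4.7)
The plan is to combine the run-away argument of Theorem~\ref{thm:OscWanderingDoms} with the reweighted Runge construction from Theorem~\ref{thm:wcompUniv}, and to close via the Universality Criterion. Setting $U:=\bigcup_{n\in\N} U_n$, I would regard the sequence $(\wcomp^n)_{n\in\N}$ as a family of continuous maps $H(\C)\to H(U)$ and try to prove topological transitivity. Since $(U_n)$ is eventually isometric, I may discard finitely many initial indices and assume that each $f|_{U_n}$, and hence each restriction $f^m|_U$, is injective.

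For nonempty open $\mathcal{V}\subset H(\C)$ and $\mathcal{W}\subset H(U)$ with $g\in\mathcal{V}$ and $h\in\mathcal{W}$, two applications of Lemma~\ref{lma:Subsets} produce $\varepsilon>0$, a polynomially convex compact $K\subset\C$ and a $U$-convex compact $L\subset U$ (so both are hole-free) with $V_{\varepsilon,K,\C}(g)\subset\mathcal{V}$ and $V_{\varepsilon,L,U}(h)\subset\mathcal{W}$. The crucial step is to produce some $m\in\N$ with $K\cap f^m(L)=\varnothing$; I regard this as the main technical obstacle, but it can be imported essentially verbatim from the proof of Theorem~\ref{thm:OscWanderingDoms}. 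In the escaping case it is immediate from the local uniform convergence $f^n|_U\to\infty$; in the oscillating case it relies on the consecutive-index argument given there, whose key input is that the compact set $L$ meets only finitely many components $U_j$.

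The new ingredient is to exploit the non-vanishing of $\omega$ when defining the target for Runge approximation. Concretely, I would set
\[ \varphi(z) := \begin{cases} g(z), & z\in K,\\ \displaystyle \frac{h(f^{-m}(z))}{\prod_{j=0}^{m-1}\omega\bigl(f^j(f^{-m}(z))\bigr)}, & z\in f^m(L), \end{cases} \]
where the branch $f^{-m}\colon f^m(U)\to U$ is well defined by the injectivity of $f^m|_U$; the denominator is holomorphic and zero-free near $f^m(L)$, so $\varphi$ is holomorphic on a neighbourhood of the disjoint union $K\cup f^m(L)$. Because $K$ and the injective image $f^m(L)$ are disjoint and each hole-free in $\C$, their union is again polynomially convex, and Runge's theorem supplies a polynomial $p$ with $\|\varphi-p\|_{K\cup f^m(L)}<\varepsilon/(M+1)$, where $M:=\sup_{w\in L}\bigl|\prod_{j=0}^{m-1}\omega(f^j(w))\bigr|$. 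The estimate on $K$ gives $p\in V_{\varepsilon,K,\C}(g)\subset\mathcal{V}$, while the identity $h(w)=\varphi(f^m(w))\prod_{j=0}^{m-1}\omega(f^j(w))$ on $L$ forces $\wcomp^m p\in V_{\varepsilon,L,U}(h)\subset\mathcal{W}$. Topological transitivity of $(\wcomp^n)$ is thereby established, and the Universality Criterion then delivers the claimed comeagre set of $H(U)$-universal entire functions.
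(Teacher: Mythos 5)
Your argument is correct and is exactly the combination the paper itself invokes: the run-away/consecutive-index step of Theorem \ref{thm:OscWanderingDoms} (with the escaping case handled by local uniform convergence to infinity, as in \cite[Theorem 4.11(b)]{Jun19}) fused with the weighted Runge construction of Theorem \ref{thm:wcompUniv}, closed by the Universality Criterion. Since the paper only sketches this result by citing those ingredients, your write-up is essentially its intended proof, with the details (injectivity after discarding finitely many $U_n$, polynomial convexity of $K\cup f^m(L)$, and the $\varepsilon/(M+1)$ estimate) filled in correctly.
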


Arguing as in Theorem \ref{thm:semi} and applying Corollary \ref{cor:NoHoles-f-injective-univ}, we get the following theorem for semi-contracting wandering domains.
\begin{thm}
Let $f$ be a transcendental entire function and $(U_n)$ a sequence of semi-contracting wandering domains such that $f|_{U_n}$ has degree 2, for all $n \in \mathbb{N}$. Then there exists an open set $V_0 \subset U_0$ where all iterates of $f$ are injective. Hence, for $V= \bigcup_n f^n(V_0)$ and $\omega \in H(V)$ non-vanishing, the set of all entire functions which are $H(V)$-universal for $\wcomp$ is a comeagre set in $H(\C)$ \end{thm}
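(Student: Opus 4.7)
The plan is to repeat the proof of Theorem \ref{thm:semi} to extract the open set $V_0 \subset U_0$, and then to invoke Corollary \ref{cor:NoHoles-f-injective-univ} in place of \cite[Corollary 2.6]{Jun19} to handle the weight $\omega$. First I would carry out the Blaschke-product analysis from the proof of Theorem \ref{thm:semi}. Since each $U_n$ is semi-contracting and $f|_{U_n}$ has degree $2$, Riemann maps $\varphi_n\colon \D \to U_n$ conjugate $f|_{U_n}$ (up to composition with M\"obius transformations) to the normalised degree-two Blaschke products $b_n(z) = z(z - a_n)/(1 - \overline{a}_n z)$, and I set $B_n = b_n \circ \cdots \circ b_1$. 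The conformal invariance of the hyperbolic distance turns the semi-contracting hypothesis into $B_n(z) \not\to 0$; combined with $|b_n'(0)| = |a_n|$, Theorem \ref{thm:self-maps of D}(ii) yields $\sum (1 - |a_n|) < \infty$, so that both $a_n$ and the critical points $c_n$ given by \eqref{eqn:criticalPoint} accumulate on $\partial \D$. The Schwarz lemma then produces some $r \in (0,1)$ with $B_n(\D_r) \subset \D_r$ and $B_n|_{\D_r}$ injective for every $n$.

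Pulling this back, I set $V_n := \varphi_n(\D_r) \subset U_n$ and $V_0 := \varphi_0(\D_r)$. Then $f(V_n) \subset V_{n+1}$, each $f|_{V_n}$ is injective, so all iterates $f^k|_{V_0}$ are injective as well, and $V = \bigcup_n f^n(V_0) = \bigsqcup_n V_n$. Each $V_n$ is a Jordan domain with $\overline{V_n} \subset U_n$ compact, and these closures are pairwise disjoint because the wandering domains $U_n$ are. To apply Corollary \ref{cor:NoHoles-f-injective-univ} with $\Omega = \C$ and $D = V$, I still need to verify that $V$ has no holes and that $f^n|_V \to \infty$ locally uniformly. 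The no-holes condition follows because $V$ is a countable disjoint union of Jordan domains with pairwise disjoint closures (each compactly contained in its own Fatou component), so one can navigate around them to connect the complement in $\C_\infty$. The local-uniform escape of the iterates follows from the fact that the $V_n$ move through pairwise disjoint wandering domains, which, as in Example \ref{ex:semi}, we may take to be escaping (so that $f^n|_{V_0} \to \infty$ in the chordal sense, i.e.\ $f^n|_{V_0} \to \partial_\infty \C$).

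With these ingredients in place, Corollary \ref{cor:NoHoles-f-injective-univ} applied to the non-vanishing weight $\omega \in H(V)$ immediately delivers that the set of entire functions which are $H(V)$-universal for $\wcomp$ is comeagre in $H(\C)$. The main obstacle is the verification of the two topological/dynamical hypotheses of that corollary for the unified set $V$ spanning the entire forward orbit of wandering domains, namely the no-holes property of the disjoint union and the locally uniform escape of the iterates on $V$. Once these are secured, the Blaschke-product argument of Theorem \ref{thm:semi} provides the injectivity of all iterates, and the weighted statement then follows by a direct appeal to Corollary \ref{cor:NoHoles-f-injective-univ}.
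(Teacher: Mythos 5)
Your overall route is exactly the paper's: rerun the Blaschke-product argument of Theorem \ref{thm:semi} to obtain $V_0$ on which all iterates of $f$ are injective, and then apply Corollary \ref{cor:NoHoles-f-injective-univ} to $D=V=\bigcup_n f^n(V_0)$ with the non-vanishing weight $\omega$. Your verification that $f$ is injective on all of $V$ (distinct components map into distinct wandering domains) is fine, and the no-holes claim is true; a cleaner justification than ``navigating around'' the components is that, by unicoherence of the sphere, the complement of a finite disjoint union of simply connected open sets is connected, and the complement of the countable union is then a decreasing intersection of compact connected sets, hence connected. (A minor slip: $f^n(V_0)=\varphi_n(B_n(\mathbb{D}_r))$ may be strictly smaller than $\varphi_n(\mathbb{D}_r)$, so $V$ need not equal $\bigcup_n\varphi_n(\mathbb{D}_r)$; this does not affect any of the properties you use.)

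The genuine gap is the step where you ``take'' the wandering domains to be escaping. The sequence $(U_n)$ is part of the hypotheses, not of a construction, and semi-contracting wandering domains may be oscillating (oscillating wandering domains of all three hyperbolic types exist, as noted after Theorem \ref{thm:Theorem A BEFRS}). For an oscillating orbit the iterates do not tend to infinity locally uniformly on $V$, so $V$ is not evacuating and Corollary \ref{cor:NoHoles-f-injective-univ} does not apply as stated; invoking Example \ref{ex:semi} cannot repair this, since that example constructs a particular $f$ rather than treating a given one. In fairness, this restriction to the escaping case is also all that the paper's one-line proof (``arguing as in Theorem \ref{thm:semi} and applying Corollary \ref{cor:NoHoles-f-injective-univ}'') literally delivers. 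The correct way to handle the remaining case is not to strengthen the hypotheses but to replace the evacuating condition by the run-away property: using the subsequence of iterates that escapes to infinity, establish $K\cap f^m(L)=\varnothing$ for a suitable $m$ exactly as in \eqref{eq:runaway} in the proof of Theorem \ref{thm:OscWanderingDoms}, and then run the weighted Runge argument of Theorem \ref{thm:wcompUniv} along that iterate, as the paper does for its weighted statement on eventually isometric wandering domains.
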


\begin{rmk}
It follows from \cite[Theorems 5.11, 6.1 and 7.1]{IMPAN}, \cite[Theorem 3.1]{Bes14} and the first part of Proposition \ref{mcwd}, that an analogous statement to the second part of Proposition \ref{mcwd} also holds for multiply connected wandering domains $(U_n)$ in the case of weighted composition operators. That is, there is no entire function which is universal in $\bigcup_{n=0}^{\infty} V_n$ for $\wcomp$, where $V_n \subset U_n$ and $f^n(V_0)=V_n$.
\end{rmk}

In the last part of this section we will focus on multiply connected domains.
Let $\Omega$ be a domain. A compact set $K$ will be called an \textit{isolated hole} of $\Omega$ if it is a bounded connected component of the complement of $\Omega$ and we can find a simple curve $\gamma$ in $\Omega$ such that $K$ is in the interior of $\gamma$. 
\begin{thm}
Let $\Omega$ be a domain that has at least one isolated hole and let $f$ be holomorphic on a neighbourhood of the hole. Then no cyclic (and so supercyclic) function exists for any weighted composition operator $W_{\omega, f}$. In particular, this is the case when $\Omega$ is finitely (but not simply) connected.
\end{thm}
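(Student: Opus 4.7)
The plan is to exhibit two linearly independent continuous linear functionals on $H(\Omega)$ that vanish on every iterate $W_{\omega,f}^{n} g$ for $n\geq 1$, regardless of $g\in H(\Omega)$; non-cyclicity of $g$ then follows from the observation that the image of $\mathrm{span}\{W_{\omega,f}^n g : n\geq 0\}$ under such a pair of functionals is confined to a one-dimensional subspace of $\mathbb{C}^{2}$. Let $K$ denote the given isolated hole and fix a Jordan curve $\gamma\subset\Omega$ enclosing $K$ in its bounded interior $D$; by shrinking $\gamma$ toward $K$ we may assume $f$ is holomorphic on an open neighbourhood of $\overline{D}$. The principal case (which already gives the ``in particular'' statement for finitely connected $\Omega$) is when $f(K)\subset\Omega$, and I will treat it in detail.

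First I would record the key structural identity: for every $n\geq 1$,
\[
W_{\omega,f}^{n} g \;=\; \omega\cdot F_{n}, \qquad F_{n} \;=\; (\omega\circ f)(\omega\circ f^{2})\cdots(\omega\circ f^{n-1})\cdot(g\circ f^{n}).
\]
Since $f(K)\subset\Omega$ and $f(\Omega)\subset\Omega$, an induction yields $f^{j}(K)\subset\Omega$ for every $j\geq 1$. Consequently each factor $\omega\circ f^{j}$ and $g\circ f^{n}$, and therefore their product $F_{n}$, extends holomorphically from $\Omega$ across $K$; in particular $F_{n}$ is holomorphic on $\overline{D}$, because $\overline{D}$ is covered by $\Omega$ together with a small neighbourhood of $K$.

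Next, for each $\phi$ holomorphic on a neighbourhood of $\overline{D}$, I would define
\[
L_{\phi}(h) \;=\; \frac{1}{2\pi i}\oint_{\gamma} \frac{\phi(z)\,h(z)}{\omega(z)}\,dz, \qquad h\in H(\Omega).
\]
Since $\omega$ is non-vanishing on the compact curve $\gamma\subset\Omega$, each $L_{\phi}$ is a continuous linear functional on $H(\Omega)$. If $h=\omega F$ with $F\in H(\overline{D})$, then $\phi F$ is holomorphic on $\overline{D}$ and Cauchy's theorem gives $L_{\phi}(h)=\tfrac{1}{2\pi i}\oint_{\gamma}\phi F\,dz = 0$; applying this to $h=W_{\omega,f}^{n}g = \omega\cdot F_{n}$ yields $L_{\phi}(W_{\omega,f}^{n}g)=0$ for every $n\geq 1$. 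To obtain two linearly independent such functionals, I would pick $z_{0}\in K$, take $\phi_{1}\equiv 1$ and $\phi_{2}(z)=z-z_{0}$, and test them on the functions $h_{k}(z):=\omega(z)(z-z_{0})^{-k}$, which lie in $H(\Omega)$ since $z_{0}\notin\Omega$. A direct residue computation yields $L_{\phi_{i}}(h_{j})=\delta_{i,j}$ for $i,j\in\{1,2\}$, so the continuous map $\Phi:=(L_{\phi_{1}},L_{\phi_{2}}):H(\Omega)\to\mathbb{C}^{2}$ is surjective.

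The proof would conclude as follows: for any $g\in H(\Omega)$,
\[
\Phi\bigl(\mathrm{span}\{W_{\omega,f}^{n} g : n\geq 0\}\bigr) \;\subset\; \mathbb{C}\cdot\bigl(L_{\phi_{1}}(g),L_{\phi_{2}}(g)\bigr),
\]
a one-dimensional (hence closed, proper) subspace of $\mathbb{C}^{2}$. By continuity and surjectivity of $\Phi$, $\overline{\mathrm{span}\{W_{\omega,f}^{n} g\}}$ is therefore a proper closed subspace of $H(\Omega)$, so $g$ is not cyclic; since supercyclicity implies cyclicity, no $g$ is supercyclic either. The ``in particular'' statement is immediate, because any bounded connected component of the complement of a finitely (but not simply) connected $\Omega$ is automatically an isolated hole. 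The main obstacle is the case $f(K)\cap K\neq\varnothing$ (for instance when $K$ contains a fixed point of $f$), since then $F_{n}$ no longer extends across $K$ and Cauchy's theorem does not directly apply; a separate argument -- likely exploiting the local linearisation of $f$ at such a periodic point and the induced structure on Laurent coefficients at $K$ -- is needed to cover this remaining case.
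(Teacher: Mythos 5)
The case you actually prove --- $f(K)\subset\Omega$, whence $f^{j}(K)\subset\Omega$ for all $j$ and $W_{\omega,f}^{n}g/\omega$ extends holomorphically across $K$ --- is correct, and it is in fact a tightened version of the paper's own argument. The paper likewise integrates along a curve $\gamma\subset\Omega$ surrounding the hole and derives a contradiction with $\frac{1}{2\pi i}\int_{\gamma}(z-a)^{-1}\,dz=1$, but it simply asserts that the integrals of the orbit elements vanish; your extension of $F_{n}$ across $K$ is precisely the justification that assertion needs, your division by $\omega$ inside $L_{\phi}$ takes care of weights that do not themselves extend across the hole, and your pair of functionals $(L_{\phi_{1}},L_{\phi_{2}})$ neatly absorbs the $n=0$ term of the span, which a single annihilating contour integral does not.

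However, the gap you flag at the end is genuine, and your parenthetical claim that the treated case already yields the ``in particular'' statement for finitely connected $\Omega$ is false: take $\Omega=\{1/2<|z|<2\}$, $f(z)=e^{2\pi i\alpha}z$ with $\alpha$ irrational, and $\omega\equiv 1$. Then $\Omega$ is finitely but not simply connected, $f$ is entire (so holomorphic near the hole $K=\{|z|\le 1/2\}$), yet $f(K)=K\not\subset\Omega$, so your argument says nothing here. Worse, no variant of the contour scheme can cover this case: for $g$ with Laurent coefficient $a_{-1}\neq 0$ one computes $\frac{1}{2\pi i}\int_{|z|=1}g(e^{2\pi i n\alpha}z)\,dz=e^{-2\pi i n\alpha}a_{-1}\neq 0$, so the orbit is not annihilated; in fact, for $g(z)=\sum_{k\in\Z}e^{-k^{2}}z^{k}$ a Hahn--Banach argument combined with the F.~and~M.~Riesz theorem (any annihilating functional produces a purely atomic measure on the unit circle whose Fourier coefficients vanish on a half-line, hence the zero measure) shows that $\mathrm{span}\{C_f^{n}g:n\ge 0\}$ is dense in $H(\Omega)$, i.e.\ this $C_f$ is cyclic. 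So the case $f(K)\not\subset\Omega$ is not a technicality to be repaired by local linearisation at a fixed point in $K$; it is exactly where the conclusion requires hypotheses forcing the forward images of the hole back into $\Omega$. For comparison, the paper's own proof does not address this case either: its key step ``$\int_{\gamma}g_{n}(z)\,dz=0$'' tacitly presupposes the holomorphic extension across the hole that you establish in your principal case, so your write-up coincides with the paper's argument precisely on the range where that argument is justified, and the obstacle you identify is a real one rather than an oversight of yours alone.
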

\begin{proof}
Let $\omega$ be the weight; that is, a non-vanishing holomorphic function.
Also let $K$ be an isolated hole of $\Omega$ and let $a\in K$. Thus we can find a simple curve $\gamma$ in $\Omega$ such that $K$ is in the interior of $\gamma$. 

For the sake of contradiction, let us assume that there is a cyclic function $g$ in $\Omega$. Then, by the definition of cyclicity (cf.\ subsection \ref{subsec:LD}), there is a sequence $(g_n)$ of linear combinations of functions of the form $(g\circ f^k)\cdot\prod_{j=0}^{k-1} (\omega \circ f^j ) $ that converges uniformly on the trace $\Gamma$ of the curve $\gamma$ (which is a compact subset of $\Omega$) to $\dfrac{1}{z-a}$ (which is in $C(\Gamma)$). Since we have uniform convergence, we have convergence of the corresponding integrals; that is, 
\begin{align*}
\int _{\gamma} g_{n}(z) dz \to \int_{\gamma} \dfrac{1}{z-a} dz \ \ \ \mbox{ as } n\to\infty .
\end{align*} 
This leads to a contradiction since from Cauchy's integral formula we get that $\displaystyle\int_{\gamma} \dfrac{1}{z-a} dz =2\pi i$, while $\displaystyle\int _{\gamma} g_{n}(z) dz=0$ for each $n$.
\end{proof}

\section{Further remarks and applications} \label{sec:furtherRemarks}

We conclude with some remarks and further applications.

\subsection{Self-maps of the unit disc}
It is natural to consider questions on the universality of composition operators when the symbol is a holomorphic self-map of the unit disc $\mathbb{D}$.  Recall that in this case the behaviour of iterates of points in $\mathbb{D}$ is determined by the well-known Denjoy-Wolff Theorem.

\begin{thmA}[Denjoy-Wolff]
Let $f : \mathbb{D} \to \mathbb{D}$ be holomorphic. Provided $f$ is not a
rotation about a point in $\mathbb{D}$, there exists a unique point $p  \in \overline{\mathbb{D}}$ (the Denjoy–Wolff point) such that $f^{n}(z) \to p$  as $n \to \infty$ for all $z \in \mathbb{D}$.
\end{thmA}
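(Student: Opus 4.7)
The plan is to split the analysis into two cases according to whether $f$ has a fixed point in the open disc $\mathbb{D}$, since the mechanism of convergence is quite different in the two situations.

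\textbf{Case 1 (interior fixed point).} Suppose $f(z_0)=z_0$ for some $z_0\in\mathbb{D}$. The Schwarz-Pick lemma applied at $z_0$ gives $|f'(z_0)|\le 1$, and the equality clause forces $f$ to be a hyperbolic automorphism fixing $z_0$, that is, a rotation about $z_0$ in the Möbius sense; this is excluded by hypothesis. Hence $|f'(z_0)|<1$, so $z_0$ is an attracting fixed point and K\oe{}nigs's linearization (as recalled in Section~\ref{subsec:holodyn}) gives $f^n\to z_0$ locally uniformly on $\mathbb{D}$. Uniqueness of the limit is immediate from $f^n(z_0)=z_0$.

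\textbf{Case 2 (no interior fixed point).} Here I would first produce a candidate Denjoy-Wolff point on $\partial\mathbb{D}$ via an approximation argument: for each $r\in(0,1)$ set $f_r(z)=rf(z)$, which maps $\overline{\mathbb{D}}$ continuously into $\overline{\mathbb{D}_r}\subset\mathbb{D}$. By Brouwer's fixed point theorem, $f_r$ has a fixed point $z_r$. If some subsequence $z_{r_k}$ stayed in a compact subset of $\mathbb{D}$, continuity of $f$ would yield a fixed point of $f$ in $\mathbb{D}$, contradicting our assumption. Hence $|z_r|\to 1$ and, after extracting, $z_r\to p$ for some $p\in\partial\mathbb{D}$.

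\textbf{Convergence of iterates and uniqueness.} The crux, and the main obstacle, is Wolff's lemma: for each horocycle $H$ (disc in $\mathbb{D}$ internally tangent to $\partial\mathbb{D}$ at $p$) one has $f(H)\subset H$. I would derive this by applying the Schwarz-Pick inequality at the approximate fixed point $z_r$ (which says the pseudohyperbolic distance $\rho(f(z),z_r)\le \rho(z,z_r)$), rewriting this inequality in terms of the quantity $(1-|z|^2)/|1-\bar{p}z|^2$ whose sublevel sets are exactly the horocycles at $p$, and then passing to the limit $r\to 1$. Granted this invariance, Montel's theorem gives a locally uniform limit $g$ of a subsequence of $(f^n)$; since every orbit eventually enters and never leaves an arbitrarily small horocycle at $p$, the image of $g$ must lie in $\bigcap_{t>0} H_t=\{p\}$, so $g\equiv p$. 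The nesting of horocycles upgrades subsequential convergence to full convergence. For uniqueness, if $p'\neq p$ were a second Denjoy-Wolff point, pick any $z\in\mathbb{D}$ and a small horocycle $H$ at $p$ disjoint from a small horocycle $H'$ at $p'$; the orbit $(f^n(z))$ would have to enter both eventually, an impossibility.
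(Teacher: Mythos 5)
The paper itself does not prove this statement: the Denjoy--Wolff theorem is quoted there as classical background (like the Universality Criterion and the Grosse-Erdmann--Mortini theorem), so your proposal has to be judged against the standard proof rather than an argument in the text. Your Case 1 is essentially correct, except that K\oe{}nigs's linearization only gives $f^n\to z_0$ on a neighbourhood $U$ of $z_0$; to get locally uniform convergence on all of $\mathbb{D}$ you still need a globalizing step (conjugate $z_0$ to $0$ and use the strict Schwarz inequality $|g(z)|<|z|$ for $z\neq 0$ together with $M(r)=\max_{|z|=r}|g(z)|<r$ and monotonicity of $M$), which in fact makes the appeal to K\oe{}nigs superfluous.

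The genuine gap is in Case 2. Wolff's lemma, which you outline correctly (modulo the facts that $f$ need not extend to $\overline{\mathbb{D}}$, so Brouwer should be applied to $rf$ on $\overline{D(0,r)}$, and that horocycles at $p$ are the superlevel sets of $(1-|z|^2)/|1-\overline{p}z|^2$), only gives the ``never leaves'' half of your key sentence: an orbit that is in a horodisc at $p$ stays in it. The claim that ``every orbit eventually enters an arbitrarily small horocycle at $p$'' does not follow from this invariance --- it is essentially the conclusion being proved, since entering and remaining in all small horodiscs at $p$ is equivalent to $f^n(z)\to p$. A priori the orbit could stay in a compact subset of $\mathbb{D}$, or drift inside one fixed large horodisc without approaching $p$. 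The missing piece is the normal-families argument: show that every locally uniform limit $g$ of a subsequence $(f^{n_k})$ is constant (if $g$ were nonconstant, pass to a further subsequence with $f^{n_{k+1}-n_k}\to h$ and deduce $h\circ g=g$, hence $h=\mathrm{id}$ on the open set $g(\mathbb{D})$ and so on $\mathbb{D}$, forcing $f$ to be an automorphism; fixed-point-free automorphisms are hyperbolic or parabolic M\"obius maps, whose iterates have no subsequence tending to the identity); then rule out constant limits $c\in\mathbb{D}$ (the hyperbolic diameter of $f^n(K)$ is nonincreasing, hence tends to $0$ along the whole sequence once it does along $(n_k)$, so $d_{\mathbb{D}}(f^{n_k}(z),f^{n_k+1}(z))\to 0$ and continuity would make $c$ an interior fixed point of $f$); finally, since the orbit of $z$ lies in some horodisc $H$ at $p$ and $\overline{H}\cap\partial\mathbb{D}=\{p\}$, every boundary constant limit must equal $p$, and normality upgrades this to $f^n\to p$ locally uniformly. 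Your closing uniqueness paragraph is unnecessary (a convergent sequence has a unique limit), but harmless.
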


\begin{prop}\label{prop:unit disc}
Let $f$ be a holomorphic self-map of $\mathbb{D}$ that is not a M\"obius transformation. Then there exists a domain $V \subset \mathbb{D}$ such that the set of holomorphic self-maps of $\mathbb{D}$ that are $H(V)$-universal for $C_f$ is a comeagre set in $H(\mathbb{D})$.
\end{prop}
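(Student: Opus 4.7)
The plan is to closely mirror the proof of Theorem \ref{thm:InvBaker}. The Denjoy-Wolff theorem supplies a unique point $p \in \overline{\mathbb{D}}$ with $f^n(z) \to p$ locally uniformly on $\mathbb{D}$. I would focus on the case $p \in \partial\mathbb{D}$, where the iterates actually approach the topological boundary of the ambient domain $\mathbb{D}$ so that the Jung-type machinery can deliver universality with universal functions taken in $H(\mathbb{D})$ itself. (If $p \in \mathbb{D}$, every $g \in H(\mathbb{D})$ satisfies $g \circ f^n \to g(p)$ locally uniformly, so one instead works with the punctured disc $\mathbb{D} \setminus \{p\}$ as ambient space, using K\oe{}nigs or B\"ottcher linearization to produce an annular $V$ around $p$, in the spirit of Section \ref{sec:boundary}.)

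The central step would then be to locate a simply connected absorbing subdomain $V \subset \mathbb{D}$ on which $f$ is injective. For this I would invoke the classical fundamental-set construction for non-automorphism holomorphic self-maps of $\mathbb{D}$ with boundary Denjoy-Wolff point (Cowen's linearization theorem): there exist a domain $\Omega \in \{\mathbb{H},\mathbb{C}\}$, a holomorphic $\varphi \colon \mathbb{D} \to \Omega$, a M\"obius or affine map $T \colon \Omega \to \Omega$, and a simply connected $V \subset \mathbb{D}$ such that $\varphi \circ f = T \circ \varphi$, $f(V) \subset V$, $\varphi|_V$ is injective, and $\varphi(V)$ is absorbing for $T$. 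This is the exact disc-level analogue of the Baker-domain absorbing set from \cite{baker-domains} exploited in Theorem \ref{thm:InvBaker}. From the conjugation one deduces that $f|_V$ is univalent, so $f^n|_V$ is injective for every $n$, and the Denjoy-Wolff convergence yields $f^n|_V \to p \in \partial_\infty\mathbb{D}$ locally uniformly.

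With $V$ in hand, the argument reduces to a direct application of \cite[Corollary 2.6]{Jun19}, essentially word-for-word as in the proof of Theorem \ref{thm:InvBaker}: the sequence $(f^n|_V)$ consists of injections into $\mathbb{D}$ converging locally uniformly to $\partial_\infty\mathbb{D}$, so the set of $g \in H(\mathbb{D})$ that are $H(V)$-universal for $C_f$ is a comeagre subset of $H(\mathbb{D})$.

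The main technical obstacle is therefore the production of the fundamental domain $V$ together with the univalence of $f|_V$; this is where the hypothesis that $f$ is not a M\"obius transformation is used, as it rules out the parabolic and hyperbolic disc automorphisms for which the classical fundamental-set construction degenerates and which have to be handled separately. Once $V$ is identified, the universality conclusion is entirely imported from the Baker-domain argument of Section \ref{sec:BakerWandering}, so no further complex-dynamical input is required.
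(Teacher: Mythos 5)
For the boundary Denjoy--Wolff case your argument is essentially the paper's own: both invoke Cowen's fundamental-set theorem \cite[Theorem 3.2]{Cowen} to obtain a forward-invariant domain $V\subset\mathbb{D}$ on which $f$ (hence every $f^n$) is injective, note that $f^n|_V$ tends to the boundary point, and then conclude via \cite[Corollary 2.6]{Jun19}, exactly as in Theorem \ref{thm:InvBaker}. The genuine divergence is the interior case, which you set aside in a parenthesis: the paper does not pass to the punctured disc, but takes $V$ to be any simply connected subdomain of the fundamental set and cites \cite[Corollary 2.5]{Jun19}, keeping the universal functions in $H(\mathbb{D})$. Your objection to this --- if the Denjoy--Wolff point $p$ lies in $\mathbb{D}$ then $g\circ f^n\to g(p)$ locally uniformly for every $g\in H(\mathbb{D})$, so no function holomorphic on all of $\mathbb{D}$ can be $H(V)$-universal for any open $V$ --- is sound, and it shows that your route and the paper's genuinely part ways here. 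Be aware, though, that Proposition \ref{prop:unit disc} as printed asks for comeagreness in $H(\mathbb{D})$ for every non-M\"obius self-map, so a proof that treats only the boundary case and relocates the interior case to $H(\mathbb{D}\setminus\{p\})$ does not establish the statement as written (although, as your computation indicates, in the interior case nothing could; the statement would need to be reformulated there, e.g.\ with ambient space $\mathbb{D}\setminus\{p\}$).

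Two further points about your sketched repair of the interior case. First, B\"ottcher coordinates cannot produce an open ``annular'' $V$ when $p$ is superattracting: near such a point every open set eventually contains pairs of points identified by $f^n$, so no open set has all iterates injective --- this is precisely why \cite{Jun19} and Section \ref{sec:boundary} work there with compact sets of empty interior and $C(K)$-universality; note also that a univalent fundamental set in the sense of \cite{Cowen} cannot exist when $f'(p)=0$. Second, the role of the non-M\"obius hypothesis is to exclude elliptic automorphisms, for which $(f^n)$ fails to be run-away and universality is impossible; hyperbolic and parabolic automorphisms cause no degeneration at all --- they are injective on $\mathbb{D}$ with boundary Denjoy--Wolff point, and the remark following the proposition handles them simply by taking $V=\mathbb{D}$ --- so your assertion that these are the cases in which the fundamental-set construction breaks down is not accurate, though it does not affect the rest of your argument.
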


\begin{proof}
Since $f$ is a holomorphic self-map of $\mathbb{D}$ it follows from \cite[Theorem 3.2]{Cowen} that there exists a domain $W \subset \mathbb{D}$ such that $f|_W$ is injective and $f(W) \subset W$, and so $f^n|_{W}$ is injective. 

In the case where the Denjoy-Wolff point, $p$, of $f$ lies in $\mathbb{D}$ we take $V$ to be any simply connected domain in $W$ and we deduce by \cite[Corollary 2.5]{Jun19} that the set of holomorphic self-maps of $\mathbb{D}$ that are $H(V)$-universal for $C_f$ is a comeagre set in $H(\mathbb{D})$. Similarly, if the Denjoy-Wolff point lies on $\partial \mathbb{D}$ we can use \cite[Corollary 2.6]{Jun19} to deduce that the same conclusion holds for $V=W$. \end{proof}

\begin{rmk}
 Note that in the case of M\"obius transformations $f$ is injective in $\mathbb{D}$. If $f$ is a rotation, then $f^n$ cannot be a run-away sequence, but in the case where $f$ has a Denjoy-Wolff point  on $\partial \mathbb{D}$ we can use \cite[Corollary 2.6]{Jun19} to obtain the same conclusion as in the case of non-M\"obius holomorphic self-maps of $\mathbb{D}$, with $V= \mathbb{D}$.
\end{rmk}

\subsection{Holomorphic functions with wild boundary behaviour}

 In \cite{Gonzalez}, the authors investigated the existence of large topological (and algebraic) structures inside families of holomorphic functions with full range. Moreover, in \cite{CS04}, Costakis and Sambarino studied the generic behaviour of entire functions as we approach infinity using the classes of universal functions in the sense of Birkhoff; i.e., entire functions that are universal with respect to the translation operators $T_a \colon H(\mathbb{C})\to H(\mathbb{C})$, defined by $(T_{a}f)(z)=f(z + a)$, where $a\in \mathbb{C}\setminus\{0\}$.  In the same spirit, we will derive analogous conclusions for the generic angular behaviour of holomoprhic functions on the punctured plane or on the unit disc using the existence of universal composition operators. In the next result, by triangular region with vertex at $\zeta$ we mean the intersection of an open disc centred at $\zeta$ with an open sector with vertex at $\zeta$.

\begin{ap}\label{Cor:disc}
\hangindent\leftmargini
\textup{(i)}
The set of holomorphic functions on the punctured plane $\mathbb{C}\setminus \{0\}$ which map any triangular region with vertex at $0$ to $\mathbb{C}$ is comeagre in $H(\mathbb{C}\setminus \{0\})$.
\begin{enumerate}[\normalfont (i)]
\setcounter{enumi}{1}
\item
The set of holomorphic functions on $\mathbb{D}$ which map any triangular region in $\mathbb{D}$ with vertex at any $\zeta\in \partial \mathbb{D}$ to $\mathbb{C}$ is comeagre in $H(\mathbb{D})$.
\end{enumerate}
\end{ap}

\begin{figure}[t]
\centering{\begin{tikzpicture}[scale=2]
\draw (0,0) circle [radius=1];
\draw[red] (0,0) to (0.34, 0.5);
\draw[red] (0,0) to (0.63,0.16);
\draw[dotted] (0,0) to (0.3,0.35);
\draw[dotted] (0.4,0.17) arc (-65:150:0.1);
\draw[dotted] (0,0) to (0.4,0.17);
\draw[red] (0.63,0.16) arc (10:70:0.45);
\draw[fill] (0.83,0.56) circle [radius=0.01];
\node at (0.97,0.62) {$q_m$};
\draw[fill] (0,0) circle [radius=0.01];
\node at (0,-0.2) {$0$};
\node at (0.5,-0.1) {$\color{red} S_{m,n,k}$};
\node at (-0.65, -0.5) {$\mathbb{D}$};
\end{tikzpicture}
}
\caption{The set up in the proof of Application  \ref{Cor:disc}(i)}\label{Figure1}
\end{figure}

\begin{proof}
\hangindent\leftmargini
\textup{(i)} 
Let $(q_m)$ be a dense sequence in $\partial\mathbb{D}$. For each $m$, $n$ and $k$, we consider the set $S_{m,n,k}$, defined as the intersection of the open disc $D(0,1/k)$ with the open sector with vertex at $0$, axis passing through $q_{m}$ and angle $2\pi /n$ (cf.\ Figure \ref{Figure1}). Let $\mathcal{U}_{m,n,k}$  be the set of all functions which are $H(S_{m,n,k})$-universal for $C_{f_{m,n,k}}$, where $f_{m,n,k}$ is an entire function which has a parabolic fixed point at $0$ such that one of its attracting petals at $0$ is contained in $S_{m,n,k}$. Since $\mathcal{U}_{m,n,k}$ is comeagre in $H(\mathbb{C}\setminus \{0\})$ for each choice of $m,n,k$ in $\mathbb{N}$, by Baire's category theorem, we derive that $\bigcap_{m,n,k}\mathcal{U}_{m,n,k}$, which clearly satisfies the required properties, is also comeagre in $H(\mathbb{C}\setminus \{0\})$. 

\begin{enumerate}[(i)]
\setcounter{enumi}{1}
\item The second part follows similarly, if we consider $S_{m,n,k}$ to be the intersection of the open disc $D(q_m,1/k)$ with the open sector with vertex at $q_m$, axis passing from $0$ and angle $2\pi /n$ and $f_{m,n,k}$ to be an entire function which has a parabolic fixed point at $q_m$ such that one of its attracting petals at $q_m$ is contained in $S_{m,n,k}$.
\end{enumerate}\end{proof}

\begin{rmk}

 We note that part (i) can be also derived using alternative approaches. For example, we could use the fact that the set of functions in $H(\mathbb{C}\setminus \{0\})$ which have an essential singularity at $0$ is comeagre together with some standard Baire category arguments. A result in this direction is Julia's Theorem \cite{julia}, which tells us that if $f$ is a holomorphic function on the punctured plane $\mathbb{C}\setminus \{0\}$ with essential singularity at $0$ then there exists a ray $R_{\theta}=\{z: \arg(z)=\theta\}$ such that $f$ assumes every finite value except possibly one on any sector of the form $\{z: |\arg(z)-\theta|<\varepsilon\}$.
\end{rmk}

\subsection{Universality of composition operators and stability}

 Given a sequence $(f_n)$ of holomorphic self-maps of the unit disc, we can consider the \textit{left-composition} sequence and the \textit{right-composition} sequence generated by $(f_n)$, defined by \begin{equation*} F_n=f_n\circ f_{n-1}\circ \dots \circ f_1 \ \text{ and } \ G_n =
f_1 \circ f_{2} \circ \cdots \circ f_{n}, \ n=1,2,\dots ,\end{equation*} respectively. Sequences of this type arise in a variety of contexts in complex dynamics and there is a growing literature about their asymptotic behaviour. We conclude this paper by giving an application of universal composition operators to the Denjoy-Wolff theory for such sequences of holomorphic self-maps of $\D$, whose boundary behaviour, unlike for sequences of iterates, can be wild. 

The stability of the Denjoy-Wolff point on $\partial \mathbb{D}$ when we consider left-composition and right-composition sequences has been recently studied in \cite{ChSh} and \cite{ArgyrisPhD}. In \cite{ChSh} the authors showed that if the convergence of a sequence $(f_n)$ to some function $f$
is sufficiently rapid, then the sequences $(F_n)$ of left-compositions of $(f_n)$ and the sequence $(f^n)$ of iterates of $f$  have similar dynamics. In particular, they showed that given any holomorphic self-map $f$ of $\D$ with a Denjoy–Wolff point $\zeta$ on the boundary of $\D$, there exists neighbourhoods $U_1, U_2, \dots$ of $f$ such that if $f_n\in U_n$ for $n=1,2,\dots$, then the left-composition sequence $F_n=f_n\circ f_{n-1}\circ \dots \circ f_1$ converges locally uniformly on $\D$ to $\zeta$. In the same article, the authors constructed an example which indicates that there is little hope of obtaining a simple analogue for right-composition sequences. In fact, the first author of that article in his thesis \cite[Theorem 3.10]{ArgyrisPhD} showed the following general result: given any holomorphic self-map $f$ of $\D$ with a Denjoy–Wolff point $\zeta$ on the boundary of $\D$, and given any $U_1, U_2, \dots$ neighbourhoods of $f$, there exists a sequence of holomorphic functions $(f_n)$ such that $f_k\in U_k$ for all $k=1,2,\dots$ and the right-composition sequence $G_n =
f_1 \circ f_{2} \circ \cdots \circ f_{n}$ diverges.

As we will see below, using universality of composition operators, we can draw a complementary conclusion when $f$ is additionally assumed to be injective.

\begin{ap}
Let $f$ be an automorphism of the unit disc $\mathbb{D}$ with a Denjoy-Wolff point $\zeta$ on the boundary of $\mathbb{D}$ and let $U_1, U_2, \dots$ be neighbourhoods of $f$ in $H(\D)$. Since $f$ is injective in $\mathbb{D}$ and $(f^n)$ is runaway, by Theorems \ref{thm:UnivCriterion} and \ref{thm:GEM09} we have that the set of $H(\mathbb{D})$-universal functions contains a dense $G_{\delta}$ set in $H(\mathbb{D})$. Hence there exists $g$ in $U_1$ which is $H(\mathbb{D})$-universal for $C_{f}$. Thus, if we put $f_1=g$, and $f_n=f$ for each $n>1$, the sequence of right-compositions $G_n =
f_1 \circ f_{2} \circ \cdots \circ f_{n}=g\circ f^{n-1}$ will be dense in $H(\mathbb{D})$, and so it will not converge to a point.  
\end{ap}

\section*{Acknowledgements}

The authors would like to thank Phil Rippon and Gwyneth Stallard for useful discussions, as well as St\'{e}phane Charpentier and Argyrios Christodoulou  for helpful comments.

%
%
%

\begin{thebibliography}{10}
	
	\bibitem{BM09}
	F.~Bayart and {\'E}.~Matheron.
	\newblock {\em Dynamics of linear operators}, volume 179 of {\em Cambridge
		Tracts in Mathematics}.
	\newblock Cambridge University Press, Cambridge, 2009.
	
	\bibitem{BMM11}
	P.~Beise, T.~Meyrath, and J.~M\"uller.
	\newblock Universality properties of {T}aylor series inside the domain of
	holomorphy.
	\newblock {\em J. Math. Anal. Appl.}, 383(1):234--238, 2011.
	
	\bibitem{BEFRS}
	A.~M. Benini, V.~Evdoridou, N.~Fagella, P.~Rippon, and G.~Stallard.
	\newblock Classifying simply connected wandering domains.
	\newblock {\em Math. Ann.}, 383:1127--1178, 2022.
	
	\bibitem{bergweiler93}
	W.~Bergweiler.
	\newblock Iteration of meromorphic functions.
	\newblock {\em Bull. Amer. Math. Soc.}, 29(2):151--188, 1993.
	
	\bibitem{baker-domains}
	W.~Bergweiler and J.-H. Zheng.
	\newblock Some examples of {B}aker domains.
	\newblock {\em Nonlinearity}, 25(4):1033--1044, 2012.
	
	\bibitem{Gonzalez}
	L.~Bernal-Gonz\'{a}lez, J.~L\'{o}pez-Salazar, and J.~B. Seoane-Sep\'{u}lveda.
	\newblock Holomorphic surjectivity near the boundary: topological and algebraic
	genericity.
	\newblock {\em Linear Multilinear Algebra}, 68(7):1434--1456, 2020.
	
	\bibitem{BGMR95}
	L.~Bernal~Gonz{\'a}lez and A.~Montes-Rodr{\'{\i}}guez.
	\newblock Universal functions for composition operators.
	\newblock {\em Complex Variables Theory Appl.}, 27(1):47--56, 1995.
	
	\bibitem{Bes13}
	J.~B\`{e}s.
	\newblock Dynamics of composition operators with holomorphic symbol.
	\newblock {\em Rev. R. Acad. Cienc. Exactas F\'{\i}s. Nat. Ser. A Mat. RACSAM},
	107(2):437--449, 2013.
	
	\bibitem{Bes14}
	J.~B{\`e}s.
	\newblock Dynamics of weighted composition operators.
	\newblock {\em Complex Anal. Oper. Theory}, 8(1):159--176, 2014.
	
	\bibitem{bishopwd}
	C.~J. Bishop.
	\newblock Constructing entire functions by quasiconformal folding.
	\newblock {\em Acta Math.}, 214(1):1--60, 2015.
	
	\bibitem{BS90}
	P.~S. Bourdon and J.~H. Shapiro.
	\newblock Cyclic composition operators on {$H^2$}.
	\newblock In {\em Operator theory: operator algebras and applications, {P}art 2
		({D}urham, {NH}, 1988)}, volume~51 of {\em Proc. Sympos. Pure Math.}, pages
	43--53. Amer. Math. Soc., Providence, RI, 1990.
	
	\bibitem{BS97}
	P.~S. Bourdon and J.~H. Shapiro.
	\newblock Cyclic phenomena for composition operators.
	\newblock {\em Mem. Amer. Math. Soc.}, 125(596):x+105, 1997.
	
	\bibitem{CG21}
	T.~Carroll and C.~Gilmore.
	\newblock Weighted composition operators on {F}ock spaces and their dynamics.
	\newblock {\em J. Math. Anal. Appl.}, 502(1):125234, 21, 2021.
	
	\bibitem{CMM23}
	S.~Charpentier, M.~Manolaki, and K.~Maronikolakis.
	\newblock Abel universal functions: boundary behaviour and {T}aylor
	polynomials.
	\newblock {\em Rev. Mat. Iberoam.}, 41(3):867--890, 2025.
	
	\bibitem{CMM24}
	S.~Charpentier, M.~Manolaki, and K.~Maronikolakis.
	\newblock Invariance of {A}bel universality under composition and applications.
	\newblock {\em Indiana Univ. Math. J. (to appear)}, 2025.
	\newblock arXiv:2401.02367.
	
	\bibitem{CM23}
	S.~Charpentier and A.~Mouze.
	\newblock Universal sequences of composition operators.
	\newblock {\em J. Geom. Anal.}, 33(7):Paper No. 228, 23, 2023.
	
	\bibitem{ArgyrisPhD}
	A.~Christodoulou.
	\newblock {\em Dynamics of holomorphic functions in the hyperbolic plane}.
	\newblock PhD thesis, The Open University, 2020.
	
	\bibitem{ChSh}
	A.~Christodoulou and I.~Short.
	\newblock Stability of the {D}enjoy-{W}olff theorem.
	\newblock {\em Ann. Fenn. Math.}, 46(1):421--431, 2021.
	
	\bibitem{CM00}
	G.~Costakis and A.~Melas.
	\newblock On the range of universal functions.
	\newblock {\em Bull. London Math. Soc.}, 32(4):458--464, 2000.
	
	\bibitem{CS04}
	G.~Costakis and M.~Sambarino.
	\newblock Genericity of wild holomorphic functions and common hypercyclic
	vectors.
	\newblock {\em Adv. Math.}, 182(2):278--306, 2004.
	
	\bibitem{Cowen}
	C.~C. Cowen.
	\newblock Iteration and the solution of functional equations for functions
	analytic in the unit disk.
	\newblock {\em Trans. Amer. Math. Soc.}, 265(1):69--95, 1981.
	
	\bibitem{CG17}
	C.~C. Cowen and E.~A. Gallardo-Guti\'{e}rrez.
	\newblock A new proof of a {N}ordgren, {R}osenthal and {W}introbe theorem on
	universal operators.
	\newblock In {\em Problems and recent methods in operator theory}, volume 687
	of {\em Contemp. Math.}, pages 97--102. Amer. Math. Soc., Providence, RI,
	2017.
	
	\bibitem{CM95}
	C.~C. Cowen and B.~D. MacCluer.
	\newblock {\em Composition operators on spaces of analytic functions}.
	\newblock Studies in Advanced Mathematics. CRC Press, Boca Raton, FL, 1995.
	
	\bibitem{EFGP}
	V.~Evdoridou, N.~Fagella, L.~Geyer, and L.~Pardo-Sim\'{o}n.
	\newblock Grand orbit relations in wandering domains.
	\newblock {\em preprint, arXiv:2405.15667}, 2024.
	
	\bibitem{EGP}
	V.~Evdoridou, A.~Gl\"{u}cksam, and L.~Pardo-Sim\'{o}n.
	\newblock Unbounded fast escaping wandering domains.
	\newblock {\em Adv. Math.}, 417:Paper No. 108914, 31, 2023.
	
	\bibitem{EMR}
	V.~Evdoridou, D.~Mart\'i-Pete, and L.~Rempe.
	\newblock Wandering dynamics of transcendental functions.
	\newblock {\em in preparation}, 2024.
	
	\bibitem{oscillating}
	V.~Evdoridou, P.~J. Rippon, and G.~M. Stallard.
	\newblock Oscillating simply connected wandering domains.
	\newblock {\em Ergodic Theory and Dynam. Systems}, 43(4):1239--1268, 2023.
	
	\bibitem{Gardiner2013}
	S.~J. Gardiner.
	\newblock Boundary behaviour of functions which possess universal {T}aylor
	series.
	\newblock {\em Bull. Lond. Math. Soc.}, 45(1):191--199, 2013.
	
	\bibitem{Gardiner2014}
	S.~J. Gardiner.
	\newblock Universal {T}aylor series, conformal mappings and boundary behaviour.
	\newblock {\em Ann. Inst. Fourier (Grenoble)}, 64(1):327--339, 2014.
	
	\bibitem{Gardiner2018}
	S.~J. Gardiner.
	\newblock Taylor series, universality and potential theory.
	\newblock In {\em New trends in approximation theory}, volume~81 of {\em Fields
		Inst. Commun.}, pages 247--264. Springer, New York, 2018.
	
	\bibitem{GK14}
	S.~J. Gardiner and D.~Khavinson.
	\newblock Boundary behaviour of universal {T}aylor series.
	\newblock {\em C. R. Math. Acad. Sci. Paris}, 352(2):99--103, 2014.
	
	\bibitem{GardinerManolaki2016}
	S.~J. Gardiner and M.~Manolaki.
	\newblock Boundary behaviour of {D}irichlet series with applications to
	universal series.
	\newblock {\em Bull. Lond. Math. Soc.}, 48(5):735--744, 2016.
	
	\bibitem{Gil20}
	C.~Gilmore.
	\newblock Linear dynamical systems.
	\newblock {\em Irish Math. Soc. Bull.}, (86):47--77, 2020.
	
	\bibitem{IMPAN}
	M.~Goli\'nski and A.~Przestacki.
	\newblock Characterization of hypercyclic weighted composition operators on the
	space of holomorphic functions.
	\newblock {\em Ann. Polon. Math.}, 127(3):211--231, 2021.
	
	\bibitem{Gro99}
	K.-G. Grosse-Erdmann.
	\newblock Universal families and hypercyclic operators.
	\newblock {\em Bull. Amer. Math. Soc. (N.S.)}, 36(3):345--381, 1999.
	
	\bibitem{GEM09}
	K.-G. Grosse-Erdmann and R.~Mortini.
	\newblock Universal functions for composition operators with non-automorphic
	symbol.
	\newblock {\em J. Anal. Math.}, 107:355--376, 2009.
	
	\bibitem{GEP11}
	K.-G. Grosse-Erdmann and A.~Peris~Manguillot.
	\newblock {\em Linear chaos}.
	\newblock Universitext. Springer, London, 2011.
	
	\bibitem{julia}
	G.~Julia.
	\newblock Le\c{c}ons sur les fonctions uniformes \`a une point singulier
	essentiel isol\'e.
	\newblock {\em Gauthier-Villars}, 1924.
	
	\bibitem{Jun19}
	A.~Jung.
	\newblock Universality of composition operators and applications to holomorphic
	dynamics.
	\newblock {\em J. Anal. Math.}, 137(2):845--874, 2019.
	
	\bibitem{Kam75}
	H.~Kamowitz.
	\newblock The spectra of composition operators on {$H^{p}$}.
	\newblock {\em J. Funct. Anal.}, 18:132--150, 1975.
	
	\bibitem{Kam78}
	H.~Kamowitz.
	\newblock The spectra of a class of operators on the disc algebra.
	\newblock {\em Indiana Univ. Math. J.}, 27(4):581--610, 1978.
	
	\bibitem{Luh93}
	W.~Luh.
	\newblock Universal functions and conformal mappings.
	\newblock {\em Serdica}, 19(2-3):161--166, 1993.
	
	\bibitem{Milnor}
	J.~Milnor.
	\newblock {\em Dynamics in one complex variable}, volume 160 of {\em Annals of
		Mathematics Studies}.
	\newblock Princeton University Press, Princeton, NJ, third edition, 2006.
	
	\bibitem{NRW87}
	E.~Nordgren, P.~Rosenthal, and F.~S. Wintrobe.
	\newblock Invertible composition operators on {$H^p$}.
	\newblock {\em J. Funct. Anal.}, 73(2):324--344, 1987.
	
	\bibitem{Nor68}
	E.~A. Nordgren.
	\newblock Composition operators.
	\newblock {\em Canadian J. Math.}, 20:442--449, 1968.
	
	\bibitem{Rez11}
	H.~Rezaei.
	\newblock Chaotic property of weighted composition operators.
	\newblock {\em Bull. Korean Math. Soc.}, 48(6):1119--1124, 2011.
	
	\bibitem{RRRS}
	G.~Rottenfusser, J.~R{\"u}ckert, L.~Rempe, and D.~Schleicher.
	\newblock Dynamic rays of bounded-type entire functions.
	\newblock {\em Ann. of Math. (2)}, 173(1):77--125, 2011.
	
	\bibitem{Sha87}
	J.~H. Shapiro.
	\newblock The essential norm of a composition operator.
	\newblock {\em Ann. of Math. (2)}, 125(2):375--404, 1987.
	
	\bibitem{Sha93}
	J.~H. Shapiro.
	\newblock {\em Composition operators and classical function theory}.
	\newblock Universitext: Tracts in Mathematics. Springer-Verlag, New York, 1993.
	
	\bibitem{YR07}
	B.~Yousefi and H.~Rezaei.
	\newblock Hypercyclic property of weighted composition operators.
	\newblock {\em Proc. Amer. Math. Soc.}, 135(10):3263--3271, 2007.
	
\end{thebibliography}

\end{document}